\def\paragraph{\@startsection{paragraph}{4}%
	\z@\z@{-\fontdimen2\font}%
	{\normalfont\bfseries}}
\newcommand{\eps}{\varepsilon}
\newcommand{\C}{\mathbb{C}}
\newcommand{\Disk}{\mathbb{D}}
\newcommand{\N}{\mathbb{N}}
\newcommand{\Q}{\mathbb{Q}}
\newcommand{\R}{\mathbb{R}}
\newcommand*\colvec[3][]{
	\begin{pmatrix}\ifx\relax#1\relax\else#1\\\fi#2\\#3\end{pmatrix}
}
\newcommand*\abs[1]{
	\left|#1\right|
}
\newcommand*\deriv{
	\mathrm{d}
}
\newcommand*\Leb{\
	\mathrm{L}
}
\newcommand*\dist{
	\mathrm{dist}
}
\newcommand*\diam{
	\mathrm{diam}
}
\newcommand\restr[2]{{
		\left.\kern-\nulldelimiterspace 
		#1 
		\littletaller 
		\right|_{#2} 
	}}
	\newcommand{\littletaller}{\mathchoice{\vphantom{\big|}}{}{}{}}
	\newcommand{\norm}[1]{\left\lVert#1\right\rVert}
	\newtheorem{thm}{Theorem}
	\newtheorem*{thm*}{Theorem}
	\newtheorem{lem}{Lemma}[section]
	\newtheorem{coro}{Corollary}[section]
	\newtheorem{prop}{Proposition}
	\theoremstyle{definition}
	\newtheorem*{rem}{Remark}
	\newtheorem*{assumption}{Assumption}
	\newtheorem{defn}{Definition}
	\newtheorem{notation}{Notation}
	\newtheorem{claim}{Claim}
	\renewcommand{\leq}{\leqslant}
	\renewcommand{\geq}{\geqslant}
\newcommand\newlink[2]{{\protect\hyperlink{#1}{#2}}}
	\newcommand*\epsLemProjLocale{
		\newlink{lem:proj-locale}{\eps_\text{proj}}
        	}
	\newcommand*\rhoSection{
		\newlink{def:rho-section}{\rho_0}
        }
	\newcommand*\rLemProjLocale{
		r_0
	}
	\newcommand*\RLemProjLocale{
		R_0
	}
	\newcommand*\lambdaLemProjLocale{
		\lambda
	}
	\newcommand*\CstLemProjGlobale{
		\newlink{lem:proj-globale}{C_0}
	}
	\newcommand*\RLemSautBrownien{
		\newlink{lem:saut-brownien}{R_\text{min}}
	}
	\newcommand*\kappaLemProjLocale{
		\kappa
	}
	\newcommand*\CstALemComparaison{
		C_3
	}
	\newcommand*\CstBLemComparaison{
		C_4
	}
	\newcommand*\epsLemComparaison{
		\eps_1
	}
	\newcommand*\CstDefnA{
		C_5
	}
	\newcommand*\CstLemProbaA{
		C_6
	}
	\newcommand*\CstLemAbsoluteContinuite{
		C_7
	}
	\newcommand*\CstLemRedDelta{
		C_8
	}
	\newcommand*\CstCoroGenericite{
		C_9}
	\newcommand*\cstLemSautBrownien{
		c_0
	}
	\newcommand*\cstLemComparaison{
		c_1
	}
	\newcommand*\cstLemProbaA{
		c_2
	}
	\newcommand*\cstLemAbsoluteContinuite{
		c_3
	}
\newcommand\newtarget[2]{\Hy@raisedlink{\hypertarget{#1}{}}#2}
	\title{Unique ergodicity for singular holomorphic foliations of $\mathbb{P}^3(\C)$ with an invariant plane}
	\author{Félix Lequen}
    \address{Department of Mathematics, Statistics, and Computer Science, University of Illinois Chicago}
\email{lequen@uic.edu}
\begin{document}
		\maketitle
		\begin{abstract}
			We prove a unique ergodicity theorem for singular holomorphic foliations of $\mathbb{P}^3(\C)$ with hyperbolic singularities and with an invariant plane with no foliation cycle, in analogy with a result of Dinh-Sibony concerning unique ergodicity for foliations of $\mathbb{P}^2(\C)$ with an invariant line. The proof is dynamical in nature and adapts the work of Deroin-Kleptsyn to a singular context, using the fundamental integrability estimate of Nguyên.
		\end{abstract}

\section{Introduction}
\subsection{Context and motivation. }Consider two polynomials $P$, $Q$ on $\C^2$ and the associated algebraic ordinary differential equation in the complex domain
\begin{equation}
\frac{\deriv x}{\deriv t} = P(x, y),\;\frac{\deriv y}{\deriv t} = Q(x, y).
				\label{eq2}
				\end{equation}
				The study of the dynamics of the solutions of this equation is a largely open field. In the case of generic $P, Q$, Khuda{\u{\i}}-Verenov \cite{KhudaiVerenov} has shown that the trajectories are dense in $\C^2$. This was extended by Ilyashenko \cite{Ilyashenko} who proved that moreover, generically, these foliations are ergodic and rigid. In higher dimensions, analogous, though weaker, results have been obtained by Loray-Rebelo \cite{LorayRebelo}. 
				
				Another interesting question is the study of the statistical behaviour of solutions, from the point of view of ergodic theory. In this direction, Dinh-Sibony \cite{DinhSibony} have shown that non-stationary solutions of the equation \ref{eq2} spend, in a certain sense, most of their time near infinity — a fact that is perhaps surprising given that the leaves are dense. This is expressed in the following way: for generic equations, solutions of \ref{eq2} are Riemann surfaces which are hyperbolic, that is, they are uniformised by the Poincaré disk $\mathbb{D}$. In particular, using the hyperbolic metric, we can consider a sequence of measures $m_R$, obtained by pushing forward by a uniformisation map the volume measure on the hyperbolic ball $B(0, R)$, for $R > 0$, and multiplying by a certain normalising factor depending on $R$. This is similar to the usual Krylov-Bogoliubov construction in ergodic theory. Then their result implies that the sequence $m_R$ (more precisely, a variant of it with a logarithmic weight) escapes at infinity as $R \to \infty$.
				
				Considering the equation \ref{eq2} in the projective plane $\mathbb{P}^2(\C)$, we can reformulate (generically) the problem as the study of holomorphic vector fields in $\mathbb{P}^2(\C)$ with an invariant line as a unique invariant algebraic curve. More precisely, we will be interesed in the geometric structure associated to the equation, that is, in the holomorphic foliation it defines, given by the partition of the ambient space in trajectories (leaves) and singular points. 
				
				A useful notion to prove the theorem mentioned above, and more generally to study ergodic properties of holomorphic foliations, is that of \emph{harmonic measure} for a foliation, which can very loosely be seen as a weak analogue, for a holomorphic foliation, of an invariant measure for a usual dynamical system (we will see below a way to make this analogy more precise). These are the measures that are invariant by a diffusion process on the leaves of the foliation. Then one is interested in the classification of such measures. Loosely speaking, the theorem of Dinh-Sibony asserts that there is a unique such harmonic measure in the context of equation \ref{eq2}. The focus of this work will be to prove a higher-dimensional analogue of the result of Dinh-Sibony.
				\subsection{Statement and related results} We now give more precise statements, using terminology to be defined precisely in the next section. The theorem of Dinh-Sibony mentioned above can be formulated as follows:
				
				\begin{thm}[Dinh-Sibony \cite{DinhSibony}]
					Let $\mathscr{F}$ be a singular holomorphic foliation by curves on $\mathbb{P}^2(\C)$, seen as a compactification of $\C^2$. Assume that all the singularities are hyperbolic and that the only invariant algebraic curve is the line at infinity $L_\infty$. Then there is a unique positive harmonic current directed by $\mathscr{F}$, up to multiplication by a constant, given by the current of integration on $L_\infty$.
				\end{thm}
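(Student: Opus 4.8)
The plan is to fix a harmonic current $T$ of mass $1$ directed by $\mathscr{F}$ and to prove $T=[L_\infty]$, the existence half being immediate: the $\mathscr{F}$-invariance of $L_\infty$ makes the current of integration $[L_\infty]$ directed and $dd^c$-closed, hence a harmonic current of the required type. The first reduction is to strip off any mass that $T$ carries on $L_\infty$. The part of a directed harmonic current supported on the single irreducible invariant curve $L_\infty\cong\mathbb{P}^1$ is necessarily a multiple of $[L_\infty]$, so if $T$ charges $L_\infty$ one can write $T=c[L_\infty]+(1-c)T'$ with $0\leq c\leq 1$ and $T'$ a directed harmonic current of mass $1$ giving no mass to $L_\infty$; it then suffices to show that such a $T'$ must vanish. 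One arranges along the same lines that $T$ gives no mass to the singular points: since the singularities are hyperbolic, hence locally linearizable, the local structure of directed harmonic currents near such a point is understood, and an atomic part at a singularity can only be fed by an invariant separatrix, which under our hypothesis lies in $L_\infty$; so after the first reduction $T$ charges no singular point either.

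The heart of the matter is then to show that a directed harmonic current $T$ on $\mathbb{P}^2(\C)$ with no mass on $L_\infty$ and none on the singular set must be zero. Here I would pass to the dynamical picture: via the Fornaess--Sibony local decomposition of a directed harmonic current into harmonic functions integrated against transverse measures, $T$ corresponds to a harmonic measure $m$ for the foliated heat diffusion on the leaves equipped with their Poincaré metric, equivalently a stationary measure for the foliated Brownian motion. The fundamental integrability estimate of Nguyên ensures that the ratio of the Poincaré metric to the ambient metric is $m$-integrable, so the foliated Brownian motion and the associated holonomy cocycle have well-defined Lyapunov exponents. The key point — and this is exactly where the adaptation of Deroin--Kleptsyn enters — is to prove that the transverse Lyapunov exponent of the holonomy along Brownian trajectories is strictly negative: the hyperbolic singularities together with the absence of any invariant algebraic curve other than $L_\infty$ (equivalently, no foliation cycle in the affine part) forbid the holonomy from being, on average, non-contracting.

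Granting the strict negativity of the exponent, I would conclude by a backward-dynamics and exponential-contraction argument in the style of Deroin--Kleptsyn: almost every pair of points on nearby leaves is brought exponentially close by the holonomy of long Brownian paths, while every leaf accumulates on $L_\infty$ (a minimality property furnished by the no-invariant-curve hypothesis). Combining the two, the transverse conditional measures of $m$ are transported along the diffusion into arbitrarily small neighbourhoods of $L_\infty$; since $m$ is stationary and the diffusion preserves total mass, all of the mass of $m$ would then have to concentrate on $L_\infty$, contradicting the assumption that $T$ gives $L_\infty$ no mass unless $T=0$. Since $T$ has mass $1$, this is the desired contradiction, and reassembling the reductions yields $T=[L_\infty]$.

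I expect the main obstacle to be precisely the strict negativity of the transverse Lyapunov exponent and the passage from holonomy contraction to concentration of mass on $L_\infty$: both demand delicate control of the foliated Brownian motion near the hyperbolic singularities and near $L_\infty$, which is where Nguyên's integrability estimate is indispensable, and the averaging argument must be organised so that the recurrence of Brownian paths to neighbourhoods of the singular set does not spoil it. By contrast, the two reduction steps are comparatively soft, using only the classification of directed harmonic currents supported on an invariant curve and the linear local model at a hyperbolic singularity.
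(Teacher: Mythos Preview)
This theorem is not proved in the paper: it is quoted as a result of Dinh--Sibony \cite{DinhSibony}, cited for context and motivation, and no proof is supplied. The paper's own contribution is the analogous statement in $\mathbb{P}^3(\C)$ with an invariant plane (theorem \ref{lethm}), and it is \emph{that} result which is proved by the Deroin--Kleptsyn mechanism you describe. So there is nothing in the paper to compare your proposal against for this particular statement.

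That said, two remarks on your sketch. First, the original Dinh--Sibony argument is not dynamical in the Deroin--Kleptsyn sense; it proceeds via potential-theoretic and cohomological estimates on directed harmonic currents, so your outline is a genuinely different route to the same conclusion (closer in spirit to what Deroin--Dupont--Kleptsyn \cite{DDK} carry out in dimension $2$). Second, in the paper's $\mathbb{P}^3$ scheme the negative Lyapunov exponents and the contraction/similarity steps are run for Brownian motion \emph{on the invariant plane $P$}, which carries a genuine hyperbolic foliation with a unique harmonic measure by Fornæss--Sibony; the normal contraction then pulls nearby leaves toward $P$. In the $\mathbb{P}^2$ case the invariant set is a line $L_\infty\cong\mathbb{P}^1$, whose leaves (the line minus the singular points) need not be hyperbolic, and the current $[L_\infty]$ is a foliation cycle rather than a harmonic measure coming from leafwise Brownian motion. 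So the part of your plan where you ``run the diffusion on $L_\infty$ and contract transversally'' does not transpose verbatim: one has to organise the argument so that the Lyapunov exponent and contraction are computed for a hypothetical diffuse harmonic measure in $\C^2$ and then derive a contradiction, rather than starting from a measure on $L_\infty$. Your sketch gestures at this but conflates the two roles.
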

				In this article, we study holomorphic foliations in $\mathbb{P}^3(\C)$ which have an invariant projective plane $P$. To describe the result, let us first quote a similar result of Fornæss-Sibony:
				
\begin{thm}[Fornæss-Sibony]
Let $\mathscr{F}$ be a singular holomorphic foliation by curves on $\mathbb{P}^2(\C)$. Assume that all the singularities are hyperbolic and that there is no invariant algebraic curve. Then there is a unique positive harmonic current directed by $\mathscr{F}$, up to multiplication by a constant.
\label{thm:fornaess-sibony}
\end{thm}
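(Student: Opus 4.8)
The plan is to reduce the statement to the unique ergodicity of a Markov process on the leaves, and then to extract that from a contraction estimate for the transverse holonomy, in the spirit of Deroin-Kleptsyn. First I would note that the positive harmonic currents of mass $1$ directed by $\mathscr{F}$ form a non-empty convex weak-$\ast$ compact set — non-emptiness follows from Garnett's construction of leafwise-harmonic measures on a compact foliated space, or from a Berndtsson-Sibony compactness argument — so that, by the ergodic decomposition, it suffices to prove that there is only one \emph{ergodic} directed harmonic current. Here the hypotheses enter: since all singularities are hyperbolic and there is no invariant algebraic curve, the uniformisation theory of Lins Neto and Brunella shows that every leaf is hyperbolic — a non-hyperbolic leaf would, after taking its closure, carry a directed closed positive current, and in $\mathbb{P}^2(\C)$ that forces an invariant algebraic curve. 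Consequently the leafwise Poincaré metric is well defined, Hölder off the finite singular set, with controlled blow-up near the singular points.

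\emph{Dynamical reformulation.} Let $(D_t)_{t \ge 0}$ be the heat semigroup of the leafwise Poincaré Laplacian and $(\omega_t)_{t \ge 0}$ the associated leafwise Brownian motion. A mass-$1$ directed harmonic current is exactly a stationary probability measure, $D_t^{\ast} T = T$, and ergodicity of $T$ corresponds to ergodicity of this Markov process; moreover no stationary measure charges the singular set, nor a single leaf. So I would recast the theorem as the statement that the leafwise Brownian motion on $\mathbb{P}^2(\C) \setminus \mathrm{Sing}(\mathscr{F})$ is uniquely ergodic.

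\emph{Contraction and conclusion.} I would then lift Brownian paths to the universal cover $\Disk$ of the leaf through the base point; since the leaves are hyperbolic, almost every path converges to a boundary point $\xi \in \partial \Disk$, on which one conditions. The crucial point is that the transverse holonomy along such a path is contracting: its fibered Lyapunov exponent is strictly negative. Granting this, two copies of the process issued from two nearby points on possibly distinct leaves and driven by the same noise — equivalently, conditioned on the same $\xi$ — synchronise exponentially fast almost surely. Disintegrating an arbitrary ergodic stationary measure along a transversal, its transverse conditional measures are then forced to be Dirac, which pins the measure down uniquely; this yields a unique ergodic directed harmonic current, hence, by the reduction above, a unique directed harmonic current of mass $1$.

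\emph{Main obstacle.} The hard part will be the strict negativity of the fibered Lyapunov exponent \emph{in the presence of the singularities}. Near a hyperbolic singular point the Poincaré metric explodes and the Brownian path makes long excursions during which the local holonomy — a germ of the form $w \mapsto w^{\lambda}$ with $\lambda \in \C \setminus \R$ — has an oscillating and possibly large logarithmic derivative, so local contractions cannot simply be summed. Controlling this is precisely the role of the fundamental integrability estimate of Nguyên, which bounds the logarithm of the holonomy derivative in terms of the time the path spends near $\mathrm{Sing}(\mathscr{F})$ and shows that the genuine leafwise-hyperbolic contraction still dominates on average. Combined with recurrence of the diffusion, this is what makes the synchronisation argument go through; the remaining ingredients — compactness, ergodic decomposition, and the potential theory of the heat semigroup — are soft or standard.
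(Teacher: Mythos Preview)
The paper does \emph{not} prove this theorem. Theorem~\ref{thm:fornaess-sibony} is quoted as a known result of Forn\ae ss--Sibony and is used as a black-box input to the paper's main theorem (it provides the unique harmonic current on the invariant plane $P$, which serves as the ``target'' for the contraction/similarity argument in $\mathbb{P}^3(\C)$). There is therefore no proof in the paper to compare your attempt against.

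For context, the original Forn\ae ss--Sibony argument is of a completely different nature from what you sketch: it is pluripotential-theoretic, based on an energy functional for directed harmonic currents and a geometric intersection/self-intersection computation that forces any two normalised harmonic currents to coincide. No Brownian motion, holonomy contraction, or Lyapunov exponents appear.

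What you have written is essentially the Deroin--Kleptsyn strategy, which is precisely the method this paper deploys for its \emph{own} result in $\mathbb{P}^3(\C)$ --- and which Deroin--Dupont--Kleptsyn (reference \cite{DDK} in the paper) adapt to the singular $\mathbb{P}^2(\C)$ setting as an alternative route to Forn\ae ss--Sibony. So your outline is a legitimate alternative approach rather than a reconstruction of the original. Two remarks on the sketch itself. First, the conclusion ``transverse conditional measures are forced to be Dirac, which pins the measure down uniquely'' is not how the argument closes, and as stated it is wrong: Dirac transverse conditionals would mean the measure is carried by a single leaf, which is excluded. The actual endgame is that contraction plus similarity force $W_p$-almost every Brownian path, for \emph{every} regular $p$, to be equidistributed with respect to any given ergodic harmonic measure; two ergodic measures then coincide. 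Second, be aware of the logical dependence: in this paper the Forn\ae ss--Sibony theorem is an \emph{input} (it supplies the reference measure $\nu$ on $P$), so you cannot invoke the paper's machinery to reprove it without circularity --- you would have to run the full Deroin--Kleptsyn scheme intrinsically on $\mathbb{P}^2(\C)$, as in \cite{DDK}.
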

Let us also mention its generalisation to compact Kähler surfaces:
\begin{thm}[Dinh-Nguyên-Sibony \cite{DNSKahler}]
Consider a compact Kähler surface $S$ and a singular holomorphic foliation $\mathscr{F}$ with only hyperbolic singularities, and no foliation cycle directed by it. Then there exists a unique harmonic current directed by $\mathscr{F}$, up to multiplication by a constant.
				\end{thm}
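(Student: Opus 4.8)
The plan is to follow the dynamical strategy sketched in the introduction: reduce the classification of directed harmonic currents to the classification of stationary measures for the leafwise heat diffusion, and then run a Deroin-Kleptsyn type contraction argument, with the singular locus controlled by the integrability estimate of Nguyên.

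\textbf{Existence and reduction.} First I would recall that a singular holomorphic foliation on a compact Kähler surface always carries a directed positive harmonic current of mass $1$; this comes from a Hahn-Banach / weak-$*$ compactness argument applied to the non-empty cone of directed currents and uses nothing about the singularities or about foliation cycles. The set of such currents is convex and weak-$*$ compact, so it suffices to show that its extreme points — the ergodic directed harmonic currents — reduce to a single point. Next, since the singularities are hyperbolic, hence linearizable by Poincaré-Dulac, one puts $\mathscr{F}$ near each singular point in the normal form $x\,\deriv y = \lambda\, y\,\deriv x$ with $\lambda \in \C \setminus \R$; away from the singular set $E$ the leaves are hyperbolic Riemann surfaces and carry a leafwise Poincaré metric $g_P$. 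The analytic input is the estimate of Nguyên: the density $\eta$ of $g_P$ with respect to the ambient Kähler form has only mild (essentially logarithmic) singularities along $E$ and the separatrices, and is integrable against every directed harmonic current $T$. This lets one attach to each such $T$ a \emph{finite} measure $m_T$, morally $T \wedge g_P$ after normalization, which is a harmonic measure for the foliated Laplacian of $g_P$, hence a stationary measure for the leafwise heat semigroup $\{D_t\}_{t > 0}$, and conversely every stationary measure arises this way. Uniqueness of $T$ up to a multiplicative constant is thus equivalent to uniqueness of the stationary probability measure of $\{D_t\}$.

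\textbf{The contraction argument.} The heart of the matter is to show that $\{D_t\}$ has a unique stationary probability measure, and here I would adapt the Deroin-Kleptsyn scheme. Sample a leafwise Brownian path $t \mapsto \gamma(t)$ and follow the transverse holonomy $\mathrm{hol}_{\gamma,t}$ along it. Because $\mathscr{F}$ carries no foliation cycle — equivalently, no directed closed current, which is the holomorphic counterpart of a finite transverse invariant measure — the diffusion cannot concentrate on such an object, and one shows that the transverse Lyapunov exponent $\lambda^{-} = \lim_{t \to \infty} \frac{1}{t}\log\|\mathrm{hol}'_{\gamma,t}\|$ is strictly negative for almost every path, with respect to any ergodic stationary measure. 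Negativity of $\lambda^{-}$ yields a local contraction: for $m$-almost every point, two Brownian paths started on nearby leaves become exponentially close, so the disintegration of a stationary measure along a transversal is rigid. Combined with recurrence of the diffusion on the regular part $X \setminus E$ (again a consequence of the absence of foliation cycles), this forces any two ergodic stationary measures to share their generic trajectories, hence to coincide; so $m_T$, and then $T$ itself, is unique up to scale.

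\textbf{Main obstacle.} The delicate point is the behaviour near the hyperbolic singularities: there $g_P$ blows up, the heat kernel degenerates, and a priori the Brownian motion could be absorbed at a singular point or spend an infinite amount of Poincaré time near one, which would destroy both the finiteness of $m_T$ and the recurrence used above. This is precisely where the fundamental integrability estimate of Nguyên is indispensable: it quantifies the growth of $\eta$ near $E$ and shows that $E$ is polar for the diffusion, so that with full probability the path neither hits nor lingers at the singularities. Making the Lyapunov exponent computation and the contraction estimates \emph{uniform up to $E$}, rather than only on compact subsets of the regular part, is the technical core of the argument; once that is in place, the Deroin-Kleptsyn machinery carries over almost verbatim.
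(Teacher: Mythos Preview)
This theorem is \emph{quoted} in the paper as a result of Dinh--Nguy\^en--Sibony \cite{DNSKahler}; it appears in the introduction alongside the Forn\ae ss--Sibony and Dinh--Sibony theorems purely as background for the main result on $\mathbb{P}^3(\C)$. The paper supplies no proof of it, so there is nothing to compare your proposal against.

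For what it is worth, the actual proof in \cite{DNSKahler} is \emph{not} dynamical: it goes through an energy/intersection theory for directed harmonic currents on K\"ahler surfaces, showing that the difference of two normalised harmonic currents has zero energy and must vanish. Your sketch instead runs the Deroin--Kleptsyn Brownian-motion scheme, which is the strategy the present paper uses for its own theorem~\ref{lethm}. As a standalone argument for the quoted statement it has real gaps: you assert that the absence of a foliation cycle forces the transverse Lyapunov exponent to be strictly negative, but this is a theorem in its own right --- in this paper negativity comes from an explicit Chern-class computation specific to $\mathbb{P}^2(\C)$ (proposition~\ref{prop:lypaunov-negatif}), and on a general K\"ahler surface one needs the arguments of \cite{DeroinLevi}, \cite{nguyen2018singular} or \cite{DDK}. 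And the passage from infinitesimal contraction to a unique stationary measure is not the one-line ``disintegration is rigid'' you give; it is the full contraction/similarity machinery of sections~\ref{sect:contraction}--\ref{sect:similarite}, whose singular version is precisely the technical content of the paper.
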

				In this article, we prove the following theorem:
				\begin{thm}
					Let $\mathscr{F}$ be a singular holomorphic foliation by curves on $\mathbb{P}^3(\C)$, seen as a compactification of $\C^3$. Assume that there is only a finite number of singularities and that they are all hyperbolic, that the plane at infinity $P$ is invariant by $\mathscr{F}$ and that there are no algebraic curves invariant by $\mathscr{F}$. Then there is a unique positive harmonic current directed by $\mathscr{F}$, up to multiplication by a constant, and it is the current given by the theorem of Fornæss-Sibony \ref{thm:fornaess-sibony} applied to $P$ and $\restr{\mathscr{F}}{P}$, extended by zero.
					\label{lethm}
				\end{thm}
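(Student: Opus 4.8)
For existence one only has to check that the Fornæss--Sibony current on $(P,\restr{\mathscr{F}}{P})$ — which exists by Theorem~\ref{thm:fornaess-sibony}, once we verify below that $(P,\restr{\mathscr{F}}{P})$ meets its hypotheses — becomes, when extended by zero, a harmonic current of mass $1$ directed by $\mathscr{F}$ on $\mathbb{P}^3(\C)$; this is a local flow-box computation using that $P$ is invariant. The content is uniqueness. So let $T$ be an arbitrary harmonic current of mass $1$ directed by $\mathscr{F}$ and decompose it along the invariant plane, $T = \mathbf{1}_P T + \mathbf{1}_{\C^3} T$. The plan is to prove (i) $\mathbf{1}_{\C^3}T = 0$; and (ii) $\mathbf{1}_P T$, regarded intrinsically on $P$, is a harmonic current directed by $\restr{\mathscr{F}}{P}$ (the verifications that both pieces are again harmonic, and that a flow-box for $\mathscr{F}$ near a point of $P$ restricts to one for $\restr{\mathscr{F}}{P}$, are local and routine, using invariance of $P$). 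Granting these, $\mathbf{1}_P T$ has mass $1$; moreover $\restr{\mathscr{F}}{P}$ has finitely many singularities — exactly $\mathrm{Sing}(\mathscr{F})\cap P$ — all hyperbolic (the part of a hyperbolic linearisation tangent to $P$ is again hyperbolic), and $P$ carries no invariant algebraic curve, since such a curve would give a non-zero foliation cycle directed by $\restr{\mathscr{F}}{P}$, contrary to hypothesis; so Theorem~\ref{thm:fornaess-sibony} identifies $\mathbf{1}_P T$ with the asserted current, and the crux is step (i).

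\textbf{Vanishing of the affine part: the dynamical core.} Here I would follow the Deroin--Kleptsyn circle of ideas, in the singular form made possible by the integrability estimate of Nguyên. By that estimate the leafwise Poincaré metric $g_{\mathscr{F}}$ is integrable against $T$, so $\mu := T\wedge g_{\mathscr{F}}$ is a finite positive measure which is harmonic in Garnett's sense, i.e.\ $\int T\wedge i\partial\bar\partial\psi = 0$ for every bounded leafwise-smooth $\psi$ (equivalently, $\mu$ is stationary for the leafwise heat diffusion), this characterisation being valid in the singular setting precisely thanks to Nguyên's estimate; put $\mu_1 := \mathbf{1}_{\C^3}\mu = (\mathbf{1}_{\C^3}T)\wedge g_{\mathscr{F}}$, so that $\mu_1(P)=0$. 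Let $\phi := \log(1+\|x\|^2)$ on the affine chart $\C^3$, a global potential of the Fubini--Study form $\omega$, so that on every leaf $L\subset\C^3$ one has $i\partial\bar\partial(\restr{\phi}{L}) = \restr{\omega}{L} > 0$: the function $\phi$ is \emph{strictly} subharmonic along the leaves, hence a strict submartingale along the leafwise Brownian motion in $\C^3$. Morally, a stationary measure cannot survive where a strict submartingale is driven upward, so the mass of $\mu_1$ must leak towards $P$, where $\phi$ is unbounded; since $\mu_1(P)=0$ this forces $\mu_1=0$, i.e.\ $\mathbf{1}_{\C^3}T=0$. Quantitatively, approximate $\phi$ by the decreasing family $\phi_n := n + \chi(\phi-n)$ of bounded leafwise-smooth functions, with $\chi$ increasing, concave, bounded and equal to the identity on $(-\infty,0]$; applying the Garnett identity to $\phi_n$ gives $0 = \int(\mathbf{1}_{\C^3}T)\wedge i\partial\bar\partial(\restr{\phi_n}{L})$, which equals $\int_{\C^3}\mathbf{1}_{\C^3}T\wedge\omega$ plus a boundary term supported in $\{\phi\ge n\}$ and built from $\chi'$ and $\chi''$; letting $n\to\infty$ and proving that this boundary term tends to $0$ yields $\int_{\C^3}\mathbf{1}_{\C^3}T\wedge\omega = 0$, hence $\mathbf{1}_{\C^3}T=0$.

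\textbf{Main obstacle.} The delicate step is exactly this passage to the limit: controlling the mass of $T$ — and the degeneration of $g_{\mathscr{F}}$ — in a neighbourhood of $P$, and above all near the hyperbolic singular points lying on $P$, where $g_{\mathscr{F}}$ degenerates while $\restr{\omega}{L}$ and $i\partial\phi\wedge\bar\partial\phi$ blow up, so that the boundary term above is a priori hard to estimate. This is precisely the role of Nguyên's fundamental integrability estimate: it supplies the quantitative control of $T\wedge g_{\mathscr{F}}$ (and of the auxiliary masses $T\wedge\omega$ and $T\wedge i\partial\phi\wedge\bar\partial\phi$) near the singularities that legitimises both the construction of the harmonic measure $\mu$ and the vanishing of the boundary term. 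Transporting the Deroin--Kleptsyn machinery — leafwise heat diffusion, stationary measures, the harmonic-measure/harmonic-current dictionary and the attendant recurrence arguments — from the compact smooth setting to this singular, non-compact one is the technical heart of the proof; once it is in place, the reduction to Fornæss--Sibony on $P$ is comparatively soft.
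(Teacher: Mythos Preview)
Your approach is fundamentally different from the paper's, and the sketch contains a genuine gap.

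The paper never attempts to kill the affine piece $T_1:=\mathbf{1}_{\C^3}T$ by a potential-theoretic/submartingale argument. It proves instead the stronger dynamical statement that for every $q\in M\setminus S$ a $W_q$-generic leafwise Brownian path is asymptotically distributed according to the unique harmonic measure $\nu$ on $P$. This is obtained in three steps---negativity of the two transverse Lyapunov exponents on $P$ (Proposition~\ref{prop:lypaunov-negatif}), exponential contraction of transverse sections along typical Brownian paths in $P$ (Proposition~\ref{prop:contraction}), and the similarity lemma (Lemma~\ref{lem:similarite})---followed by a tailness/recurrence argument using that every leaf accumulates on a compact set in $P\setminus S$ (Lemma~\ref{lem:accumulation}). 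The negativity of the Lyapunov exponent \emph{normal to $P$}, computed from $N^{*}_{P/M}\simeq\mathscr{O}(-1)$, is the mechanism by which $P$ is transversally attracting; this is where the hypotheses on $\mathbb{P}^3(\C)$ and on the invariant plane enter essentially. Although you invoke the ``Deroin--Kleptsyn circle of ideas'', what you actually sketch---the $\phi=\log(1+\|x\|^2)$ argument---uses none of the Lyapunov/contraction/similarity machinery that constitutes that method.

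The step you flag as the ``main obstacle'', namely the vanishing of the boundary term $\int\chi''(\phi-n)\,T_1\wedge i\partial\phi\wedge\bar\partial\phi$, is not handled by Nguy\^en's estimate and in general is simply false. Nguy\^en's theorem, as stated and used in the paper, gives integrability of $\ell=-\log\dist(\cdot,S)$ against a harmonic measure on a \emph{surface}; it says nothing about the mass of $T_1$ near the invariant hypersurface $P$ away from $S$, which is where your boundary term is supported. In fact the vanishing of that boundary term is equivalent to your conclusion $\int T_1\wedge\omega=0$, so you are assuming what is to be proved. Concretely: take a foliation of $\mathbb{P}^2(\C)$ with $L_\infty$ invariant and with another invariant line $L_1\subset\C^2$; then $T_1=[L_1]$ is a directed harmonic current with no mass on $L_\infty$ and $\int T_1\wedge\omega>0$. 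Your identity $0=\int T_1\wedge i\partial\bar\partial\phi_n$ still holds, hence the boundary term converges to $-\int_{L_1}\omega\neq 0$. Nothing in your outline distinguishes this situation from the one at hand---you use the hypotheses (hyperbolic singularities, no foliation cycle on $P$) only through Nguy\^en and through Forn\ae ss--Sibony on $P$, neither of which sees the transverse dynamics of $\mathscr{F}$ near $P$. The missing ingredient is precisely the negative transverse Lyapunov exponent, and with it the contraction/similarity package, which is the actual content of the paper's proof.
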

				Note that the absence of foliation cycles directed by $\mathscr{F}$ implies that there are no invariant algebraic curves, by considering the current of integration on them. In dimension $2$, with hyperbolic singularities, the converse is also true, by the work of Camacho-Sad-Lins Neto \cite{CSLN}. For $\mathbb{P}^3(\C)$ and hyperbolic singularities, it is a consequence of the work of Brunella \cite{Brunella}. The condition on the foliation is therefore generic, see Lins Neto-Soares \cite[section 4]{LinsNetoSoares} and Coutinho \cite[theorem 4.2]{Coutinho} for the condition on invariant algebraic curves.

                \begin{rem}
                    It would possible to prove a similar statement for more general complex projective manifolds of dimension $3$ with a hypersurface $P$ such that $[P]$ is an ample divisor. However for simplicity we stick to the case of $\mathbb{P}^3(\C)$ with a hyperplane.
                \end{rem}

				Let $p \in \mathbb{P}^3(\C)$ be a regular point and let $\omega_{\mathrm{FS}}$ be the Fubini-Study metric. Let $L_p$ be the leaf of $p$, which, as we will see, is a hyperbolic Riemann surface, covered by the Poincaré disk $\Disk$. Let $\phi \colon \Disk \to L_p$ be a universal covering map such that $\phi(0) = p$ (unique up to composition by a rotation). For every $r \in (0,1)$, define
				\[\tau_r^p := \int_0^r\frac{\deriv t}{t}\int_{\Disk(0,t)}\phi^{*}\omega_{\mathrm{FS}},\]and
				\begin{equation}T_r^p(\omega) := \frac{1}{\tau_r}\int_0^r\frac{\deriv t}{t}\int_{\Disk(0,t)} \phi^{*}\omega.
				\label{eq:courant-moyenne}\end{equation}
				These definitions are independent of the choice of universal covering. Then we have
				\begin{thm}[Geometric Birkhoff theorem]
					The sequence $(T_r^p)_r$ converges to the current of theorem \ref{lethm} appropriately normalised, as $r \to 1$.
				\end{thm} Indeed, by a theorem of Fornæss-Sibony \cite[theorem 21]{FornaessSibonySurvey}, we have $\tau_r^p\xrightarrow[r\to 1]{} \infty$. By an elementary computation, we have
				\begin{align*}\abs{T_r^p\left(i\partial\overline{\partial}f\right)} 
				&=  \frac{1}{\tau_r^p}\abs{\int_0^{2\pi}f(\phi(re^{i\theta}))\deriv\theta - f(p)}\\
				&\leq \frac{2\norm{f}_\infty}{\tau_r^p},
				\end{align*}
				and so every limit point of $(T_r^p)$ is a harmonic current, which is positive and directed by $\mathscr{F}$. In fact, we will see also that we have the same statement (proposition \ref{prop:moyenne-poincare}) if we replace in equation \ref{eq:courant-moyenne} the normalising factor $\tau_r^p$ by
				\[\tilde{\tau}_r^p := \int_0^r\frac{\deriv t}{t}\int_{\Disk(0,t)}\phi^{*}\mathrm{vol}_{P},\]
				where $\mathrm{vol}_P$ is the volume form for the Poincaré metric on the leaves, changing however $T$ by a constant factor.

				\subsection{Outline of proof}
				The proof of theorem \ref{lethm} follows the approach of Deroin-Kleptsyn \cite{deroinkleptsyn}, which covers conformal and non-singular foliations. The difficulty in adapting their approach is mostly due to the singularities, with the non-conformality easier to handle.
				
				The main tool of the proof is to introduce a one-dimensional dynamical systems by studying Brownian motion on the leaves of the foliation. We will study the transverse dynamics along a Brownian path. In particular: do close leaves converge exponentially quickly along a typical Brownian path? We will work in a context where this is the case, and this contracting dynamics will be the key to the whole argument. The steps of the proof can be described as follows:
				
				\begin{itemize}
					\item The first step is to prove \emph{negativity of the Lyapunov exponents} for a path in $P$. That is, we need to prove that at the infinitesimal level, we have a contracting dynamics for a typical Brownian path in the plane $P$. First, there is a fundamental difficulty concerning even the existence of such Lyapunov exponents, which was resolved by the difficult work of Nguyên \cite{Nguyen}. Then, we use an observation of Candel \cite{CandelGarnett} and Deroin \cite{DeroinLevi} which relates the \emph{infinitesimal} behaviour of a typical Brownian path to curvature properties of normal bundles of leaves. This is perhaps not so surprising in that the curvature property of a normal bundle is related to the Laplacian of the logarithm of the norm of a section of the normal bundle, while Brownian motion is intimately related to the Laplacian. 
					\item The second step is to prove \emph{contraction}, that is, to improve the infinitesimal information of the first step to a local information: we need to show that a typical Brownian path in $P$ does not simply have exponential decay of its derivatives, but contracts a small transverse section exponentially. This is proved by discretising Brownian paths and using a simple contraction lemma concerning the iteration of maps with negative Lyapunov exponents.
					\item The third and most involved step is called \emph{similarity}, following Deroin-Kleptsyn. Given a path in $P$, starting from a regular point $p$, which is contracting in the sense of the preceding paragraph, we deduce information about a typical path starting from $q$, where $q$ is close enough to $p$. The idea is quite intuitive: Brownian motion behaves similarly from one leaf to another close one. But the technical realisation of this idea is quite involved. Here again we have to discretise Brownian paths, because the small scale behaviour of Brownian motion needs to be disregarded as there is no way it could be comparable in close leaves in general. Then we can reduce to comparing heat kernels on close leaves, and proving that certain products of heat kernels are close. See also the beginning of section \ref{sect:similarite} for a more detailed outline of this step.
				\end{itemize}
				Once these steps have been implemented, the end of the argument is not too difficult, and relies on the fact that we have tail events, essentially. By the result of Fornæss-Sibony \cite{FornaessSibony}, we know that a path in $P$ is distributed with respect to a certain harmonic measure. Then by the similarity, a path starting close to $P$ also has this property. Then a simple recurrence statement allows to obtain the result for every starting point, as the initial segment of the trajectory does not affect the distribution. Note that in everything above, our goal being \emph{unique} ergodicity rather than just ergodicity, it is crucial to handle \emph{all} starting points, not just almost all starting points.
				
				Here we have summarised essentially the work of Deroin-Kleptsyn, without explaining how the strategy is affected by the singularities, which, as we have said, are the essential difficulty. We describe the general idea now in a simple model case, that of a linear hyperbolic singularity. We will work in dimension $2$, as the phenomenon is already apparent in this context, but later on we will have to work in a more general context. So let us consider, for $\alpha, \beta \in \C^{*}$ with $\beta/\alpha \notin \R$, the vector field defined by
				\[X(x, y) = \alpha x \partial_x + \beta y \partial_y.\]
				for $(x, y) \in \Disk^2$, where $\Disk := \{z\in\C\,:\,\abs{z}<1\}$. We assume moreover that the imaginary part $\lambda$ of $-\beta/\alpha$ is positive, say. The flow of this vector field is given by $\phi^{\zeta}(x, y) = (xe^{\alpha\zeta}, ye^{\beta\zeta})$. We will consider the following leaf, which is simply given by $L = \{(x,0)\,:\,\abs{x}<1, x \neq 0\}$; note that it is a punctured disk. As we have said, we have a metric on our leaves, and we will be working with the hyperbolic or Poincaré metric, which in this case is
				\[\frac{\abs{\deriv z}}{\abs{z}\abs{\log\abs{z}}}.\]
				It might be possible to work with other metrics, but this one is quite natural because it comes naturally from the universal covering space of the leaf.
				
				Now take some $r \in(0,1)$, $x \in \C$ with $\abs{x} = r$. Consider $p = (x, 0)$. Consider the path $\gamma(t) = \left(e^{2i\pi t}x, 0\right)$ for $t\in\R$. Consider $T = \{(x,y)\,:\,\abs{y}<1\}$ a transverse section to $L$ at $p$. For any integer $N \geq 1$, the holonomy along the path $\gamma(t)$ with $0 \leq t \leq N$ is given by \[(x, y) \mapsto \left(x, \exp\left(2i\pi\frac{\beta}{\alpha}N\right)y\right).\]
				The hyperbolic length of the path is $2\pi N/\log(1/r)$, while the holonomy has a derivative at $0$ of modulus $\exp\left(2\pi N\lambda\right)$. So taking $r = e^{-N}$, we see that we can have paths of fixed length whose holonomy explodes, as we get closer to the singularity. The situation can even become potentially worse when considering paths which are not at a fixed distance from the singularity, as $\gamma$ was. What this computation does suggest, though, is that by taking paths of length about $1/\log(1/r)$, where $r$ is the distance to the singularity, we might still have control on the holonomy. We will see that this is the case in general (see lemma \ref{lem:controle-hol}). 
				
				Therefore, when we discretise Brownian paths, we must have smaller steps of discretisation when we approach the singularity. For this approach to work, we need to make sure that we do not spend too much time near the singularity, or at least that we do not approach the singularity too quickly, and this is exactly what the already mentioned fundamental work of Nguyên \cite{Nguyen} (see theorem \ref{thm:nguyen}) provides, by way of the ergodic theorem. This idea is also present in the work of Deroin-Dupont-Kleptsyn \cite{DDK}.
				\subsection{Organisation of the proof}
				In section \ref{sect:background}, we introduce the objects that we will use in the rest of the article, as well as some background information. Section \ref{sect:geometrie} focuses on some technical geometric constructions, in particular the Poincaré-type metric that we will use on leaves, projections of close leaves, necessary to compare the heat kernel, and control of the holonomy. Section \ref{sect:negativite} proves that the Lyapunov exponents are negative, then section \ref{sect:contraction} proves the contraction, and finally in section \ref{sect:similarite} we do the similarity step and in section \ref{sect:conclusion}, we give the recurrence argument that allows to conclude the proof of theorem \ref{lethm}. We also give some slight improvements of the results obtained.
			\subsection*{Acknowledgement}This work is the main part of my PhD thesis, written under the direction of Bertrand Deroin, to whom I am greatly indebted for his constant advice and support. I have also benefitted greatly from discussions with Christophe Dupont, and Victor Kleptsyn has helped me several times with different aspects of this problem. I also thank Jorge Vitório Pereira for giving me a useful reference on genericity, and Yann Chaubet for some useful explanations on PDEs.
				\section{Background}
				\label{sect:background}
				For all this section, we mostly follow Nguyên \cite{Nguyen, nguyen2018singular, NguyenSurvey}.
				\subsection{Notations}
				We write, for $r > 0$, $\Disk(r) := \{z \in \C\,:\,\abs{z}<r\}$.
				In all this article, we take $M$ a compact complex manifold of dimension $3$, and $P$ a surface in $M$; in practice we will take $M = \mathbb{P}^3(\C)$ and $P$ a projective plane in $\mathbb{P}^3(\C)$.
				
				It will be convenient to use the following $\log$-type function: let $\ell \colon (0, \infty) \to [1, \infty)$ be a smooth decreasing map such that for every $s \geq 0$, $\ell(s) \geq -\log s$ and for every $s \leq 1/3$, $\ell(s) = -\log s$. 
				We denote by $\deriv f_p$ or $\deriv_p f$ the differential of a map $f$ at a point $p$. For $T \geq 0$, we denote by $\mathrm{Leb}_{[0,T]}$ the restriction of the Lebesgue measure to $[0,T]$. We denote by $\norm{\cdot}_{\mathrm{C}^r}$ a $\mathrm{C}^r$-norm, with respect to an implicit choice of atlas. For a metric $g$, we denote by $\mathrm{vol}_g$ its volume form. We also write $\mathrm{diam}$ for the diameter of a set, and $\mathrm{dist}$ for the distance, when the metric used is clear from context (in the following it will be respective to an ambient metric on $M$ to be introduced later). Sometimes we write as an index the metric which is used.
				
				We will use the letters $C$, $C'$, $c$, etc. for positive constants which can change from line to line and are always allowed to depend on the foliation. To insist on this, and for clarity, we will often, though not always, write that they only depend "on the geometry of the foliation". We will also use the notation $A \lesssim B$ to mean that $A \leq CB$ for some constant $C > 0$ depending only on the geometry of the foliation, and similarly $A \gtrsim B$ to mean $B \lesssim A$ and $A \asymp B$ to mean both $A \lesssim B$ and $B\lesssim A$. Similarly the Landau notation will sometimes be used with constants that can depend on the geometry of the foliation. Sometimes we will also allow these implicit constants to depend on other parameters, which will be indicated by putting them as indices, e.g. $C_\eps$ for a constant which can depend on a parameter $\eps$.
				
				\subsection{Holomorphic foliations by curves} We consider on $M$ a (possibly singular) \emph{holomorphic foliation} by curves $\mathscr{F}$ with finite singular set $S$: this is the data of a covering of $M$ by open sets $\{U_i\}$, with a holomorphic vector field $X_i$ on each $U_i$ which has a finite number of zeroes, and such that for every $i, j$ with $U_i\cap U_j \neq \varnothing$, $X_i$ and $X_j$ differ by multiplication by a non-vanishing holomorphic function. The \emph{singular set} $S$ is the set of zeroes of the $X_i$; elements of $S$ are called \emph{singularities}. 
				
				\begin{notation}For any $p \in M \setminus S$, we write $\ell(p) := \ell(\dist(p,S))$. 
				\end{notation}
				
				It follows from the standard theory of analytic differential equations, see e.g. \cite{Ilyashenko,Abate} that $M \setminus S$ can be covered by a \emph{foliated atlas}, that is a cover of $M\setminus S$ by charts, called \emph{foliated charts}, $\{U_i\}_{i\in I}$ where for each $i \in I$, $U_i \simeq D_i \times T_i$, with $D_i$ a disc of $\C$ and $T_i$ a bi-disc of $\C^2$, and the change of coordinates between $(z, t) \in D_i \times T_i$ and $(w, s) \in U_j \simeq D_j \times T_j$ for some other such chart, is of the form:
				\begin{align*}
				w = w(z, t),\\
				s = s(t).
				\end{align*}
				
				Given a foliated chart as above, the sets defined by $t = \text{constant}$ are invariant by change of atlas. In a given foliated chart, a set of the form $t = \text{constant}$ is called a \emph{plaque}. A \emph{leaf} is a connected subset such that whenever it meets a plaque, it contains it entirely. Note that we use the convention that singularities are not leaves.
				
				For every \emph{regular} point $p \in M$, that is, $p \in M \setminus S$, denote by $L_p$ the leaf passing through $p$; and consider its tangent line. This defines a holomorphic line bundle $T_{\mathscr{F}}$ on $M\setminus S$; by Hartogs' phenomenon, it extends to a holomorphic line bundle on $M$. Note that $T_\mathscr{F}$ is a sub-bundle of the holomorphic tangent bundle $TM$ of $M$, and denote by $N_{\mathscr{F}} := TM/T_{\mathscr{F}}$ the \emph{normal bundle} of $\mathscr{F}$. 
				
				In all the article, we will use an ambient hermitian metric on $M$, denoted by $g_M$. We will later impose some additional hypotheses on $g_M$; but these are not very important, as the ambient metric plays not essential role. In particular, we will assume that we can choose charts such that near the singularities, the metric $g_M$ coincides with the standard hermitian metric on $\C^3$. Note that this metric induces hermitian metrics on $T_{\mathscr{F}}$, $N_{\mathscr{F}}$, etc. We will denote by $\dist$ the distance induced by this metric.
				
				For any $p \in P\setminus S$, we consider $S(p)$ the image of $\left(N_{\mathscr{F}}\right)_p \simeq \left(T{\mathscr{F}}\right)_p^{\perp}$ by the exponential map. For any $r > 0$, we also denote by $S(p,r)$ the set $\{\exp_p(w) \,:\,w \in \left(N_{\mathscr{F}}\right)_p, \abs{w} < r\dist(p,S)\}$. There is a constant $\newtarget{def:rho-section}{\rhoSection} > 0$ such that for $r < \rhoSection$ this is a transverse section to the foliation, and the distance induced on it is equal to the ambient distance on $M$.
				
				We make the following assumption in the rest of the article:
				\begin{assumption}The surface $P$ is invariant by $\mathscr{F}$, that is, each leaf intersecting $P$ is contained in $P$.
				\end{assumption}
				
				\subsection{Holonomy}The holonomy is a formalisation of the concept of Poincaré section. It allows to study the transverse dynamics of a foliation,  
				see Candel-Conlon \cite{CandelFoliationsI} for a precise definition. Given a path $\gamma \colon [0, 1] \to L$ between two points $p$ and $p'$ in a leaf $L$, we can consider two transverse sections $T$ and $T'$ to $L$ at $p$ and $p'$, respectively. The holonomy is obtained by sliding in plaques points of $T$ to $T'$.
				
				More precisely, assume first that there is a foliated chart $U \simeq D_i \times T_i$ such that $p, p' \in U$. Then, if $q \in T$ is close enough to $p$, the plaque through $q$ intersects $T'$ in a unique point, which is by definition the image of $q$ by the holonomy between $p$ and $p'$. For a general path $\gamma$ in a leaf, we can cut it into several pieces which are included in a foliation chart, choose intermediate transverse section, and compose the holonomy from all the different pieces. This gives the holonomy along $\gamma$, which we denote by $h_\gamma$. The germ of the holonomy map depends only on the homotopy class of the path with fixed endpoints.
				
				We will use variant of this notation: e.g. on a simply connected leaf, we will sometimes write $h_{pq}$ for the holonomy map along the geodesic between $p$ and $q$. If $\gamma$ is parametrised by an interval $I$ and $s, t \in \R$ such that $[s,t] \subset I$, we will also write $h_\gamma^{s,t}$ for the holonomy along the path $\restr{\gamma}{[s,t]}$. Finally, in the following, the transverse sections that we use will always be the $S(p,r)$ introduced above unless stated otherwise (in lemma \ref{lem:changer-transversale}).
				
				\subsection{Local model of hyperbolic singularities}\label{sect:local} Consider a singular point $p_0\in S$. We identify a small enough neighbourhood of $p$ with a small bidisc in $\C^2$ and $p_0$ with $0$. Consider a holomorphic vector field $X$ defining the foliation as above. We denote by $\{\phi^{\zeta}\}$ the local flow of $X$, so for $\abs{\zeta} \lesssim 1$ and $p$ close enough to $0$, we have $\phi^0(p) = p$ and \[\frac{\deriv \phi^\zeta}{\deriv\zeta}(p) = X(\phi^\zeta(p)).\] Call $p$ \emph{hyperbolic} if the eigenvalues of $\deriv X_p$ are all non-zero and no two of them are co-linear over $\R$ (this is an intrinsic property of the foliation defined by $X$). We will often reduce slightly the neighbourhoods so as to have estimates on the flow and its derivatives, with constants depending only on the geometry of the foliation.
				
				Call $p$ \emph{linearisable} if we can identify holomorphically a neighbourhood of $p$ with a neighbourhood of $0 \in \C^3$ such that $p$ is sent to $0$ and the vector field $X$ is linear, that is, of the form
				\[X(x, y ,z) = \alpha x \partial_x + \beta y \partial_y + \gamma z \partial z,\]
				for some complex numbers $\alpha, \beta, \gamma \in \C$.
				In general, this is not possible in dimension $3$, even in the hyperbolic case, see e.g. \cite{Ilyashenko}. In the hyperbolic case, however, it is always possible in dimension $2$, and we will freely use this fact. Note that the vector field $X$ preserves the plane $P$. Therefore, if $p \in P$, and in coordinates $(x, y, z)$ such that $P$ is given locally by $\{z= 0\}$, $X$ restricted to $P$ is given by
				\[X(x, y, 0) = \alpha x \partial_x + \beta y \partial_y.\]
				Then the flow $\phi^\zeta$ of $X$ is given by
				\[\phi^\zeta(x, y, 0) = (xe^{\alpha\zeta}, ye^{\beta\zeta}, 0),\]
				for every $\zeta \in \C$ small enough for the point to remain in the chosen neighbourhood of $p$.
				We also write
				\[N(x, y, 0) :=  \overline{\beta y}\partial_x -  \overline{\alpha x}\partial_y. \]
				This defines a normal vector to the foliation in the hermitian metric.
				
				\subsection{Poincaré metric on the leaves}
				Let $\mathscr{F}$ be a (possibly singular) holomorphic foliation on $M$, with finite set of singularities $S$. We say that a regular leaf $L$ is \emph{hyperbolic} if it is hyperbolic as a Riemann surface, that is, its universal covering space is biholomorphic to the Poincaré disk $\Disk$. The Poincaré disk has a hermitian metric of constant curvature $-1$, defined for every $\zeta \in \Disk$ by
				\[g_P(\zeta) := \frac{4\abs{\deriv\zeta}^2}{\left(1 -\abs{\zeta}^2\right)^2}.\] 
				For any hyperbolic leaf $L$, pushing forward the Poincaré metric on $\Disk$ to $L$ by any universal covering map gives a well-defined hermitian metric $L$ which we also call the \emph{Poincaré metric} on $L$, and we denote by $g_P$. For every $p \in M\setminus S$ such that $L_p$ is hyperbolic, we can relate the Poincaré metric to the ambient metric: there exists $\eta(x)$ such that $g_M(p) = \eta^2(p)g_P(p)$.
				\begin{defn}
					We say that the (possibly singular) foliation  by Riemann surfaces $\mathscr{F}$ is \emph{Brody hyperbolic} if every leaf is hyperbolic and the function $\eta$ is bounded.
				\end{defn} 
				The important fact about this metric is the following, which follows from the work of Lins Neto \cite{LinsNeto}, and e.g. Candel \cite{Candel} (see also Nguyên \cite[lemma 2.4]{Nguyen}):
				
				\begin{thm}[Canille Martins-Lins Neto \cite{CMLN}, Bacher \cite{Bacher}]Let $\mathscr{F}$ be a (possibly singular) holomorphic foliation on $M$, with finite set of singularities $S$. Assume that all the singularities are hyperbolic and that $\mathscr{F}$ is Brody hyperbolic. Then there exists a constant $C > 1$ such that for every $p \in M \setminus S$,
					\[C^{-1}s\ell(s) \leq \eta_P(p) \leq Cs\ell(s),\]
					where $s = \dist(p, S)$.
					\label{prop:typepoincare}
				\end{thm}
				We will work in a context where the Poincaré metric is continuous:
				\begin{thm}[Bacher \cite{Bacher}]
					Let $\mathscr{F}$ be a (possibly singular) holomorphic foliation on $M$, with finite set of singularities $S$. Assume that all the singularities are hyperbolic  and that $\mathscr{F}$ is Brody hyperbolic. Then $\eta$ is continuous.
				\end{thm}
				In fact Bacher gives Hölder-type estimates for the modulus of continuity, but we will not need them. For technical reasons, the Poincaré metric $g_P$ is not suitable for our purpose, as it is generally only known to be continuous in our context, and we will use smoothness in an essential way later on. So we will use a more general class of so-called \emph{Poincaré-type} metric, which are comparable to the Poincaré-metric but can be chosen to be smooth.
				\begin{defn}
					Let $g$ be a metric on the leaves in $M\setminus S$. We say that $g$ is a \emph{Poincaré-type metric} if $g$ is complete on every leaf and has uniformly bounded geometry and there exists a constant $C > 1$ such that for every $p \in M \setminus S$,
					\[C^{-1} \leq g/g_P(p) \leq C.\]
				\end{defn}
				See below \ref{subsect:poincaretype} for the precise definition of "uniformly bounded geometry" in this context. Note that the completeness of $g$ follows from the fact that it is within a bounded factor from the Poincaré metric.

				\subsection{Harmonic currents and foliation cycles} For all this subsection, useful references are Dinh-Nguyên-Sibony \cite{DNSHeat}, Ghys \cite{Ghys} and Fornæss-Sibony \cite{FornaessSibonySurvey}. Consider a complex manifold $M$ of arbitrary dimension in this subsection, with a possibly singular foliation $\mathscr{F}$, and denote by $S$ its set of singularities, which we assume is finite.
				
				Consider the space $\mathrm{C}^{\infty}_c(M, \mathscr{F})$ of continuous functions with compact support in $M \setminus S$, which are transversely continuous and tangentially smooth with respect to $\mathscr{F}$, namely, in foliated coordinates $(z, t) \in D \times T$ as above, all the derivatives with respect to $z$ exist and are continuous in $(z, t)$. Consider also the space $\mathrm{\Omega}^{1,1}_c(M, \mathscr{F})$ of compactly supported, transversely continuous and tangentially smooth with respect to $\mathscr{F}$, $(1,1)$-forms on each leaf. Call $\omega \in \Omega^{1,1}_c(M, \mathscr{F})$ \emph{positive} if, restricted to each leaf, it is a positive form in the usual sense, that is, for every $p \in L_p$, and $v \in \left(T_{\mathscr{F}}\right)_p$, $-i\omega(v, \overline{v}) > 0$. Given $f \in \mathrm{C}^{\infty}_c(M, \mathscr{F})$, we can naturally define $\partial\overline{\partial}f \in \Omega^{1,1}_c(P, \mathscr{F})$.
				
				The space $\Omega^{1,1}_c(M, \mathscr{F})$ is naturally a topological vector space. A \emph{ current} (of bidimension $(1, 1)$) directed by $\mathscr{F}$ is a continuous linear form $T$ on $\Omega^{1,1}_c(M, \mathscr{F})$. A \emph{harmonic current} directed by $\mathscr{F}$ is a current $T$ on this space which satifies $\partial \overline{\partial}T = 0$ in the weak sense, that is, for every $f \in \mathrm{C}^{1,1}_c(P, \mathscr{F})$, $T\left(\partial \overline{\partial}f\right) = 0$, and which is positive in the sense that for every positive $\omega \in \Omega^{1,1}_c(M, \mathscr{F})$, $T(\omega) > 0$. Note that we can extend such a current by zero on $S$, and it satisfies $\partial\overline{\partial} T = 0$.
				
				By the work of Berndtsson-Sibony \cite[theorem 1]{BS}, we know a priori that a harmonic current directed by $\mathscr{F}$ always exists for a singular holomorphic foliation by curves with a finite set of singularities.
				
				If $M$ is Kähler and has (complex) dimension $2$, we have moreover the following construction, due to Candel \cite{Candel} (see also Deroin \cite{DeroinLevi}). A harmonic current $T$ on $M$ with respect to $\mathscr{F}$ naturally defines a linear form on the $(1,1)$ Bott-Chern cohomology group $\mathrm{H}^{1,1}_{\text{BC}}(M,\R)$, which is isomorphic to $\mathrm{H}^1(M,\R)$ by the $\partial\overline{\partial}$-lemma, and therefore by duality, a class $[T]$. Given a line bundle $\mathscr{L}$ on $M$, we can define the intersection number $[T] \cdot c_1(\mathscr{L})$ to be $\frac{i}{2\pi}T(\Theta_\mathscr{L})$, where $\Theta_\mathscr{L}$ is the curvature of a hermitian metric on $\mathscr{L}$.
				
				A particular class of harmonic currents are the \emph{foliation cycles} of Sullivan \cite{Sullivan}, which are also called positive closed currents: these are the currents $T$ directed by $\mathscr{F}$ which are closed, i.e. for every $\eta$ which is a $1$-form on leaves, $T(\deriv \eta) = 0$, where we have defined forms on leaves and their differential in an analogous way as above, and which are positive in the same sense as above. If there is a compact leaf, integration on it naturally defines a foliated cycle, and more generally, by Ahlfors' lemma, any leaf which is covered by $\C$ gives a foliation cycle (see Candel \cite{Candel}). In particular, if there is no foliation cycle, all the leaves are hyperbolic, and in fact the foliation is Brody hyperbolic (see Fornæss-Sibony \cite[theorem 15]{FornaessSibonySurvey}) in the sense above:
				\begin{thm}
					Assume that $\mathscr{F}$ has no foliation cycles. Then $\mathscr{F}$ is Brody hyperbolic, and in particular all its leaves are hyperbolic Riemann surfaces.
				\end{thm}

				\subsection{Harmonic measures}For all this subsection, useful references are Candel-Conlon \cite{CandelFoliationsII}, Candel \cite{CandelGarnett} and Garnett \cite{Garnett}. We choose $g$ a hermitian metric on the leaves, for instance a Poincaré-type metric. Given a harmonic current $T$ with respect to $\mathscr{F}$ on $M$, we can consider an associated \emph{harmonic measure} $\nu$, which is defined by
				\[\int \varphi\deriv\nu = T\left(\varphi\mathrm{vol}_g\right)\]
				for any $\varphi \in \mathrm{C}^{\infty}_c(M, \mathscr{F})$. We will write $\nu = T \wedge \mathrm{vol}_g$ for short. For $g$ a Poincaré-type metric, we will see that this measure is finite under the assumption of theorem \ref{lethm}. Then, denoting by $\Delta = \Delta_g$ the leafwise Laplace-Beltrami operator associated to $g$, because we have $2i\partial\overline{\partial}\varphi = \Delta \varphi \mathrm{vol}_g$ on each leaf, we get that $\Delta \nu = 0$ weakly, i.e. for any $\varphi \in \mathrm{C}^{\infty}_c(M, \mathscr{F})$,
				\[\int \left(\Delta \varphi\right)\deriv\nu = 0.\] 
				More generally, any positive finite Borel measure $\nu$ which satisfies this property and gives no mass to the singularities and the non-hyperbolic leaves is called a harmonic measure.
				
				There is a correspondence between harmonic currents, which are more intrinsic as they do not depend on the choice of a metric, and harmonic measures, which is described by the following theorem:
				\begin{thm}Consider on $M$ a holomorphic foliation by curves $\mathscr{F}$ with a finite set of singularities, all of which are hyperbolic. Let $g$ be a Poincaré-type metric. Let $T$ be a positive harmonic current directed by $\mathscr{F}$.
					\begin{itemize}
						\item The current $T$ can be extended to a positive closed harmonic current on $M$ (see \cite{BerndtssonSibony} for a definition and proof),
						\item The relation $T\mapsto \nu = T \wedge \mathrm{vol}_g$ is a one-to-one correspondence between the convex cone of harmonic currents $T$ and the convex cone of harmonic measures $\nu$ (with respect to the Poincaré type metric $g$),
						\item The current $T$ is extremal in the convex cone of directed positive harmonic currents, if and only if $\nu =T\wedge\mathrm{vol}_g$ is \emph{ergodic}, that is, every Borel subset of $X$ which is saturated by leaves has full or zero measure with respect to $\nu$,
						\item Assume that $g$ is transversely continuous. Then each harmonic measure $\nu$ is $D_t$-invariant for every $t \geq 0$, i.e.
						\[\int D_t f\deriv\nu = \int f\deriv \nu,\]
						for every $f \in \Leb^1(\nu)$ (see below for the definition of the diffusion semi-group $(D_t)$).
					\end{itemize}
				\end{thm}
				\begin{rem}
					For proofs and references for this theorem, see Nguyên \cite[theorem 2.7]{Nguyen}. The result are stated for linearisable singularities but they also hold for merely hyperbolic singularities here given lemma \ref{prop:typepoincare}, see in particular \cite[lemma 4.3 and proof of proposition 4.2]{DNSHeat} and Bacher \cite[section 4]{BacherHeat}). The last part of the theorem follows from the work of Candel \cite{CandelGarnett} or Candel-Conlon \cite{CandelFoliationsII} (see also the remarks on the notions of "weakly harmonic", "quasi-harmonic", and "harmonic" measures in the work of Nguyên \cite[remark 2.13]{NguyenSurvey} \cite[remark 2.9]{NguyenOseledec}), or from combining \cite{BacherHeat} and \cite{DNSHeat}.
				\end{rem}
				
				Finally, we quote the main result of Nguyên \cite{Nguyen}. It is a difficult result which is an essential input for what follows.
				\begin{thm}[Nguyên \cite{Nguyen}]Consider a compact complex projective surface $S$ with a singular holomorphic foliation, with all its singularities hyperbolic and no foliation cycle directed by it. Then for any harmonic measure $\nu$ with respect to a Poincaré-type metric on the leaves, we have
					\[\int\ell(p)\deriv\nu(p) < \infty.\]
					\label{thm:nguyen}
				\end{thm}
				
				\subsection{Brownian motion on the leaves} Now we briefly describe the Brownian motion on the leaves, introduced by Garnett \cite{Garnett}. For more information, see Candel-Conlon \cite[chapter 2]{CandelFoliationsII}, Candel \cite{CandelGarnett}, and Nguyên \cite[subsection 2.4]{Nguyen}. Here we follow mostly Deroin-Kleptsyn \cite{deroinkleptsyn}. We choose a complete hermitian metric on the leaves with uniformly bounded geometry.
				For every $p \in M \setminus S$, consider on $L_p$ the heat equation:
				\[\frac{\partial u}{\partial t} = \Delta u,\]
				\[u(t, \cdot) \xrightarrow[t\to 0]{} f,\]
				in the sense of distributions,
				where $f$ is the initial data. Because $g$ is complete with bounded geometry, there is a unique solution defined for all non-negative time, which we denote $(t, q) \mapsto D_tf(q)$, where $(D_t)$ is called the \emph{diffusion semigroup}. The diffusion $D_t$ is given by convolution with the \emph{heat kernel} associated to $g$, which we denote by $p_g = p_g(x, y; t)$:
				\[D_tf(x) = \int f(y)p_g(x, y; t)\deriv\mathrm{vol}_g(y),\]
				for $f$ Borel measurable on $M\setminus S$. It satisfies for every $s, t\geq 0$, $D_0 = \mathrm{id}$, $D_t 1 = 1$ and $D_s \circ D_t = D_{s+t}$. This semi-group has the following property:
				\begin{thm}[Garnett \cite{Garnett}]
					For every $f \in \mathrm{C}_c(M\setminus S)$, the function $D_tf$ is continuous.
				\end{thm}
				
				Associated to the diffusion semi-group is the Brownian motion along the leaves of the foliation, a Markov process with transition kernel $p(x, y; t)$. We denote by $\Gamma$ the set of continuous paths $\omega \colon [0, \infty) \to M \setminus S$ which remain in the same leaf, by $\Gamma_p$ the paths in $\Gamma$ starting at $p$. We can consider the $\sigma$-algebra generated by cylinder sets, that is sets $C$ for which there exists a finite set $F$ and Borel sets $(B_t)_{t \in F}$ in $M \setminus S$ such that $C = \{\gamma \,:\, \gamma(t) \in B_t \text{ for every } t \in F\}$. 
				We also consider, for every $p \in M \setminus S$ and $\tilde{p}$ a lift of $p$ in $\tilde{L}_p$, the set $\tilde{\Gamma}_{\tilde{p}}$ of continuous paths $[0, \infty) \to \tilde{L_p}$, and the disjoint union $\tilde{\Gamma}$ of the $\tilde{\Gamma}_{\tilde{p}}$. On $\tilde{\Gamma}$ we can similarly define a $\sigma$-algebra of cylinder sets, see Nguyên \cite[section 2.4]{Nguyen} for details: the union of the universal covering spaces of the leaves give a foliation $\tilde{\mathscr{F}}$ on a manifold $\tilde{M}$, and we can then adapt the definition of cylinder sets.
				
				On the measurable spaces $\tilde{\Gamma}_p$ we can define the \emph{Wiener measure} $\tilde{W}_{\tilde{p}}$: given $n \geq 1$ and a finite sequence $t_1 < t_2 < \ldots < t_n$ of times and Borel subsets $B_1, B_2, \ldots, B_n$ of $\tilde{M}$, we have
				\begin{align*}\tilde{W}_{\tilde{p}}\left\{\tilde{\gamma} \in \tilde{\Gamma}_{\tilde{p}}\,:\text{ for }i = 1, 2, \ldots, n,\,\tilde{\gamma}(t_i) \in B_i\right\}
				= \left(D_{t_1}\left(\chi_{B_1}D_{t_2 - t_1}\left(\chi_{B_2}\cdots D_{t_n-t_{n- 1}}(\chi_{B_n})\cdots\right)\right)\right)(p)\end{align*}
				where $\chi_{B_i}$ is the indicator function of $B_i$ (here the diffusion operators are with respect to the lift of the metric $g$ on $\tilde{L}_p$),
				We also have the Wiener measure on $\Gamma_p$, defined by $W_p(A) = \tilde{W}_{\tilde{p}}(\pi_p^{-1}A)$ for any measurable set $A$, where $\pi_p \colon \tilde{L}_p \to L_p$ is the projection. Note that it is independent of the choice of lift of $p$. Finally note that we have for every $t \geq 0$ and bounded Borel measurable function $\varphi$ on $\tilde{M}$ and $\psi$ on $M$, $q \in \tilde{L}_p$:
				\[\int \varphi(\gamma(t))\deriv\tilde{W}_{\tilde{p}}(\gamma) = D_tf(\tilde{p}),\]
				\[D_t(\psi \circ \pi_p)(q) = D_t\psi(\pi_p(q)).\]
				
				Given a harmonic measure $\nu$ on $P$ or $M$, we define the measure $\overline{\nu}$ on $\Gamma$ as follows
				\[\overline{\nu}(B) = \int\deriv\nu(p)W_p(\Gamma_p \cap B)\]
				where $B$ is a measurable set. Given $t \geq 0$ and $\gamma \in \Gamma$, we define $\sigma_t(\gamma)$ as the shifted path $s \mapsto \gamma(t + s)$. Then we have the important property that if $\nu$ is harmonic and $\varphi \in \Leb^1(\overline{\nu})$, we have for every $t \geq 0$
				\[\int\varphi \circ \sigma_t\deriv\overline{\nu} = \int\varphi\deriv\overline{\nu}.\]
				In fact, we have the following \emph{random ergodic theorem} (see Candel \cite{CandelGarnett} for instance):
				\begin{thm}
					If the measure $\nu$ is an ergodic harmonic measure,	the system $(\Gamma, (\sigma_t)_{t \geq 0}, \overline{\nu})$ is ergodic.
					In fact, for every $t > 0$, $(\Gamma, \sigma_t, \overline{\nu})$ is ergodic.
				\end{thm}
				We will also need repeatedly the following lemma: consider a measurable set $A$ which is \emph{tail-type}, that is, only depends on asymptotic properties of a path (see Candel \cite[section 6]{CandelGarnett} for a precise definition):
				\begin{lem}
					For a tail-type event $A$, assume that $W_p(A \cap \Gamma_p) = 1$. Then the same is true for any other point in the same leaf as $p$.
					\label{lem:queue}
				\end{lem}
				In general, the function $p \mapsto W_p(A \cap \Gamma_p)$ is harmonic along leaves, as can be seen using the Markov property (see Candel \cite[section 6]{CandelGarnett}) for a tail event $A$, and then the result follows from the maximum principle. 
				
				The following lemma is a consequence of standard bounds, for the heat kernel of manifolds with bounded geometry (see Nguyên \cite[section 4]{Nguyen}).
				\begin{lem}
                \label{lem:saut-brownien}
					There exists a constant $\RLemSautBrownien > 0$ and $\newtarget{lem:saut-brownien}{\cstLemSautBrownien} > 0$ such that for every 
					$\delta \in (0,2)$, $p \in M \setminus S$ and $\tilde{p}$ a lift in $\tilde{L}_p$, and all $R \geq \RLemSautBrownien$, we have
					\[\tilde{W}_{\tilde{p}}\left(\gamma \in \tilde{\Gamma}_{\tilde{p}} \,:\, \sup_{0 \leq t \leq \delta}\dist_g(\gamma(0), \gamma(t)) \geq R\right) \leq s^{-3/2}\exp\left(-\cstLemSautBrownien\frac{R^2}{\delta}\right).\]
					
				\end{lem}

				\section{Geometry of the foliation}
				\label{sect:geometrie}
				This section is devoted to technical statements concerning the geometry of the foliation. The proofs are somewhat tedious, and are often quite intuitive but require some care. They are very close in spirit to those of Dinh-Nguyên-Sibony \cite{DNSII}.
				\subsection{Construction of a smooth Poincaré-type metric}
				\label{subsect:poincaretype}
				\begin{defn}
					Let $g$ be a hermitian metric on a Riemann surface $Y$, not necessarily compact. We say that $g$ has \emph{bounded geometry} if we can cover $Y$ by a countable set of local charts such that there exists $\rho > 0$ and for every integer $r \geq 0$, a constant $M_r > 0$ with:
					\begin{itemize}
						\item The domain of the chart contains a ball of radius $\rho$,
						\item Identifying $g$ with $g(z)\abs{dz}$ in the chart, all the derivatives of $g$ and $g^{-1}$ up to order $r$ are bounded above by $M_r$.
					\end{itemize}
					
					Given a family of such metrics on a family of Riemann surfaces, we say that it has \emph{uniformly bounded geometry} if we can choose $\rho$ and the $(M_r)_{r \geq 0}$ uniformly.
				\end{defn}
				
				\begin{rem}
					In particular there exists $\rho_0 > 0$ depending only on $\rho$ and $M_0$ such that the injectivity radius of $Y$ is larger than $\rho_0$ at every point, and the curvature of $Y$ is bounded from above and below by constants depending only on $M_2$.
				\end{rem}
				\begin{lem}
					There exists a smooth Poincaré-type metric $g$ on the leaves which has uniformly bounded geometry on each leaf.
					\label{lem:geombornee}
				\end{lem}
				
				\begin{proof}[Proof of lemma \ref{lem:geombornee}]
					The proof is close to that of a similar result of Lins Neto \cite{LinsNeto}, but simpler. We only need to prove this in a neighbourhood of a singular point. Consider a local chart identified with $\mathbb{D}^3$, where $0$ corresponds to the unique singular point in this neighbourhood. Write $\phi^\zeta(p)$ for the local flow of a defining vector field, with $\zeta$ the complex time. We assume, as we may, that the ambient hermitian metric corresponds to the standard hermitian metric in this chart. Consider then for $p \in \mathbb{D}^2$ :
					\[g_p(V) = \frac{1}{\norm{X(p)}^2\ell(p)^2}\norm{V}^2\]
					
					Let $p \in \mathbb{D}^3$ with $\norm{p}$ small enough. Let $c > 0$ be small enough. Consider the map defined on $\mathbb{D}(c)$ by $t \mapsto \phi_p(\ell(p)t)$. It defines a local chart on $L_p$ around $p$.
					In this chart, the metric can be written, for $t \in \mathbb{D}(c)$ :
					\begin{align*}\left(f_p^*g\right)_t(v) = \frac{\ell(p)^2}{\ell(\phi^\zeta(p)))^2}\abs{v}^2
					\end{align*}
					where we have written $\zeta = \ell(p)t$. Reducing slightly the neighbourhood if necessary, $\norm{p}e^{C\abs{\zeta}} \geq \norm{\phi^\zeta(p)} \geq \norm{p}e^{-C\abs{\zeta}}$ for a constant $C > 0$, we deduce that $\left(f_p^*g\right)(v) \asymp \norm{v}^2$. Moreover, for any integer $r \geq 1$, we can check that  there exists $A_r > 0$ such the derivatives of order at most $r$ of $\zeta \mapsto 1/\log\left(\norm{\phi_p(\zeta)}\right)$ are bounded in modulus by \[\frac{A_r}{\log\left(\norm{\phi^\zeta(p)}\right)^{r + 1}},\] for $\abs{\zeta}$ small enough. This implies that there exists $C_r > 0$ such that the derivatives of order at most $r$ of $t \mapsto \left(f_p^*g\right)(v)$ are bounded in modulus by $C_r$. So $g$ has uniformly bounded geometry in a neighbourhood of the singular point.
				\end{proof}
				We note that in the notations of the proof, $\{\phi^\zeta(p)\,:\,\abs{\zeta}\lesssim 1\}$ contains, and is contained in, a ball for $g$ around $p$ of radius $\asymp \ell(p)^{-1}$.
                \begin{notation}We fix $g$ such a metric and denote by $\dist_g$ the distance induced by the Poincaré metric $g$ on a leaf. We will also write $B_g$ for the balls for this distance. From now on, when we say that a constant "depends only on the geometry of the foliation", we allow it to depend on $g$.
				\end{notation}
                Note also:
				\begin{lem}
					There exist constants $\eps > 0$, $C > 0$ such that if $p, q \in M \setminus S$ are in the same leaf and $\dist_g(p, q) \leq \eps\ell(p)^{-1}$, then $\dist(q, S) \geq \exp\left(-C\dist_g(p,q)\ell(p)\right)\dist(p, S)$ and $\abs{\ell(q) - \ell(p)} \lesssim \ell(p)\dist_g(p, q)$.
					\label{lem:dist-sing}
				\end{lem}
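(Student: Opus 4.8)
The plan is to reduce to a local computation near a singularity, since away from $S$ the function $\ell$ is bounded and the metrics $g$, $g_M$ are comparable, so the statement is trivial there. So I would work in a foliated chart near a singular point $p_0 \in S$, identified with $\mathbb{D}^2$ (the singularity case is the only one where $\ell(p) = -\log\dist(p,S)$ becomes large and the estimates have content). Recall from the proof of Lemma \ref{lem:geombornee} that on the leaf $L_p$, the map $t \mapsto \phi^{\ell(p)t}(p)$ is a chart in which the Poincaré-type metric $g$ is comparable to the Euclidean $\abs{dt}^2$ on a disk $\mathbb{D}(c)$ of fixed radius $c$. Hence a point $q$ on $L_p$ with $\dist_g(p,q) \leq \eps\ell(p)^{-1}$ (for $\eps$ small) is of the form $q = \phi^\zeta(p)$ with $\abs{\zeta} \lesssim \dist_g(p,q)\ell(p) \leq \eps$; note $\abs{\zeta}$ is small, which is what lets us use the flow estimates with fixed constants.

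The key step is then the flow distortion estimate already invoked in that proof: after slightly shrinking the neighbourhood, there is $C > 0$ with
\[
\norm{p}\, e^{-C\abs{\zeta}} \;\leq\; \norm{\phi^\zeta(p)} \;\leq\; \norm{p}\, e^{C\abs{\zeta}}
\]
for $\abs{\zeta}$ small. This comes from Gronwall applied to $\frac{\deriv}{\deriv\zeta}\phi^\zeta(p) = X(\phi^\zeta(p))$ together with the bounds $c^{-1}\norm{w} \leq \norm{X(w)} \leq c\norm{w}$ valid near a hyperbolic singularity (the eigenvalues of $\deriv X_{p_0}$ are nonzero, so $X$ vanishes to exactly first order). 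Since in this chart $\dist(\cdot,S) \asymp \norm{\cdot}$ and using $\abs{\zeta} \asymp \dist_g(p,q)\ell(p)$, the lower bound gives
\[
\dist(q,S) \;\gtrsim\; \norm{p}\, e^{-C'\abs{\zeta}} \;\gtrsim\; \exp\bigl(-C''\dist_g(p,q)\ell(p)\bigr)\,\dist(p,S),
\]
which is the first assertion (absorbing the implied multiplicative constants into the exponent by enlarging $C$, using that $\dist_g(p,q)\ell(p)$ ranges over a bounded set).

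For the second assertion, once $\dist(q,S)$ and $\dist(p,S)$ are pinched together multiplicatively as above — say $e^{-u}\dist(p,S) \leq \dist(q,S) \leq e^{u}\dist(p,S)$ with $u := C''\dist_g(p,q)\ell(p) \lesssim \eps$ bounded — take logarithms: when both distances are $\leq 1/3$ we have $\ell = -\log\dist(\cdot,S)$ exactly, so $\abs{\ell(q) - \ell(p)} \leq u = C''\dist_g(p,q)\ell(p)$, giving $\abs{\ell(q)-\ell(p)} \lesssim \ell(p)\dist_g(p,q)$ directly; if $\dist(p,S) > 1/3$ then $\ell(p) \asymp 1$ and (shrinking $\eps$) also $\dist(q,S) \gtrsim 1$, so both $\ell(p), \ell(q)$ lie in a bounded interval away from $0$ and the bound $\abs{\ell(q)-\ell(p)} \lesssim \dist_g(p,q) \lesssim \ell(p)\dist_g(p,q)$ follows from $\ell$ being Lipschitz on compact subsets of $(0,\infty)$ (it is smooth). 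I would finally note that for $q$ not near any singularity everything is immediate since all quantities are bounded above and below. The main obstacle, though it is minor, is bookkeeping the transition region where $\dist(p,S)$ is comparable to $1/3$ so that one uniform constant works in both regimes; this is handled just by shrinking $\eps$ so that $\dist_g(p,q) \leq \eps\ell(p)^{-1}$ forces $\dist(q,S)$ into the same regime as $\dist(p,S)$ up to a fixed factor.
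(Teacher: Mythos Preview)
Your proposal is correct and follows exactly the approach the paper sketches: reduce by compactness to a neighbourhood of a singular point, then use the local flow estimate $\norm{p}e^{-C\abs{\zeta}} \leq \norm{\phi^\zeta(p)} \leq \norm{p}e^{C\abs{\zeta}}$ (already invoked in the proof of Lemma~\ref{lem:geombornee}) together with the chart $t\mapsto\phi^{\ell(p)t}(p)$ to translate between $\abs{\zeta}$ and $\dist_g(p,q)\ell(p)$. The paper's own proof is a two-line sketch saying precisely this, so you have simply fleshed out the details it leaves implicit.
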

				\begin{proof}
					It suffices to assume that $p$ is close enough to a singularity, by a compactness argument and choosing $\eps$ small enough to ensure that $q$ is close to the singularity, too. Then we work in the local model to prove both estimates.
				\end{proof}
				\subsection{Projections along a path. } In this subsection, we record some abstract lemmas that will be used to define projections between leaves. We leave the proofs to the reader. They are similar in spirit to the construction of the holonomy (see Candel-Conlon \cite{CandelFoliationsII}).
				Let $x, y \in M$ be two regular points. Let $L$ be the leaf passing through $y$. We say that $y$ is an \emph{orthogonal projection} of $x$ if there exists $w \in \left(T_yL\right)^{\perp}$ such that $\exp_y\left(w\right) = x$.
				
				Let $U$ and $W$ be open sets insides two leaves. We call a map $\pi \colon U \to W$ a \emph{local projection from $U$ to $W$} if
				\begin{itemize}
					\item the map $\pi$ is a diffeomorphism onto its image,
					\item for every $x \in U$, $\pi(x)$ is an orthogonal projection of $x$ and moreover it is the only such point in $W$.
				\end{itemize}
				
				\begin{lem}[Unique continuation]Let $W$ and $U_1, U_2$ be open sets inside leaves. Let $\pi_1$ and $\pi_2$ be local projections from $U_1$ and $U_2$ respectively to $W$. Then $\pi_1$ and $\pi_2$ coincide on $U_1 \cap U_2$ and glue to define a map $\pi$ on $U_1 \cup U_2$ which is a local projection from $U_1 \cup U_2$ to $W$.
					\label{lem:proj-unique-continuation}
				\end{lem}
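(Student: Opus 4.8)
The plan is to argue that two local projections to a common target leaf $W$ must agree wherever both are defined, using the uniqueness clause built into the definition of a local projection together with a connectedness argument. First I would recall that by definition, for $x \in U_1 \cap U_2$, the point $\pi_1(x)$ is \emph{the only} point of $W$ that is an orthogonal projection of $x$; since $\pi_2(x)$ is also an orthogonal projection of $x$ lying in $W$, we get $\pi_1(x) = \pi_2(x)$ immediately, pointwise, on all of $U_1 \cap U_2$. So the "coincide" part is essentially a tautology once one unwinds the definitions — no continuity or connectedness is even needed here, because the uniqueness is demanded over the whole of $W$, not just locally.

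Next I would check that the glued map $\pi \colon U_1 \cup U_2 \to W$, defined by $\pi|_{U_i} = \pi_i$, is well-defined (immediate from the previous step) and is a diffeomorphism onto its image. Being a local diffeomorphism is clear since each $\pi_i$ is; the only subtlety is global injectivity. If $x \in U_1$ and $x' \in U_2$ satisfy $\pi(x) = \pi(x') =: y \in W$, then both $x$ and $x'$ lie on the normal exponential fibre $\exp_y\big((T_yW)^{\perp}\big)$, so $x$ and $x'$ are both orthogonal projections... in the wrong direction. To handle this I would instead invoke the local-diffeomorphism property plus the fact that, for $r$ small enough depending on the geometry, orthogonal projection onto a fixed leaf from a tubular neighbourhood is itself injective (this is the standard tubular-neighbourhood/holonomy fact referenced in the subsection), so on the relevant scale the fibres $\exp_y\big((T_yW)^{\perp}\big)$ for varying $y \in W$ are disjoint; hence $\pi(x) = \pi(x')$ forces $x = x'$. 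Finally, the glued $\pi$ satisfies the orthogonal-projection and uniqueness conditions on $U_1 \cup U_2$ trivially, since these are pointwise conditions inherited from $\pi_1$ and $\pi_2$.

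I expect the main (and only mildly delicate) obstacle to be the global injectivity of the glued map $\pi$, i.e.\ ruling out that a point in $U_1$ and a point in $U_2$ project to the same point of $W$ without already being equal; everything else is a direct unwinding of the definitions. Since the paper explicitly says the proof is left to the reader and the statement is a routine gluing lemma, I would keep this short, citing Candel--Conlon for the tubular-neighbourhood injectivity input.
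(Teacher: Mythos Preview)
Your treatment of the coincidence of $\pi_1$ and $\pi_2$ on $U_1 \cap U_2$ is correct and identical to the paper's: it is an immediate consequence of the uniqueness clause in the definition of a local projection. You also correctly identify global injectivity of the glued map $\pi$ as the only nontrivial point.

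However, your injectivity argument has a gap. You claim that, by a tubular-neighbourhood fact, the normal fibres $\exp_y\big((T_yW)^\perp\big)$ for varying $y \in W$ are disjoint, and conclude that $\pi(x) = \pi(x')$ forces $x = x'$. This is a non sequitur: disjointness of the fibres only tells you that $x$ and $x'$ lie in the \emph{same} fibre over $y = \pi(x) = \pi(x')$; it does not force them to coincide. The fibres here are two-complex-dimensional and the projection along them is a submersion, not an injection. To close the argument along your lines you would need that this single fibre meets the leaf containing $U_1$ and $U_2$ in exactly one point --- a transversality statement that requires a smallness hypothesis not present in the lemma and not supplied by the reference you cite.

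The paper's argument takes a different, self-contained route that uses only the data already at hand. Given $\pi_1(x_1) = \pi_2(x_2) = y$, write $x_i = \exp_y(w_i)$ with $w_i \in (T_yW)^\perp$, and consider the segment $t \mapsto \exp_y\big((1-t)w_1 + tw_2\big)$. By connectedness one locates a point $x$ on this segment lying in $U_1 \cap U_2$; since $y$ is by construction an orthogonal projection of $x$ and $x \in U_1$, the uniqueness clause gives $\pi_1(x) = y = \pi_1(x_1)$, and now the injectivity of $\pi_1$ on $U_1$ (already part of the hypothesis) forces $x = x_1$. Symmetrically $x = x_2$, whence $x_1 = x_2$. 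The essential idea you are missing is to exploit the \emph{separate} injectivity of each $\pi_i$ via an intermediate point in the overlap, rather than to appeal to any ambient tubular-neighbourhood structure.
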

				\begin{defn}Let $\gamma \colon [0, 1] \to M$ be a piecewise $\mathrm{C}^1$ path inside a leaf $L$, and $q$ an orthgonal projection of $\gamma(0)$. Let $L'$ be the leaf passing through $q$. We say that $q' \in L'$ is a \emph{projection along $\gamma$ starting from $q$} if there exist $n \in \N$, $0 = t_0 < t_1 < \ldots < t_n = 1$, a sequence $\left(U_i\right)_{0 \leq i \leq n}$ of open sets in $L$, a sequence $\left(W_i\right)_{0 \leq i \leq n}$ of open sets in $L'$, and for $i = 0, \ldots, n$, a local projection $\pi_i$ from $U_i$ to $W$ such that $\pi_n(\gamma(1)) = q'$ and for every $i = 0, \ldots, n - 1$, $\gamma([t_i, t_{i + 1}]) \subset U_i$ and $\pi_{i + 1}(U_{i + 1}) \subset W_i$.
					\label{def:proj-locale-chemin}
				\end{defn}
				
				Lemma \ref{lem:proj-unique-continuation} implies the following lemmas by standard arguments:
				\begin{lem}
					Let $\gamma \colon [0, 1] \to M$ be a path inside a leaf $L$, and $q$ an orthgonal projection of $\gamma(0)$. Then there exists at most one point $q'$ which is an orthogonal projection along $\gamma$ starting from $q$.
					\label{lem:proj-bien-definie}
				\end{lem}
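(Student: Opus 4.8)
The plan is to deduce uniqueness of the projection along $\gamma$ from the unique continuation lemma \ref{lem:proj-unique-continuation}, by an argument on the set of times $t$ for which the projection is determined up to $t$. First I would fix $q$, an orthogonal projection of $\gamma(0)$, lying in a leaf $L'$, and suppose we are given two data $(n, (t_i), (U_i), (W_i), (\pi_i))$ and $(m, (s_j), (V_j), (Z_j), (\rho_j))$ realising projections $q'$ and $q''$ along $\gamma$ starting from $q$ as in Definition \ref{def:proj-locale-chemin}. The key observation is that each of these two chains of local projections glues, via lemma \ref{lem:proj-unique-continuation} applied finitely many times, to a \emph{single} local projection defined on a connected open neighbourhood of the whole image $\gamma([0,1])$ inside $L$: indeed $\bigcup_i U_i$ is connected (it covers the connected set $\gamma([0,1])$ and consecutive $U_i$ overlap along $\gamma$) and lemma \ref{lem:proj-unique-continuation} lets us merge overlapping local projections two at a time into a local projection $\Pi$ on $\bigcup_i U_i$, and likewise a local projection $\mathrm P$ on $\bigcup_j V_j$. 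Then $q' = \Pi(\gamma(1))$ and $q'' = \mathrm P(\gamma(1))$.

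Next I would show $\Pi$ and $\mathrm P$ agree on the connected component of $\gamma(0)$ in $\bigl(\bigcup_i U_i\bigr)\cap\bigl(\bigcup_j V_j\bigr)$, which contains $\gamma([0,1])$. By the second bullet in the definition of local projection, for every $x$ in this intersection $\Pi(x)$ and $\mathrm P(x)$ are both \emph{the} orthogonal projection of $x$ in the target leaf $L'$ lying in the respective images; but since the initial condition $\Pi(\gamma(0)) = q = \mathrm P(\gamma(0))$ is shared, one runs the standard unique-continuation argument (exactly as in the commented-out proof of lemma \ref{lem:proj-unique-continuation}, using connectedness and the fact that an orthogonal projection into $W$ is unique): the set $\{x : \Pi(x) = \mathrm P(x)\}$ is open (both maps are local diffeomorphisms onto images in $L'$ and near a common value the orthogonal projection into $L'$ is locally unique) and closed (by continuity) and nonempty, hence all of the connected component. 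Evaluating at $x = \gamma(1)$ gives $q' = q''$.

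The main obstacle — and it is a mild one — is bookkeeping: making precise that the finitely many local projections in each datum really do glue into one local projection on a connected open set containing $\gamma([0,1])$, and that the relevant intersection is connected along $\gamma$. This is where lemma \ref{lem:proj-unique-continuation} does all the work, applied inductively: merge $U_0$ and $U_1$, then the result with $U_2$, and so on; at each stage the hypotheses of lemma \ref{lem:proj-unique-continuation} hold because $\gamma([t_i,t_{i+1}])\subset U_i$ forces consecutive domains to overlap. Since the paper explicitly says "Lemma \ref{lem:proj-unique-continuation} implies the following two lemmas" and leaves the proofs to the reader, I would present this as a short paragraph rather than a fully detailed induction, emphasising only the gluing step and the final unique-continuation comparison. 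Nothing here uses the geometry of the foliation beyond the abstract setup, so no estimates are needed.
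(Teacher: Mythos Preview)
Your approach is correct and is exactly what the paper intends: it merely records that lemma~\ref{lem:proj-unique-continuation} implies this statement and leaves the details to the reader, so your fleshing-out via gluing and an open--closed argument is in the same spirit.

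One small point of care in the bookkeeping. The literal hypothesis of lemma~\ref{lem:proj-unique-continuation} is that both local projections have the \emph{same} target $W$, whereas in Definition~\ref{def:proj-locale-chemin} the maps $\pi_i\colon U_i\to W_i$ and $\pi_{i+1}\colon U_{i+1}\to W_{i+1}$ have different targets. What makes the gluing go through is the nesting condition $\pi_{i+1}(U_{i+1})\subset W_i$ in that definition: for $x\in U_i\cap U_{i+1}$, the point $\pi_{i+1}(x)$ is an orthogonal projection of $x$ lying in $W_i$, and $\pi_i(x)$ is the \emph{unique} such point in $W_i$, hence $\pi_i(x)=\pi_{i+1}(x)$. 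So you are really using the \emph{proof} (or a trivial variant) of lemma~\ref{lem:proj-unique-continuation}, not its statement verbatim; it would be worth saying so. Relatedly, the glued map $\Pi$ on $\bigcup_i U_i$ need not be a global diffeomorphism onto its image, so calling it a ``local projection'' in the sense of the definition is a slight abuse --- but this is harmless, since your open--closed comparison of $\Pi$ and $\mathrm{P}$ only uses that each is continuous and sends every point to an orthogonal projection in $L'$, together with the local uniqueness of orthogonal projections near a common value.
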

				\begin{lem}
					Let $p, p', p''$ be points in a leaf $L$. Let $\gamma_1, \gamma_2$ be two paths in $L$, from $p$ to $p'$ and from $p'$ to $p''$ respectively. Let $\gamma$ be the composition of $\gamma_1$ and $\gamma_2$.
					Let $q$ be an orthogonal projection of $p$, $q'$ a projection of $p'$ along $\gamma_1$ starting from $p$, and $q''$ a projection of $p''$ along $\gamma_2$ starting from $q'$.
					
					Then $q''$ is a projection of $p''$ along $\gamma$ starting from $p$.
				\end{lem}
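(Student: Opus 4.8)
The plan is to prove the statement by directly splicing the finite combinatorial data (partitions, chains of open sets, and local projections) that witness the two hypotheses, the only delicate point being the behaviour at the junction point $p'$.

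First I would normalise the parametrisations. The notion introduced in Definition \ref{def:proj-locale-chemin} depends only on the image of the path and is invariant under monotone reparametrisation, since it refers solely to the sets $\gamma([t_i,t_{i+1}])$ and to the endpoints $\gamma(0),\gamma(1)$. So I may assume $\gamma\colon[0,1]\to L$ with $\gamma|_{[0,1/2]}$ a reparametrisation of $\gamma_1$ and $\gamma|_{[1/2,1]}$ a reparametrisation of $\gamma_2$; in particular $\gamma(1/2)=p'$. Then I extract the witnessing data: a partition $0=t_0<\dots<t_n=1$, open sets $U_i\subset L$, $W_i\subset L'$ and local projections $\pi_i\colon U_i\to W_i$ with $\pi_n(p')=q'$ for the hypothesis on $\gamma_1$; and $0=s_0<\dots<s_m=1$, open sets $U'_j\subset L$, $W'_j\subset L'$, local projections $\pi'_j\colon U'_j\to W'_j$ with $\pi'_0(p')=q'$ and $\pi'_m(p'')=q''$ for the hypothesis on $\gamma_2$.

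Next I would rescale $(t_i)$ onto $[0,1/2]$ and $(s_j)$ onto $[1/2,1]$ and concatenate them into a single partition $0=r_0<\dots<r_{n+m}=1$ with $r_n=1/2$. Away from the junction the chains of open sets, codomains and local projections are kept unchanged, and the inclusions required by Definition \ref{def:proj-locale-chemin} are exactly the original ones. At the junction, both $\pi_n$ (defined on $U_n\ni p'$) and $\pi'_0$ (defined on $U'_0\ni p'$) are local projections sending $p'$ to $q'$; after passing to the common codomain $W_n\cap W'_0$ and the corresponding open domains, Lemma \ref{lem:proj-unique-continuation} forces them to coincide on a neighbourhood $O$ of $p'$ inside $U_n\cap U'_0$. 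Inserting, if necessary, one extra partition point on either side of $r_n$ so that the two intervals abutting $r_n$ have $\gamma$-image contained in $O$, I can use the single local projection $\pi_n|_O=\pi'_0|_O$ there; all the inclusions $\gamma([r_k,r_{k+1}])\subset V_k$ and $\widetilde\pi_{k+1}(V_{k+1})\subset\widetilde W_k$ of Definition \ref{def:proj-locale-chemin} are then satisfied. Since the final projection in this spliced list is $\pi'_m$ and $\pi'_m(\gamma(1))=\pi'_m(p'')=q''$, the data witnesses that $q''$ is a projection of $p''$ along $\gamma$ starting from $q$.

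I expect the only genuine obstacle to be the junction analysis of the previous paragraph: one has to check that the local projection coming from the $\gamma_1$-side and the one coming from the $\gamma_2$-side agree near $p'$, which is precisely the content of the unique continuation Lemma \ref{lem:proj-unique-continuation} (together with the uniqueness of the projected point, Lemma \ref{lem:proj-bien-definie}). Everything else is reindexing and rescaling.
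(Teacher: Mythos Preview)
Your proposal is correct and is exactly the kind of direct splicing argument the paper has in mind; indeed, the paper states this lemma together with the surrounding ones and explicitly leaves their proofs to the reader, so there is no alternative proof to compare against. One small remark: the appeal to Lemma~\ref{lem:proj-unique-continuation} at the junction is a slight overkill---since $q'=\pi_n(p')\in W_n$ and $q'=\pi'_0(p')$, it suffices to shrink $U'_0$ to $(\pi'_0)^{-1}(W_n)\cap U'_0$ (refining the partition on the $\gamma_2$-side accordingly) to obtain the missing chain inclusion $\pi'_0(\tilde U'_0)\subset W_n$, without ever invoking that $\pi_n$ and $\pi'_0$ actually agree near $p'$---but your argument works as written.
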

				\begin{lem}[Homotopy invariance]Let $p$ and $p'$ be points in a leaf $L$. Consider $q$ an orthogonal projection of $p$. Suppose that we are given an homotopy $(\gamma_s)_{0 \leq s \leq 1}$ with fixed endpoints between paths from $p$ to $p'$. Assume that for every $s \in [0, 1]$, there is a point $q'_s$ which is an orthogonal projection of $p'$ along $\gamma_s$ starting from $q$. Then $q'_s$ is independent of $s$.
					\label{lem:proj-invariance-homotopie}\end{lem}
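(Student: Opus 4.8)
The plan is to argue by a standard connectedness-in-$s$ argument, which is the usual way one proves that holonomy depends only on the homotopy class of a path. First I would fix the homotopy $H(s,u) = \gamma_s(u)$, $H\colon [0,1]\times[0,1]\to L$, continuous (piecewise $\mathrm{C}^1$ in $u$), with $H(s,0)=p$ and $H(s,1)=p'$ for all $s$. By hypothesis, for each $s$ there is a projection $q'_s$ of $p'$ along $\gamma_s$ starting from $q$; in particular, unwinding Definition \ref{def:proj-locale-chemin}, there is a subdivision $0=t_0<\cdots<t_n=1$ and a chain of local projections $\pi_i$ defined on open sets $U_i\ni\gamma_s([t_i,t_{i+1}])$ with the required compatibility. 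The key local fact is that such a chain of local projections is \emph{stable under small perturbations of the path}: if $\gamma$ and $\tilde\gamma$ are two paths from $p$ to $p'$ which are uniformly close enough (closer than a Lebesgue number associated to the cover $\{U_i\}$ of the image of $\gamma$, measured in the ambient metric $g_M$), then $\tilde\gamma$ also admits a projection starting from $q$, using the \emph{same} local projections $\pi_i$ on the \emph{same} open sets $U_i$, and the resulting endpoint is the same point $q'$. This is an immediate consequence of Lemma \ref{lem:proj-unique-continuation} (unique continuation) together with the uniqueness Lemma \ref{lem:proj-bien-definie}: the endpoint is determined by the chain of $\pi_i$'s applied to $\gamma(1)=p'$, which does not move.

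With this local stability in hand, I would consider the set $A\subset[0,1]$ of parameters $s$ such that $q'_s=q'_0$. This set is non-empty (it contains $0$). It is open: if $s\in A$, pick the subdivision and cover $\{U_i\}$ witnessing the projection along $\gamma_s$; by continuity of $H$ and compactness of $[0,1]$, for $s'$ close to $s$ the path $\gamma_{s'}$ is uniformly close to $\gamma_s$, hence (by the local stability above, applied on the subdivision of $\gamma_s$) $\gamma_{s'}$ admits a projection starting from $q$ using the same chain, with the same endpoint, so $q'_{s'}=q'_s=q'_0$ by the uniqueness Lemma \ref{lem:proj-bien-definie}. It is closed by the symmetric argument: if $s_k\to s$ with $s_k\in A$, then for $k$ large $\gamma_{s_k}$ is uniformly close to $\gamma_s$; applying the local stability in the other direction (using a subdivision and cover witnessing the projection along $\gamma_s$, which exists by hypothesis), the projection along $\gamma_{s_k}$ computed via the chain for $\gamma_s$ agrees with $q'_s$, and by uniqueness this equals the given $q'_{s_k}=q'_0$; hence $q'_s=q'_0$ and $s\in A$. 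Therefore $A=[0,1]$, which is the claim.

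I would isolate the perturbation statement as the technical heart of the argument; concretely: \emph{given a path $\gamma\colon[0,1]\to L$ admitting a projection along $\gamma$ starting from $q$ via a chain $(\pi_i, U_i, W_i)_{0\le i<n}$ subordinate to a subdivision $(t_i)$, there is $\delta>0$ such that every path $\tilde\gamma$ from $p$ to $p'$ with $\sup_u \dist(\gamma(u),\tilde\gamma(u))<\delta$ satisfies $\tilde\gamma([t_i,t_{i+1}])\subset U_i$, hence admits a projection along $\tilde\gamma$ starting from $q$ via the same chain, with the same endpoint.} This reduces to the elementary observation that each $U_i$ contains an $\eps_i$-neighborhood (in $L$) of the compact set $\gamma([t_i,t_{i+1}])$, so $\delta = \min_i \eps_i$ works, and that the endpoint $\pi_n(\tilde\gamma(1)) = \pi_n(p') = \pi_n(\gamma(1))$ is literally unchanged; uniqueness then comes from Lemma \ref{lem:proj-bien-definie}. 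The main obstacle is bookkeeping: one must make sure the open sets $U_i$ are taken inside the \emph{leaf} $L$ (not in $M$), so that "$\tilde\gamma$ stays close to $\gamma$ inside $L$" is what is needed — but since $\tilde\gamma$ and $\gamma$ both lie in $L$ and are $\delta$-close in the ambient metric, for $\delta$ small they are close in the intrinsic metric of $L$ as well on the compact pieces, because a tubular neighborhood of the compact arc $\gamma([t_i,t_{i+1}])$ inside $L$ contains an ambient ball of definite radius around that arc intersected with $L$. Modulo this routine point, the connectedness argument closes the proof.
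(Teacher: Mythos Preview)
Your argument is correct and is precisely the standard connectedness-in-$s$ proof one expects here. The paper itself does not give a proof of this lemma: at the start of the subsection the authors explicitly write that they ``leave the proofs to the reader'' and that these statements are ``similar in spirit to the construction of the holonomy (see Candel--Conlon)''. So there is nothing to compare against; your write-up is exactly the intended argument.

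One minor simplification: your final paragraph worries about comparing ambient distance to intrinsic leaf distance, but this is unnecessary. The homotopy $H$ and all the $U_i$ live inside the single leaf $L$, so you may work entirely in $L$ with its own manifold topology; continuity of $H\colon[0,1]^2\to L$ and openness of the $U_i$ in $L$ are all that is needed to produce the Lebesgue number $\delta$, and no passage through the ambient metric is required.
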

				\subsection{Projections of leaves. }
				We will prove below (subsection \ref{sub:etude-locale}) the following lemma, which constructs local orthogonal projections. In this subsection we apply it in conjunction with the lemmas of the previous section to define "global" orthogonal projections.
				\begin{lem}
                					\label{lem:proj-locale}
					For every $\lambdaLemProjLocale > 0$, there exist constants  $\newtarget{lem:proj-locale}{\epsLemProjLocale} \in (0, 1/2)$ and $\rLemProjLocale > 0$, $\RLemProjLocale > 0$, $\kappaLemProjLocale > 0$ such that for every $p \in P \setminus S$, $q \in M \setminus S$ with $q \in S(p, \epsLemProjLocale \dist(p, S))$, there exists a local projection $\pi$ from the ball $B\left(q, \frac{\rLemProjLocale}{\ell(p)}\right)$ to $B\left(q, \frac{\lambda \RLemProjLocale}{\ell(p)}\right)$ whose image is included in $B\left(q, \frac{\RLemProjLocale}{\ell(p)}\right)$.
					
					Moreover, for every $z \in B\left(q, \frac{\rLemProjLocale}{\ell(s)}\right)$, we have
					\begin{equation}\dist(\pi(z), z) \leq e^\kappaLemProjLocale \dist(p, q).
					\label{eq:dist-projA}\end{equation}
					
					Here the balls are with respect to the Poincaré-type metric on the leaves.

				\end{lem}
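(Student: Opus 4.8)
The statement is local near a point $p \in P \setminus S$, so the natural strategy is a compactness/dichotomy argument: either $p$ stays at bounded distance from $S$, where the foliation is smooth and the constructions are standard, or $p$ is close to a singularity, where we must work in the local model of Section~\ref{sect:local}. I would first dispatch the easy regime. If $\dist(p,S) \geq \delta$ for a fixed $\delta > 0$, then on a fixed-size ambient neighbourhood the foliation has uniformly controlled foliated charts, and the Poincaré-type metric $g$ is uniformly comparable to $g_M$ by Proposition~\ref{prop:typepoincare} and Lemma~\ref{lem:geombornee}; the desired local projection is then constructed directly by flowing orthogonally in foliated coordinates (this is essentially the classical holonomy construction), with all constants uniform by compactness of $\{p \in P : \dist(p,S) \geq \delta\}$ and of $P$. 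The estimate \eqref{eq:dist-projA} in this regime is immediate because $\dist(\pi(z),z)$ and $\dist(p,q)$ are both comparable, with uniform constants, to the size of the transverse displacement, which is preserved up to a bounded multiplicative factor along a bounded path.

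The substantive case is $p$ close to a hyperbolic singularity $p_0 \in S$, which I would treat in the local chart $\Disk^2$ (or $\Disk^3$, but $P$ invariant reduces the relevant directions) with defining vector field $X$ as in Section~\ref{sect:local}. Here the key scaling is that a $g$-ball of radius $\asymp \ell(s)^{-1}$ around $q$ corresponds, via the reparametrised flow chart $t \mapsto \phi^{\zeta}$ with $\zeta = \ell(p) t$, to a region of bounded size in the rescaled coordinate $t$ — exactly the remark following Lemma~\ref{lem:geombornee}. So I would pull everything back by this chart: a point $z$ in the $g$-ball $B(q, \rLemProjLocale \ell(s)^{-1})$ corresponds to $\phi^{\zeta}(q)$ for $|\zeta| \lesssim \rLemProjLocale$, and the leaf $L'$ through $q$ is the flow-out of $q$, parametrised the same way with time $\zeta'$ in a slightly larger window $|\zeta'| \lesssim \RLemProjLocale$. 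Defining $\pi(z)$ as the orthogonal projection onto $L'$ then amounts to solving, for $z = \phi^{\zeta}(q)$, the equation $\exp_{\phi^{\zeta'}(q)}(w) = z$ with $w$ normal to the leaf — this is a fixed-point problem in $(\zeta', w)$ which is a small perturbation of the identity once the transverse displacement $\dist(p,q) \leq \epsLemProjLocale s$ is small relative to the scale $s \ell(s)^{-1}$ of the chart. I would solve it by the implicit function theorem / a contraction mapping, using the flow estimates $\|p\| e^{-C|\zeta|} \leq \|\phi^{\zeta}(p)\| \leq \|p\| e^{C|\zeta|}$ and the analogous derivative bounds from the proof of Lemma~\ref{lem:geombornee}; choosing $\epsLemProjLocale$ small (depending on $\lambdaLemProjLocale$ and the geometry) and $\rLemProjLocale$ small, $\RLemProjLocale$ appropriately then guarantees solvability, uniqueness of $\pi(z)$ in $B(q, \lambda \RLemProjLocale \ell(s)^{-1})$, that $\pi$ is a diffeomorphism onto its image, and that the image lands in $B(q, \RLemProjLocale \ell(s)^{-1})$. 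The estimate \eqref{eq:dist-projA} comes from tracking how the normal displacement $\dist(p,q)$ evolves under the flow: the transverse part of $\phi^{\zeta}$ expands by at most $e^{C|\zeta|} \leq e^{\kappaLemProjLocale}$ over the bounded time window, and the projection only adjusts this by a further bounded factor, so $\dist(\pi(z),z) \leq e^{\kappaLemProjLocale}\dist(p,q)$ with $\kappaLemProjLocale$ absorbing both. Finally, $|\ell(q) - \ell(p)| \lesssim \ell(p)\dist_g(p,q)$ and the comparability of $\dist(q,S)$ with $\dist(p,S)$ from Lemma~\ref{lem:dist-sing} let me replace $\ell(s)$ with $\ell(\dist(q,S))$ everywhere up to harmless constant changes, so the statement is insensitive to which of $p,q$ we base the radii on.

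\textbf{Main obstacle.} The delicate point is the uniformity of all constants as $p \to S$: the chart radius, the flow estimates, and the implicit-function-theorem quantitative bounds must all be controlled purely in terms of the rescaled coordinate $\zeta$ and the geometry of the foliation, never in terms of $s = \dist(p,S)$ itself, which degenerates. This is exactly the role of the $\ell(s)^{-1}$ rescaling and of the uniformly bounded geometry of $g$ established in Lemma~\ref{lem:geombornee} — the whole point of introducing the Poincaré-type metric — but verifying that the perturbation controlling the fixed-point equation is genuinely small \emph{uniformly} requires care in comparing the ambient exponential map (which lives at the ambient scale) with the intrinsic leafwise geometry (which lives at scale $s\ell(s)^{-1}$ near the singularity). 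I expect this bookkeeping, rather than any single estimate, to be the technical heart of the argument, which is presumably why the paper defers the proof to the later subsection~\ref{sub:etude-locale} devoted to the local study.
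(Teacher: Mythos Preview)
Your plan is correct and is essentially the paper's own argument: reduce to the singular regime, parametrise both leaves by the local flow $\phi^\zeta$, and solve the orthogonal-projection condition by a quantitative implicit function theorem, with all estimates carried out in the rescaled time variable so that the constants stay uniform as $p\to S$. The paper makes this explicit by introducing the function
\[
F(t,u,\zeta,\xi)=\frac{1}{\norm{p}^2}\det\Bigl(\phi^{\zeta}(q)-\phi^{\zeta+\xi}(p),\,N\bigl(\phi^{\zeta+\xi}(p)\bigr),\,\tfrac{\partial}{\partial z}\Bigr),
\]
with $q=p+tN(p)+u\norm{p}\tfrac{\partial}{\partial z}$, verifying by hand that $\partial_\xi F(0)$ is invertible with bounded inverse and that the relevant derivatives of $F$ are bounded independently of $p$, and then applying Lemma~\ref{lem:implicit}. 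The resulting projection sends $\phi^{\zeta}(q)$ to $\phi^{\zeta+\xi(t,u,\zeta)}(p)$, and your flow-tracking of the transverse displacement is precisely Claim~\ref{claim:derivee-proj}, which gives \eqref{eq:dist-projA}.

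One slip to correct: you describe $\pi(z)$ as the orthogonal projection of $z$ onto ``the leaf $L'$ through $q$'', but $z=\phi^{\zeta}(q)$ already lies in $L_q$, so that would be the identity. The target leaf is $L_p$ (the second occurrence of $q$ in the displayed balls of the lemma is a typo for $p$): one solves for a time correction $\xi$ so that $\phi^{\zeta+\xi}(p)\in L_p$ is an orthogonal projection of $\phi^{\zeta}(q)$. With that fixed, your sketch matches the paper's proof in subsection~\ref{sub:etude-locale}.
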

				Here the role of the parameter $\lambda$, which will be chosen a constant depending only on the geometry of the foliation, is to have compatibility on overlaps as in the lemmas of the previous section.
				
				\begin{lem}\label{lem:proj-globale}
                There exists a constant $\newtarget{lem:proj-globale}{\CstLemProjGlobale} \geq 1$ and $\eps_0 > 0$ such that the following holds:
                
					Let $p \in P \setminus S$ with lift $\tilde{p} \in \tilde{L}_{\tilde{p}}$, $q\in M\setminus S$ be two points, with $q \in S(p)$ with lift $\tilde{q} \in \tilde{L}_q$ and
					\[\exp\left(\CstLemProjGlobale\ell(p)e^{\CstLemProjGlobale R}\right)\dist(p,q) \leq \eps_0.\]
					Then there exists a smooth map $\Phi_{\tilde{q}\tilde{p}} \colon B_g(\tilde{q},R) \to \tilde{L}$ sending $\tilde{q}$ to $\tilde{p}$ which is locally an orthogonal projection, i.e. for every $z \in B_g(\tilde{q},R)$, the  map $\Phi_{\tilde{q}\tilde{p}}$ coincides locally around $z$ with the local projection of lemma \ref{lem:proj-locale} from $z$ to $\Phi_{\tilde{q}\tilde{p}}(z)$. This map $\Phi_{\tilde{q}\tilde{p}}$ is a diffeomorphism onto its image, which is $2$-Lipschitz and whose inverse is $2$-Lipschitz. Moreover the image of $\Phi_{\tilde{q}\tilde{p}}(z)$ is contained in a ball (for the Poincaré-type metric) of radius $2R$, and contains a ball of radius $R/2$.
					
					Moreover, for every $z \in B_g^L(\tilde{q}, R)$ and every $R'$ such that
					\[\exp\left(\CstLemProjGlobale\ell(w)e^{\CstLemProjGlobale R'}\right)\dist(z, w) \leq \eps_0,\]where $w = \Phi_{\tilde{q}\tilde{p}}(z)$,
					the map $\Phi_{zw} \colon B_g^L(z, R') \to L$ coincides with $\Phi_{\tilde{q}\tilde{p}}$ on $B_g(q, R) \cap  B_g(z, R')$.
					
				\end{lem}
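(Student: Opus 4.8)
The plan is to build the map $\Phi_{qp}$ by analytic continuation of the local projections of Lemma \ref{lem:proj-locale} along paths emanating from $\tilde q$ inside the ball $B_g(\tilde q, R)$, using the unique continuation and homotopy invariance statements of the previous subsection. Concretely, for $z \in B_g(\tilde q, R)$ pick a geodesic (or any short, say length-$\leq R$) path $\gamma$ from $\tilde q$ to $z$ and define $\Phi_{qp}(z)$ to be the projection along $\gamma$ starting from $p$, in the sense of Definition \ref{def:proj-locale-chemin}; one subdivides $\gamma$ finely enough that each piece lies in a ball $B(w, r_0/\ell(s))$ to which Lemma \ref{lem:proj-locale} applies, and composes the resulting local projections. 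The first task is to check this is well-defined: along the way the base point stays within a controlled distance of $\tilde L$ (here enters the hypothesis $\exp(C\ell(p)e^{CR})\dist(p,q)\leq\eps_0$, which must be propagated step by step using the distance estimate \eqref{eq:dist-projA} together with Lemma \ref{lem:dist-sing} to control how $\dist(\cdot,S)$ and $\ell(\cdot)$ change), so that the local projections always exist; then Lemma \ref{lem:proj-bien-definie} gives independence of the subdivision, and since $B_g(\tilde q,R)$ is simply connected (geodesic balls in the hyperbolic-type leaf are simply connected, or one uses that two short paths are homotopic through short paths) Lemma \ref{lem:proj-invariance-homotopie} gives independence of the path. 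Hence $\Phi_{qp}$ is a well-defined smooth map, locally agreeing with the projections of Lemma \ref{lem:proj-locale}.

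Next I would establish the metric properties. That $\Phi_{qp}$ is a local diffeomorphism is immediate from Lemma \ref{lem:proj-locale}; the bi-Lipschitz bound with constant $2$ follows by integrating the infinitesimal comparison of the local projections along paths — one chooses the constants $r_0, R_0, \kappa$ (equivalently $\eps_0$) from Lemma \ref{lem:proj-locale} so that each local projection is, say, $(1+\delta)$-bi-Lipschitz with $\delta$ small, and the smallness of $\dist(p,q)$ via \eqref{eq:dist-projA} keeps the accumulated distortion below $2$ over the whole ball of radius $R$. Injectivity (hence that $\Phi_{qp}$ is a diffeomorphism onto its image) is where the orthogonal-projection structure is essential: if $\Phi_{qp}(z_1)=\Phi_{qp}(z_2)=w$ then $z_1, z_2$ are both orthogonal projections of $w$ lying in a small transverse section $S(w, r)$, and the uniqueness clause in the definition of local projection — extended globally exactly as in the proof of Lemma \ref{lem:proj-unique-continuation} by connecting $z_1$ to $z_2$ through the section — forces $z_1=z_2$. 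The statements about the image containing a ball of radius $R/2$ and being contained in a ball of radius $2R$ then follow from the bi-Lipschitz bound applied to $\Phi_{qp}$ and to a local inverse.

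Finally, the compatibility of $\Phi_{zw}$ with $\Phi_{qp}$ on the overlap $B_g(z,R')\cap B_g(\tilde q,R)$ is again pure unique continuation: for a point $z'$ in the overlap, concatenating a short path from $\tilde q$ to $z$ (defining $\Phi_{qp}$ near $z$, hence identifying $w=\Phi_{qp}(z)$) with a short path from $z$ to $z'$ gives a projection of $z'$ along this concatenation starting from $p$, which by Lemma \ref{lem:proj-bien-definie} and Lemma \ref{lem:proj-invariance-homotopie} equals both $\Phi_{qp}(z')$ and $\Phi_{zw}(z')$. The main obstacle — the part requiring genuine care rather than bookkeeping — is the bootstrapping of the hypothesis $\exp(C\ell(p)e^{CR})\dist(p,q)\leq\eps_0$ along the continuation: one must verify that as the base point moves a $g$-distance up to $R$ (or $R'$), the quantity $\ell(\cdot)\,e^{C\,(\text{remaining radius})}\,\dist(\cdot, \cdot)$ stays below $\eps_0$, which is precisely why the exponential factor $e^{CR}$ appears in the statement; this uses the exponential-type control of $\dist(\cdot, S)$ from Lemma \ref{lem:dist-sing} and a careful choice of the constant $C$ relative to the constants in Lemma \ref{lem:proj-locale}. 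Choosing $C$ large enough (depending only on the geometry of the foliation) closes the induction.
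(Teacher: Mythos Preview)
Your proposal is correct and follows essentially the same route as the paper. The paper packages the continuation step as an explicit Claim (for a piecewise geodesic of length $R$, the projection along it exists and satisfies $\ell(p')\le e^{CR}\ell(p)$ and $\dist(q',p')\le \exp(C\ell(p)e^{CR})\dist(q,p)$), proved first for $R\le d_0$ by subdividing with step size $\eta\,\ell(p_{n-1})^{-1}$ and inducting exactly as you describe via \eqref{eq:dist-projA} and Lemma~\ref{lem:dist-sing}, then bootstrapped to general $R$; the compatibility part is handled with an additional elementary lemma furnishing a homotopy through short piecewise geodesics between the direct geodesic and the broken one, so that Lemma~\ref{lem:proj-invariance-homotopie} applies. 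One small slip: in your injectivity paragraph you write ``$z_1,z_2$ are both orthogonal projections of $w$'', but with the paper's conventions it is $w$ that is an orthogonal projection of each $z_i$; your conclusion that $z_1,z_2\in S(w,r)$ is nonetheless the right one.
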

				\begin{proof}
					We start by proving the following claim:
					\begin{claim}There exists a constant $C \geq 1$ such that the following holds:
						Let $\gamma$ be a piecewise geodesic path between two points $q, q' \in M \setminus S$ in the same leaf of length $R$. Let $p \in M \setminus S$ be such that $p$ is an orthogonal projection of $q$. Assume that $\exp\left(C\ell(p)e^{CR}\right)\dist(p, q) \leq \epsLemProjLocale$. Then there exists a point $p'$ in the leaf of $p$ which is a projection of $q'$ along $\gamma$ starting from $p$, and moreover:
						\[\ell(p') \leq e^{CR}\ell(p),\] 
						\[\dist(q', p') \leq \exp\left(C\ell(p)e^{CR}\right)\dist(q, p).\]
					\end{claim}
					\begin{proof}
						We prove that there exists a constant $d_0 > 0$ such that the claim holds if $R \leq d_0$. Then the general case follows by induction. Thus we can assume that $\gamma$ is a minimising unit-speed geodesic.
						
						If $R\ell(p)$ is small enough, the claim follows by lemmas \ref{lem:dist-sing} and \ref{lem:proj-locale}. Otherwise, we construct inductively a finite sequence of times $t_0, t_1, \ldots, t_N$, points $p_0, p_1, \ldots, p_N$ and $q_0, q_1, \ldots, q_N$, and maps $\pi_0, \pi_1, \ldots, \pi_N$, for some $N \in \N$. We start with $t_0 = 0$, $p_0 = p$, $q_0 = q$ and $\pi_0$ the local projection given by lemma \ref{lem:proj-locale} associated to $p_0$ and $q_0$ (this is possible since the condition on $p$ and $q$ implies that $\dist(p,q) \leq \epsLemProjLocale\dist(p, S)$).
						Assume that we have constructed the first $n$ terms of the sequence, for some positive integer $n$. Let $t_{n} = t_{n - 1} + \eta\ell(p_{n - 1})^{-1}$ for $\eta > 0$ a small enough constant, $q_{n}= \gamma(t_{n})$, $p_{n} = \pi_{n - 1}(q_{n})$ which is well-defined. By induction, equation \ref{eq:dist-projA} implies
						\begin{equation}\dist(p_n, q_n) \leq e^{n\kappaLemProjLocale}\dist(p, q).
						\label{eq:lem-proj-chemin}\end{equation}

						Then by lemma \ref{lem:dist-sing}, if $\eta$ is small enough $ \dist(p_n,S) \geq e^{-1}\dist(p_{n -1}, S)$ and $\ell(p_n) \leq \ell(p_{n -1}) + 1$. Therefore by induction, $\dist(p_n, S) \geq e^{-n}\dist(p, S)$ and $\ell(p_n) \leq \ell(p) + n$, and thus, so long as $n \leq \ell(p)$, $\dist(p_n, S) \geq e^{-\ell(p)}\dist(p, S)$ and $\ell(p_n) \leq 2\ell(p)$. Therefore
						\[\frac{\dist(p_n, q_n)}{\dist(p_n, S)} \leq e^{n(1+\kappaLemProjLocale )}\frac{\dist(p,q)}{\dist(p, S)} \leq e^{C\ell(p)}\dist(p,q),\] for some constant $C > 0$. Choose the constants in the claim in such a way that this implies $\dist(p_n, q_n) \leq \epsLemProjLocale\dist(p_n, S)$. Then we can consider the local projection given by lemma \ref{lem:proj-locale} associated to $p_n$ and $q_n$. 
						We continue this way until $n \geq \frac{1}{2}\ell(p)$. Then the data we have constructed gives a projection along a path as in definition \ref{def:proj-locale-chemin}. 
						This establishes the claim since by induction $t_n \geq \frac{n}{2}\eta\ell(p)^{-1} \geq \frac{1}{4}\eta$ which is a constant depending only on the geometry of the foliation. 
					\end{proof}
					
					Given the claim, using the lemmas of the previous paragraph (lemmas \ref{lem:proj-bien-definie} and \ref{lem:proj-invariance-homotopie}), we obtain the existence of the map $\Phi_{\tilde{q}\tilde{p}}$ which, by construction, is locally an orthogonal projection, which is smooth because it coincides locally with the orthogonal projection of lemma \ref{lem:proj-locale} — here we use the fact that the radius of injectivity of the leaves with the Poincaré-type metric is uniformly bounded away from zero.
					
					We prove the last part of the lemma. Choose $z' \in B_{g}(z, R')$. Consider the geodesic $\gamma_1$ from $\tilde{q}$ to $z'$, and the path $\gamma_2$ consisting of the geodesic from $\tilde{q}$ to $z$ and then the geodesic from $z$ to $z'$. 
					Consider the projection obtained by the claim using the geodesic from $\tilde{q}$ to $z$, and then that obtained by the claim from the geodesic between $z$ and $z'$. Concatenating the associated data as in definition \ref{lem:implicit} using lemma \ref{lem:proj-unique-continuation}, we obtaine a projection of $\tilde{q}$  along $\gamma_1$ starting from $p$. Using the claim applied to $\gamma_2$, we obtain a projection of $\tilde{q}$ along $\gamma_2$ starting from $p$. For an appropriate value of $C$, using the lemma \ref{lem:proj-invariance-homotopie} and the elementary lemma below, we obtain the compatibility condition on $\Phi$. 
				\end{proof}
				
				\begin{lem}
					There exists a constant $K > 0$ such that the following holds:
					
					Let $a, b, c$ be three points in a leaf with the Poincaré-type metric  (recall that the Poincaré-type metric is comparable to a hyperbolic metric).
					Consider $\gamma$ the geodesic from $a$ to $c$, and $\gamma'$ the broken geodesic from $a$ to $b$ and then $b$ to $c$.
					Then there exists a homotopy with fixed endpoints between $\gamma$ and $\gamma'$ through piecewise geodesic paths of length at most $K\left(\dist(a, b) + \dist(b, c) + 1\right)$.
					
				\end{lem}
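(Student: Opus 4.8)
The plan is to establish the elementary comparison lemma about homotopies between a geodesic $\gamma$ from $a$ to $c$ and the broken geodesic $\gamma'$ going $a \to b \to c$, through piecewise geodesic paths of controlled length. First I would work in the universal cover $\tilde{L}$ of the leaf, which, since the Poincaré-type metric has uniformly bounded geometry and is comparable to a metric of curvature pinched between two negative constants (as recorded in the remark after the bounded geometry definition), is a Hadamard manifold with curvature bounded away from $0$ on both sides. Lifting $a$, $b$, $c$ and the two paths, I want to produce the homotopy by a straight-line-type interpolation adapted to the nonpositive curvature setting.

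The concrete construction I would carry out is the following. Fix the geodesic $\sigma$ from $a$ to $b$ and for each $s \in [0,1]$ let $b_s = \sigma(s)$ be the point at parameter $s$ along $\sigma$ (so $b_0 = a$, $b_1 = b$); define $\gamma_s$ to be the broken geodesic from $a$ to $b_s$ (along $\sigma$) and then from $b_s$ to $c$. At $s = 0$ this is the geodesic from $a$ to $c$ traversed after a trivial segment, i.e. essentially $\gamma$; at $s = 1$ it is $\gamma'$. Each $\gamma_s$ is piecewise geodesic, and by the triangle inequality its length is at most $\dist(a,b_s) + \dist(b_s, c) \leq \dist(a,b) + \big(\dist(b_s,b) + \dist(b,c)\big) \leq 2\dist(a,b) + \dist(b,c)$, which is bounded by $K\big(\dist(a,b) + \dist(b,c) + 1\big)$ with $K = 2$. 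Continuity of $s \mapsto \gamma_s$ (after a fixed reparametrisation of each leg to $[0,1/2]$ and $[1/2,1]$, say) follows from smooth dependence of geodesics on their endpoints in a complete manifold of bounded geometry. Finally, to connect $\gamma_0$ to the actual geodesic $\gamma$ from $a$ to $c$, I note that $\gamma_0$ is a trivial segment at $a$ followed by the $a$-to-$c$ geodesic, and this is homotopic to $\gamma$ by the obvious reparametrisation homotopy, which only shortens paths; absorbing this into the family and adjusting $K$ (say to $K = 3$) takes care of the endpoint. Pushing the whole homotopy back down to $L$ gives the claim.

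The main technical point, and the only place any care is needed, is continuity of the homotopy in $s$ through piecewise geodesic paths: this uses that in a complete Riemannian manifold the geodesic joining two points depending continuously on a parameter can be chosen to depend continuously on that parameter. In the universal cover this is immediate since between any two points there is a unique geodesic depending smoothly on the endpoints (Cartan–Hadamard), and uniform bounded geometry gives the needed uniformity of the constant $K$ across leaves. I do not expect genuine obstacles here; the lemma is deliberately soft, which is why the paper defers it as ``the elementary lemma below''.
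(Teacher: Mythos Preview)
The paper gives no proof of this lemma at all; it is stated as ``the elementary lemma below'' and left to the reader. So there is nothing to compare against, and your sliding-corner construction $s\mapsto\gamma_s$ (broken geodesic through $b_s$ on the segment $[a,b]$) is a perfectly natural way to realise it. The length bound and the endpoint reparametrisation are fine.

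One point does need correcting. You assert that the universal cover with the Poincar\'e-type metric $g$ is a Hadamard manifold ``with curvature bounded away from $0$ on both sides'', citing the remark after the bounded-geometry definition. That remark only says the curvature of $g$ is bounded above and below by constants depending on $M_2$; it does not say these constants are negative. The metric $g$ is merely bi-Lipschitz equivalent to the Poincar\'e metric $g_P$, and bi-Lipschitz equivalence does not transfer the sign of curvature. So you cannot conclude that $g$-geodesics in $\tilde L$ are unique or depend continuously on their endpoints, and your interpolation, as written, may fail to be continuous.

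The fix is immediate and keeps your argument intact: run the interpolation in $(\tilde L,g_P)$ instead, which genuinely is the hyperbolic disk (Cartan--Hadamard applies). Your family $\gamma_s$ is then through piecewise $g_P$-geodesics of $g_P$-length at most $2\dist_{g_P}(a,b)+\dist_{g_P}(b,c)$, hence of $g$-length at most a fixed constant times $\dist_g(a,b)+\dist_g(b,c)$ by the comparability $C^{-1}\le g/g_P\le C$. This is all the paper actually needs: in the proof of the projection lemma where this is invoked, the claim being applied only requires a bound on the length of each path in the homotopy, not that the pieces be $g$-geodesics. If you want to be literal about ``piecewise geodesic'' for $g$, you can further subdivide each $g_P$-segment into arcs shorter than the uniform injectivity radius $\rho_0$ of $g$ and replace each arc by the unique short $g$-geodesic between its endpoints; continuity in $s$ then follows from smooth dependence of short geodesics on their endpoints in bounded geometry.
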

				
				We isolate the following straightforward consequence of the proof of lemma \ref{lem:proj-globale} (see also Nguyên \cite[proposition 3.3]{Nguyen}):
				\begin{lem}
					There exists a constant $C > 0$ such that for every continuous path $\gamma$ in $M \setminus S$, with lift $\tilde{\gamma}$ to its universal cover, and every $t \geq 0$, $\ell(\gamma(t)) \leq \ell(\gamma(0))\exp\left(C\dist(\tilde{\gamma}(0),\tilde{\gamma}(t)\right)$.
					\label{lem:ell-sing}
				\end{lem}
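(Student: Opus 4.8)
The plan is to extract the desired inequality from the inductive construction in the proof of Lemma \ref{lem:proj-globale}, which already tracks how $\ell$ grows along a piecewise geodesic path. First I would reduce to the case of a unit-speed geodesic segment $\gamma$ of small length $d_0$; the general case then follows by iterating and multiplying the bounds, since $\dist_P(\gamma\,:\,0,t)$ is super-additive under concatenation (up to the comparability constant between $g$ and $g_P$). Actually, since the statement is only about the intrinsic quantity $\ell$ along a continuous path and makes no reference to projections, the cleanest approach is to work directly with the Poincaré-type metric: cover $\gamma$ by a chain of points $p_0 = \gamma(0), p_1, \dots, p_N = \gamma(t)$ with consecutive $g$-distances of size $\eta\,\ell(p_{k})^{-1}$ for a small constant $\eta$, exactly as in the claim inside the proof of Lemma \ref{lem:proj-globale}.

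The key estimate is Lemma \ref{lem:dist-sing}: whenever two points $p, q$ in the same leaf satisfy $\dist_g(p,q) \leq \eps\,\ell(p)^{-1}$, we have $\abs{\ell(q) - \ell(p)} \lesssim \ell(p)\dist_g(p,q)$, hence $\ell(q) \leq (1 + C'\eta)\ell(p)$ for the chosen step size. Applying this along the chain gives $\ell(\gamma(t)) = \ell(p_N) \leq (1 + C'\eta)^N \ell(p_0)$. It remains to bound $N$: since each step advances the $g$-arclength by at least $\eta\,\ell(p_k)^{-1} \geq \eta\,\ell(\gamma(t))^{-1}$... — but this is circular, so instead I would run the chain construction adaptively, stopping and restarting, and bound $N$ in terms of the total $g$-length $L$ of $\gamma$ together with the running values of $\ell$. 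Concretely, grouping the steps so that $\ell$ at most doubles within each group, a group of $\lfloor 1/(C'\eta)\rfloor$ steps has total $g$-length $\gtrsim \eta\,\ell^{-1}$ where $\ell$ is the value at the start of the group, so if there are $m$ groups then $\ell$ grows by a factor $\leq 2^m$ while consuming $g$-length $\gtrsim \sum_{j} \ell_j^{-1}$; since $\ell_j \leq \ell_0 2^{j}$, the total length is $\gtrsim \ell_0^{-1}(1 - 2^{-m})$, which does not immediately control $m$.

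The honest fix — and this is the one actually used in the cited Nguyên \cite[proposition 3.3]{Nguyen} and implicit in Lemma \ref{lem:proj-globale} — is a differential-inequality argument: along a unit-speed $g_P$-geodesic $\gamma$, the function $s \mapsto \log\ell(\gamma(s))$ is (Lipschitz and) satisfies $\abs{\tfrac{d}{ds}\log\ell(\gamma(s))} \leq C$ for a constant depending only on the geometry, because by Proposition \ref{prop:typepoincare} and Lemma \ref{lem:geombornee} the $g_P$-gradient of $\log\dist(\cdot,S)$ is bounded (near a singularity, in the local model, $\log\dist(\cdot,S) \asymp \log\norm{p}$ and the Poincaré-type metric is $\asymp \norm{p}^2\ell^2$ times the Euclidean one, so $\nabla^{g}\log\norm{p}$ has $g$-norm $\asymp \ell^{-1} \leq 1$, and away from singularities everything is bounded by compactness). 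Integrating this over $[0, \dist_P(\gamma\,:\,0,t)]$ gives $\log\ell(\gamma(t)) - \log\ell(\gamma(0)) \leq C\,\dist_P(\gamma\,:\,0,t)$, which is the claim; a general continuous path is handled by approximating by piecewise $g_P$-geodesics and noting the estimate passes to limits since it only involves the endpoints and the length. The main obstacle is precisely making the bound on $\abs{\tfrac{d}{ds}\log\ell}$ uniform near the singular set, and for this the decisive input is the local description of the Poincaré-type metric established in the proof of Lemma \ref{lem:geombornee} together with Proposition \ref{prop:typepoincare}; away from $S$ it is a triviality by compactness.
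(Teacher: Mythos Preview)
Your differential-inequality approach is sound, but the parenthetical computation contains two slips that you should fix. First, the Poincaré-type metric is $g \asymp \norm{p}^{-2}\ell^{-2}$ times the Euclidean one, not $\norm{p}^{2}\ell^{2}$ times (see the formula $g_p(V) = \norm{X(p)}^{-2}\ell(p)^{-2}\norm{V}^2$ in the proof of Lemma~\ref{lem:geombornee}). Second, and more importantly, with the correct factor one gets $\abs{\nabla^{g}\log\norm{p}}_{g} \asymp \ell$, not $\ell^{-1}$: a unit $g$-step corresponds to Euclidean displacement $\asymp \norm{p}\ell$, over which $\log\norm{p}$ varies by $\asymp \ell$. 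What you actually need is the $g$-gradient of $\log\ell$, not of $\log\dist(\cdot,S)$; since $\ell = -\log\norm{p}$ near the singularity, $\nabla\log\ell = -\ell^{-1}\nabla\log\norm{p}$, whose $g$-norm is then $\asymp 1$. With this correction your integration argument goes through and gives the lemma.

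For comparison, the paper does not use the differential inequality; it extracts the bound directly from the discrete chain construction you abandoned. The point you missed there is that Lemma~\ref{lem:dist-sing} with step size $\eta\,\ell(p_k)^{-1}$ gives the \emph{additive} bound $\abs{\ell(p_{k+1}) - \ell(p_k)} \lesssim \eta$ (a constant independent of $\ell$), not merely the multiplicative $(1+C'\eta)$ you wrote. Hence $\ell(p_n) \leq \ell(p_0) + C\eta n$, so in a block of $\asymp \ell(p_0)$ steps $\ell$ at most doubles while the $g$-length covered is $\gtrsim \ell(p_0)\cdot\eta\,(2\ell(p_0))^{-1}$, a fixed constant $d_0$. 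Iterating over $\asymp R/d_0$ blocks then yields $\ell \leq 2^{R/d_0}\ell(p_0) = e^{CR}\ell(p_0)$. This is exactly the mechanism in the claim inside the proof of Lemma~\ref{lem:proj-globale}, and it is what the paper means by ``straightforward consequence''. Your grouping with a \emph{fixed} number $\lfloor 1/(C'\eta)\rfloor$ of steps per group instead produces blocks of $g$-length only $\asymp \ell^{-1}$, which is precisely why the resulting series $\sum_j \ell_j^{-1}$ telescopes without controlling $m$.
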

				\subsection{Local study near a singularity. }In this subsection, we prove technical estimates on local projections near a singularity, as well as the essential control of the holonomy on small steps. The part about projections is much easier if we assume that we have linearisable singularities, as it then simply follows from an homogeneity argument and compactness (see Dinh-Nguyên-Sibony \cite{DNSII}). The control of the holonomy also follows from such an argument in the  analogous setting of dimension $2$ (see Dupont-Deroin-Kleptsyn \cite{DDK}). We therefore advise the reader to skip the proofs. Note that similar arguments were also used by Bacher \cite{Bacher} in his study of the Poincaré metric in the presence of non-linearisable singularities.
				
				We use the notation of subsection \ref{sect:local}. We take $p \in P\setminus S$.
				\label{sub:etude-locale}

				Let $F = F_{p} \colon \Disk(c)^4\to \mathbb{C}$ for some small $c > 0$ be the function defined by
				\[F(t, u, \zeta, \xi) := \frac{1}{\norm{p}^2}\det\left(\phi^{\zeta}(q) - \phi^{\zeta + \xi}(p), N\left(\phi^{\zeta + \xi}(p)\right), \frac{\partial}{\partial z}\right)\]
				for $(t, u, \zeta, \xi) \in \Disk(c)^4$, where we write $q = p + tN(p) + u\norm{p}\frac{\partial}{\partial z}$ as a short-hand here and below; note that $\norm{q - p} \asymp \left(\abs{t} + \abs{u}\right)\norm{p}$. The map $F$ is smooth in all variables; it is holomorphic with respect to $t$ and $u$ but not $\zeta$ and $\xi$ since the map $N$ is anti-holomorphic. We will apply to $F$ the following simple quantitative form of the implicit function theorem (see for instance Barreira-Pesin \cite[lemma 7.5.2]{BarreiraPesin}).
				\begin{lem}[Quantitative implicit function theorem]Consider a $C^2$ map\begin{align*}F &\colon B_{\R^m}(0, 1) \times B_{\R^n}(0, 1) \to \R^n\\ &(x, y) \mapsto F(x, y).\end{align*}Assume that $\deriv_yF(0, 0)$, the partial differential of $F$ with respect to the variable $y$, is invertible. Let $C > 0$ be such that we have the following bounds on the partial differentials: \[\norm{\deriv_xF}_\infty, \norm{\deriv^2_{yy}F}_\infty, \norm{\deriv^2_{xy}F}_\infty \leq C \text{ and }\norm{\deriv_yF(0, 0)^{-1}}_\infty \leq C.\]
					Then there exist $r_0 = r_0(C) \in (0, 1)$, $r_1 = r_1(C) \in (0, 1)$ \emph{depending only on $C$} and a $C^1$ map $g \colon B_{\R^m}(0, r_0) \to B_{\R^n}(0, r_1)$ such that
					for every $x \in B_{\R^m}(0, r_0)$, $F(x, g(x)) = 0$, and $g(x)$ is the only $y \in B_{\R^n}(0, r_1)$ such that $F(x, y) = 0$. 
					
					Moreover, for every $\lambda \geq 1$, $g\left(B\left(0, \frac{r_0}{\lambda}\right)\right) \subset B\left(0, \frac{r_1}{\lambda}\right)$.
					\label{lem:implicit}
				\end{lem}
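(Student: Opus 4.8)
The statement is the standard uniform (quantitative) implicit function theorem, so the natural route is to turn the equation $F(x,y)=0$ into a contraction problem with constants controlled only by $C$. Set $A:=\deriv_y F(0,0)$, which is invertible with $\norm{A^{-1}}_\infty\leq C$, and (replacing $C$ by $\max(C,1)$) assume $C\geq 1$. Note first that one implicitly needs $F(0,0)=0$: the last assertion forces $g(0)\in B(0,r_1/\lambda)$ for all $\lambda$, hence $g(0)=0$ and $F(0,0)=0$; this holds in the intended application (when $t=u=\xi=0$ the determinant defining $F$ vanishes), and I will use it. For $x\in B_{\R^m}(0,1)$ define $\Phi_x(y):=y-A^{-1}F(x,y)$ on $B_{\R^n}(0,1)$, so that $F(x,y)=0$ iff $y$ is a fixed point of $\Phi_x$.

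The first step is to make $\Phi_x$ a uniform contraction in $y$. Since $\deriv_y\Phi_x(y)=A^{-1}\bigl(\deriv_y F(0,0)-\deriv_y F(x,y)\bigr)$, integrating $\deriv^2_{xy}F$ and $\deriv^2_{yy}F$ along the segment from $(0,0)$ to $(x,y)$ gives $\norm{\deriv_y F(x,y)-\deriv_y F(0,0)}_\infty\leq C(\norm{x}+\norm{y})$, hence $\norm{\deriv_y\Phi_x(y)}_\infty\leq C^2(\norm{x}+\norm{y})$. Choosing $r_0,r_1\in(0,1)$ small enough that $C^2(r_0+r_1)\leq\frac12$ — which can be done with $r_0,r_1$ depending on $C$ alone — the map $\Phi_x$ is $\frac12$-Lipschitz on the (convex) ball $B(0,r_1)$ for every $x\in B(0,r_0)$.

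Next I check the self-map property. From $F(0,0)=0$ and $\norm{\deriv_x F}_\infty\leq C$ we get $\norm{F(x,0)}_\infty\leq C\norm{x}$, so $\norm{\Phi_x(0)}_\infty=\norm{A^{-1}F(x,0)}_\infty\leq C^2\norm{x}$. Shrinking $r_0$ further so that $4C^2 r_0\leq r_1$ (still depending only on $C$), for $y\in\overline{B}(0,r_1)$ we get $\norm{\Phi_x(y)}\leq\frac12 r_1+C^2 r_0\leq\frac34 r_1<r_1$, so $\Phi_x$ maps $\overline{B}(0,r_1)$ into $B(0,r_1)$. As $\overline{B}(0,r_1)$ is complete, the Banach fixed point theorem yields a unique $g(x)\in\overline{B}(0,r_1)$ with $\Phi_x(g(x))=g(x)$, and by the displayed bound $g(x)\in B(0,r_1)$; this is precisely the unique zero of $F(x,\cdot)$ in $\overline{B}(0,r_1)$, a fortiori the unique one in $B(0,r_1)$. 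For regularity, along the graph of $g$ one has $\norm{\deriv_y F(x,g(x))-A}_\infty\leq C(r_0+r_1)<1/C\leq 1/\norm{A^{-1}}$, so $\deriv_y F(x,g(x))$ is invertible by Neumann series, and applying the classical implicit function theorem locally around each $x\in B(0,r_0)$ shows $g$ is $C^1$ (in fact as smooth as $F$) with $\deriv g=-(\deriv_y F)^{-1}\deriv_x F$.

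Finally, the homogeneous estimate: for any $x\in B(0,r_0)$, using that $g(x)$ is a fixed point and $\Phi_x$ is $\frac12$-Lipschitz on $B(0,r_1)$, $\norm{g(x)}=\norm{\Phi_x(g(x))}\leq\norm{\Phi_x(g(x))-\Phi_x(0)}+\norm{\Phi_x(0)}\leq\frac12\norm{g(x)}+C^2\norm{x}$, whence $\norm{g(x)}\leq 2C^2\norm{x}$. Combined with $4C^2 r_0\leq r_1$, this gives $g\bigl(B(0,r_0/\lambda)\bigr)\subset B(0,r_1/\lambda)$ for every $\lambda\geq1$. There is no genuinely hard step here — the lemma is soft — and the only real care is the bookkeeping needed to make $r_0,r_1$ functions of $C$ alone; the one subtlety worth flagging is the implicit hypothesis $F(0,0)=0$, without which $g(0)$ would not vanish and the last clause would fail.
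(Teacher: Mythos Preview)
Your proof is correct and follows the standard contraction-mapping route: rewrite $F(x,y)=0$ as a fixed-point equation $\Phi_x(y)=y$, use the second-derivative bounds to make $\Phi_x$ a uniform $\tfrac12$-contraction on a ball of radius $r_1=r_1(C)$, use $\norm{\deriv_xF}_\infty\leq C$ and $F(0,0)=0$ to get the self-map property, and extract the linear estimate $\norm{g(x)}\leq 2C^2\norm{x}$ from the contraction inequality. All constants are tracked cleanly and depend only on $C$.

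The paper itself does not give a proof of this lemma: it is stated with a reference to Barreira--Pesin \cite[lemma~7.5.2]{BarreiraPesin}, which is exactly this Banach fixed-point argument, so your approach is the intended one. Your remark that the statement tacitly assumes $F(0,0)=0$ (forced by the last clause with $\lambda\to\infty$, and satisfied in the paper's application) is a useful observation.
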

				First, it is easy to check that one can bound the norms of
				$\deriv_\zeta F, \deriv_t F, \deriv_u F$
				and
				\[\deriv^2_{\xi\xi}F, \deriv^2_{\zeta\xi}F, \deriv^2_{t\xi}F, \deriv^2_{u\xi}F\]
				by constants. Actually, it will be useful later on to note that $\norm{F}_{\mathrm{C}^r} \lesssim_r 1$ for every non-negative integer $r$.
				
				Now we compute the differential $\deriv_\xi F$ of $F$ with respect to $\xi$:
				\begin{align*}
				\frac{\partial F}{\partial \xi} 
				&= -\frac{1}{\norm{p}^2}\det\left(X\left(\phi^{\zeta + \xi}(p)\right), N\left(\phi^{\zeta +  \xi}(p)\right),\frac{\partial}{\partial z}\right) \\
				\frac{\partial F}{\partial \overline{\xi}} 
				&= \frac{1}{\norm{p}^2}\det\left(\phi^{\zeta}(q) - \phi^{\zeta + \xi}(p), \left(\deriv N\right)_{\phi^{\zeta+ \xi}(p)}\left(X\left(\phi^{\zeta + \xi}(p)\right)\right),\frac{\partial}{\partial z}\right).\end{align*}
				
				Therefore
				\begin{align*}
				\frac{\partial F}{\partial \xi}(0, 0, 0, 0) = -\frac{\norm{X(p)}^2}{\norm{p}^2},
				\end{align*}
				while $\left(\partial F/\partial \overline{\xi}\right)(0, 0, 0, 0) = 0$. This implies that the differential $\partial_\xi F$ of $F$ with respect to $\xi$ is an invertible map, and $\norm{\partial_\xi F(0, 0, 0, 0)^{-1}}$ is bounded above by a constant. 
				Therefore, applying lemma \ref{lem:implicit}, we obtain constants $r_0, r_1 > 0$ and a map $\xi \colon \Disk(r_0)^3 \to \Disk(r_1)$. 
				
				To go further we need some more estimates. The first one follows by standard estimates on ordinary differential equations.
				\begin{claim}For $(t, u, \zeta, \xi) \in \Disk(c)^4$,
					\begin{align*}
					\norm{\phi^{\zeta}(q) - \phi^{\zeta + \xi}(p)}
					&\lesssim \left(\abs{\xi} + \abs{t} + \abs{u}\right)\norm{p}.
					\end{align*}
					\label{claim:borne-flot}
				\end{claim}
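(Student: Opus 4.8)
The plan is to estimate each of the two pieces $\phi^{\zeta}(q) - \phi^{\zeta}(p)$ and $\phi^{\zeta}(p) - \phi^{\zeta+\xi}(p)$ separately and combine them by the triangle inequality, since the claimed bound splits naturally along these lines: the term $(\abs{t}+\abs{u})\norm{p}$ should come from moving the base point from $p$ to $q$ at fixed time, while the term $\abs{\xi}\norm{p}$ should come from advancing the time from $\zeta$ to $\zeta+\xi$ at the fixed base point $p$.

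First I would control $\phi^{\zeta}(p) - \phi^{\zeta+\xi}(p)$. Writing this as $\int_0^1 \frac{\deriv}{\deriv s}\phi^{\zeta+s\xi}(p)\,\deriv s = \xi\int_0^1 X(\phi^{\zeta+s\xi}(p))\,\deriv s$, it suffices to bound $\norm{X}$ along the flow line. Near the singularity $X$ vanishes linearly, so $\norm{X(w)} \lesssim \norm{w}$ for $w$ in the chart, and since $\norm{\phi^{\zeta+s\xi}(p)} \asymp \norm{p}$ for $\abs{\zeta},\abs{\xi}\lesssim 1$ (by the standard estimate $\norm{p}e^{-C\abs{\eta}} \leq \norm{\phi^{\eta}(p)} \leq \norm{p}e^{C\abs{\eta}}$ already used in the proof of Lemma \ref{lem:geombornee}), we get $\norm{X(\phi^{\zeta+s\xi}(p))} \lesssim \norm{p}$, hence $\norm{\phi^{\zeta}(p) - \phi^{\zeta+\xi}(p)} \lesssim \abs{\xi}\norm{p}$.

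Next I would control $\phi^{\zeta}(q) - \phi^{\zeta}(p)$ at fixed time $\zeta$. Here I use that $q = p + tN(p) + u\norm{p}\tfrac{\partial}{\partial z}$, so $\norm{q-p} \asymp (\abs{t}+\abs{u})\norm{p}$, and write $\phi^{\zeta}(q) - \phi^{\zeta}(p) = \int_0^1 \deriv(\phi^{\zeta})_{p+\theta(q-p)}(q-p)\,\deriv\theta$. The point is that $\deriv\phi^{\zeta}$ is the solution of the linearised (variational) equation along the flow, whose generator is $\deriv X$; since $\deriv X$ is bounded (it is holomorphic on the chart, with value $\deriv X_{p_0}$ at the singularity), Gronwall gives $\norm{\deriv(\phi^{\zeta})_{w}} \lesssim e^{C\abs{\zeta}} \lesssim 1$ uniformly for $w$ in the chart and $\abs{\zeta}\lesssim 1$. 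Therefore $\norm{\phi^{\zeta}(q) - \phi^{\zeta}(p)} \lesssim \norm{q-p} \lesssim (\abs{t}+\abs{u})\norm{p}$. Adding the two estimates yields the claim.

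The main obstacle, and the only place requiring genuine care, is the first estimate: one must be sure that the ball of radius $\asymp \norm{p}$ (in which $\norm{X} \lesssim \norm{p}$ and $\norm{\phi^{\eta}(p)} \asymp \norm{p}$) is not escaped during the whole time interval $\abs{\zeta}+\abs{\xi}\lesssim 1$ — i.e. that $c$ can be chosen small enough, depending only on the geometry of the foliation, for all of these comparisons to hold simultaneously on $\Disk(c)^4$. This is exactly the kind of ``reduce slightly the neighbourhood'' argument invoked in Subsection \ref{sect:local}, so once the chart is fixed with the requisite flow estimates the bookkeeping is routine; everything else is Gronwall and the triangle inequality.
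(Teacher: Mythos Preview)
Your argument is correct and is exactly the natural one: split via the triangle inequality, use $\norm{X(w)}\lesssim\norm{w}$ together with $\norm{\phi^{\eta}(p)}\asymp\norm{p}$ for $\abs{\eta}\lesssim 1$ to handle the time increment, and a Gronwall bound on the variational equation to handle the base-point increment. The paper states this claim without proof, treating it as an elementary consequence of the local flow estimates of Subsection~\ref{sect:local}, so your write-up is in line with what the author has in mind.
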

				\begin{claim}If $c$ is small enough, for every $(t, u, \zeta, \xi) \in \Disk(c)^4$, \[\norm{\left(\deriv_\xi F\right)_{(t, u, \zeta, \xi)}^{-1}} \lesssim 1.\]\label{claim:inverse-implicite}
				\end{claim}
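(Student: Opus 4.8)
The plan is to prove Claim \ref{claim:inverse-implicite} by a continuity/perturbation argument, using the fact established just above that $\left(\deriv_\xi F\right)_{(0,0,0,0)}^{-1}$ has norm bounded by a constant. The key point is that the quantity we want to invert depends nicely on the point, and near the base point it is a small perturbation of its value there, so its inverse stays controlled. First I would recall the exact formula
\[
\frac{\partial F}{\partial \xi}(t,u,\zeta,\xi) = -\frac{1}{\norm{p}^2}\det\left(X\left(\phi^{\zeta+\xi}(p)\right), N\left(\phi^{\zeta+\xi}(p)\right), \frac{\partial}{\partial z}\right),
\]
and observe that, since $\deriv_\xi F$ is complex-linear in the variation of $\xi$ (it is the genuine holomorphic-type derivative; the anti-holomorphic part $\partial F/\partial\overline\xi$ is a separate term), it is multiplication by the scalar $-\norm{p}^{-2}\det(X(\phi^{\zeta+\xi}(p)), N(\phi^{\zeta+\xi}(p)), \partial_z)$, and inverting it amounts to bounding this scalar away from zero.

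The core estimate is therefore: for $(t,u,\zeta,\xi)\in\Disk(c)^4$,
\[
\left|\det\left(X\left(\phi^{\zeta+\xi}(p)\right), N\left(\phi^{\zeta+\xi}(p)\right), \frac{\partial}{\partial z}\right)\right| \gtrsim \norm{p}^2.
\]
To see this I would first note that at $\xi = \zeta = 0$ we have $\phi^{\zeta+\xi}(p) = p$ and the determinant equals $\norm{X(p)}^2 \asymp \norm{p}^2$ (using the local model $X(x,y,0) = \alpha x\partial_x + \beta y\partial_y$ and $N(x,y,0) = \overline{\beta y}\partial_x - \overline{\alpha x}\partial_y$, which give $\det(X(p),N(p),\partial_z) = (|\alpha x|^2 + |\beta y|^2) \asymp \norm{p}^2$ since $\alpha,\beta \neq 0$). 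Then I would use the flow estimates of subsection \ref{sect:local} — reducing $c$ if necessary so that $|\zeta+\xi| \lesssim c$ keeps $\phi^{\zeta+\xi}(p)$ in the chosen neighbourhood — together with $\norm{\phi^{\zeta+\xi}(p)} \asymp \norm{p}$ (from $\norm{p}e^{-C|\zeta+\xi|} \leq \norm{\phi^{\zeta+\xi}(p)} \leq \norm{p}e^{C|\zeta+\xi|}$, which is $\asymp\norm{p}$ for $|\zeta+\xi|$ bounded). Homogeneity of $X$, $N$ and the determinant in the linearised model, together with smoothness and the fact that $\det(X(w),N(w),\partial_z)/\norm{w}^2$ is bounded below by a positive constant on the unit sphere, then gives the lower bound uniformly in $p$; the higher-order terms coming from the fact that $X$ is only approximately linear are absorbed by taking $c$ small, since they are $O(\norm{p}^3)$ relative to the main term $\asymp\norm{p}^2$.

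The main obstacle here is keeping all the estimates genuinely \emph{uniform in $p$}: the difficulty of the whole local analysis in this paper is precisely that the singularity is hyperbolic but not necessarily linearisable in dimension $3$, so one cannot just rescale. Concretely, the subtlety is to verify that the "error" in replacing $X$ near $p$ by its linear part, and similarly for $N$ and for the flow $\phi^{\zeta+\xi}$, is of strictly lower order than the main term $\norm{X(p)}^2 \asymp \norm{p}^2$ uniformly as $p \to 0$, which is what lets us choose the single constant $c$ independent of $p$. Once the lower bound $|\det(\cdots)| \gtrsim \norm{p}^2$ is in hand, the claim follows immediately: $\left(\deriv_\xi F\right)_{(t,u,\zeta,\xi)}^{-1}$ is multiplication by $-\norm{p}^2/\det(\cdots)$, which has modulus $\lesssim 1$. (The companion Claim \ref{claim:borne-flot} is even simpler: it follows from integrating $\frac{\deriv}{\deriv\zeta}\phi^\zeta = X\circ\phi^\zeta$, the bound $\norm{X(w)} \lesssim \norm{w} \lesssim \norm{p}$ near the singularity, and Gronwall, writing $\phi^\zeta(q) - \phi^{\zeta+\xi}(p)$ as the sum of the contribution of the initial displacement $\norm{q-p}\asymp(|t|+|u|)\norm{p}$, propagated by the flow, and the contribution of the time shift $\xi$, which is $\asymp|\xi|\,\norm{X}\lesssim|\xi|\norm{p}$.)
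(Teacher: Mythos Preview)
Your argument has a genuine gap: you misread what $\deriv_\xi F$ denotes. In this paper $F$ is explicitly \emph{not} holomorphic in $\xi$ (because $N$ is anti-holomorphic), and $\deriv_\xi F$ is the full \emph{real} differential of $F$ in the variable $\xi \in \C \cong \R^2$, i.e.\ the real-linear map $h \mapsto (\partial_\xi F)\,h + (\partial_{\overline\xi} F)\,\overline{h}$. This is exactly the object to which the quantitative implicit function theorem (Lemma~\ref{lem:implicit}) is applied. So it is not ``multiplication by a scalar'', and the $\partial_{\overline\xi} F$ term is not ``separate'': it is part of the map you must invert.

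Concretely, the paper's proof uses the formula $\det(\deriv_\xi F) = |\partial_\xi F|^2 - |\partial_{\overline\xi} F|^2$ for the real determinant. Your homogeneity/flow argument does give $|\partial_\xi F| \gtrsim 1$ on $\Disk(c)^4$ (essentially the paper's first estimate, phrased more geometrically), but that alone does not bound $\norm{(\deriv_\xi F)^{-1}}$: you must also show $|\partial_{\overline\xi} F|$ is small. The paper does this via Claim~\ref{claim:borne-flot}, since $\partial_{\overline\xi} F$ contains the factor $\phi^{\zeta}(q) - \phi^{\zeta+\xi}(p)$, yielding $|\partial_{\overline\xi} F| \lesssim |t|+|u|+|\xi| \leq 3c$. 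With both bounds, for $c$ small one gets $|\det(\deriv_\xi F)| \gtrsim 1$ and $\norm{\deriv_\xi F} \lesssim 1$, hence the claim. Your proposal omits the $\partial_{\overline\xi} F$ control entirely, so as written it is incomplete; the fix is precisely to insert this missing estimate.
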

				\begin{proof}
					We have $\norm{\left(\deriv_\xi F\right)^{-1}} \lesssim \det\left(\deriv_\xi F\right)^{-1}\norm{\deriv_\xi F}^2$. Moreover $\det(\deriv_\xi F) = \abs{\partial_\xi F}^2 - \abs{\partial_{\overline{\xi}}F}^2$. For $(t, u, \zeta, \xi) \in \Disk(c)^4$,
					\begin{align*}\abs{\left(\partial_\xi F\right)(t, u, \zeta, \xi) - \left(\partial_\xi F\right)(0, 0, 0, 0)} &\lesssim \abs{t} + \abs{u} + \abs{\zeta} + \abs{\xi},\\
					\abs{\left(\partial_{\overline{\xi}} F\right)(t, u, \zeta, \xi)} &\lesssim \abs{t} + \abs{u} + \abs{\xi},\end{align*}
					where we use claim \ref{claim:borne-flot} in the second bound. So for $c$ small enough $\abs{\det\left(\deriv_\xi F_{(t, u, \zeta, \xi)}\right)} \gtrsim 1$ and $\norm{\deriv_\xi F(t, u, \zeta, \xi)} \lesssim 1$, which implies the result.
				\end{proof}

				For every $\zeta \in \Disk(r_0)$, $F(0, 0, \zeta, 0) = 0$, so $\xi(0, 0, \zeta) = 0$. Moreover, reducing $r_0$ and $r_1$ if necessary (as per the last part of lemma \ref{lem:implicit}), for every $t, u \in \Disk(r_0)^2$,
				\begin{align*}
				\abs{\frac{\partial \xi}{\partial t}(t, u, \zeta)} &= \abs{\left(\deriv_\xi F\right)^{-1}_{(t, u, \zeta, \xi)} \left(\partial_t F \right)(t, u, \zeta, \xi)} \lesssim 1,
				\end{align*}
				with $\xi = \xi(t, u, \zeta)$. Similar bounds hold for the differential with respect to $u$. Therefore $\abs{\xi(t, u, \zeta)} \lesssim \abs{t} + \abs{u}$ for every $(t, u, \zeta) \in \Disk(r_0)^3$.
				
				Using the bound proved above we can also refine claim \ref{claim:borne-flot}:
				\begin{claim}There exists a constant $\kappaLemProjLocale > 0$ such that if $r_0 > 0$ is small enough, depending only on the geometry of the foliation, for every $(t, u) \in \Disk(r_0)^2$ and $\zeta \in \Disk(r_0)$, 
					\[\norm{\phi^{\zeta}(q) - \phi^{\zeta + \xi}(p)} \leq e^\kappaLemProjLocale \norm{q - p},\]where $\xi = \xi(t, u, \zeta)$.
					\label{claim:derivee-proj}
				\end{claim}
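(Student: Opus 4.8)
The plan is to substitute the bound on the implicit function $\xi$ into the crude estimate of Claim \ref{claim:borne-flot}, which is precisely why the present claim is a \emph{refinement} of that one. Just above, using Lemma \ref{lem:implicit}, the identity $F(0,0,\zeta,0)=0$ (so that $\xi(0,0,\zeta)=0$), the bound $\norm{\left(\deriv_\xi F\right)^{-1}}\lesssim 1$ of Claim \ref{claim:inverse-implicite}, and $\norm{\deriv_t F},\norm{\deriv_u F}\lesssim 1$, one obtains that the partial derivatives of $\xi$ in $t$ and $u$ are $\lesssim 1$ on $\Disk(r_0)^3$, and hence $\abs{\xi(t,u,\zeta)}\lesssim\abs{t}+\abs{u}$ there. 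I would first shrink $r_0$ so that $r_0<c$ and so that the image $\Disk(r_1)$ of $\xi$ lies inside $\Disk(c)$; this legitimises applying Claim \ref{claim:borne-flot} with the specific value $\xi=\xi(t,u,\zeta)$, yielding
\[
\norm{\phi^{\zeta}(q)-\phi^{\zeta+\xi}(p)}\lesssim\bigl(\abs{\xi(t,u,\zeta)}+\abs{t}+\abs{u}\bigr)\norm{p}\lesssim\bigl(\abs{t}+\abs{u}\bigr)\norm{p}.
\]

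Next I would feed in the elementary comparison $\norm{q-p}\asymp(\abs{t}+\abs{u})\norm{p}$, already noted when $q$ was introduced: since $q-p=t\,N(p)+u\norm{p}\,\partial_z$, since $\partial_z$ is orthogonal to the plane $P$ in the chart, and since $\norm{N(p)}\asymp\norm{p}$ near the singularity (from $N(x,y,0)=\overline{\beta y}\,\partial_x-\overline{\alpha x}\,\partial_y$ with $\alpha,\beta\neq 0$), the two summands and the difference $q-p$ all have comparable norms. Combining with the previous display gives $\norm{\phi^{\zeta}(q)-\phi^{\zeta+\xi}(p)}\leq C\norm{q-p}$ for a constant $C>0$ depending only on the geometry of the foliation, and one concludes by setting $\kappaLemProjLocale:=\max(\log C,1)$.

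I do not expect a genuine obstacle: the content is already packaged in Claims \ref{claim:borne-flot} and \ref{claim:inverse-implicite}, so the only thing to watch is the bookkeeping of the several shrinkings of $r_0$ (it must simultaneously validate Claim \ref{claim:borne-flot}, Claim \ref{claim:inverse-implicite}, Lemma \ref{lem:implicit} and the estimate $\abs{\xi}\lesssim\abs{t}+\abs{u}$, with $\Disk(r_1)\subset\Disk(c)$), together with checking that every implied constant depends only on the local normal form near the singularity --- i.e. on $\alpha$, $\beta$ and the chart --- hence only on the geometry. Should one also wish to reprove Claim \ref{claim:borne-flot} itself, it is a Gronwall estimate: writing $\phi^{\zeta+\xi}(p)=\phi^{\zeta}\bigl(\phi^{\xi}(p)\bigr)$, the curves $s\mapsto\phi^{s}(q)$ and $s\mapsto\phi^{s}(\phi^{\xi}(p))$ begin at distance $\lesssim\norm{q-p}+\abs{\xi}\norm{X(p)}\lesssim(\abs{t}+\abs{u}+\abs{\xi})\norm{p}$ (using $X(0)=0$, whence $\norm{X(p)}\lesssim\norm{p}$), and the flow is bi-Lipschitz with constant $\asymp 1$ on a fixed sub-neighbourhood for times $\abs{\zeta}\leq c$, so this distance stays $\lesssim(\abs{t}+\abs{u}+\abs{\xi})\norm{p}$ at time $\zeta$.
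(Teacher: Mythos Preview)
Your proposal is correct and follows exactly the approach the paper indicates: the paper does not spell out a proof but introduces the claim with ``Using the bound proved above we can also refine claim~\ref{claim:borne-flot}'', meaning one substitutes $\abs{\xi(t,u,\zeta)}\lesssim\abs{t}+\abs{u}$ into Claim~\ref{claim:borne-flot} and then uses $\norm{q-p}\asymp(\abs{t}+\abs{u})\norm{p}$. Your write-up simply makes this explicit, with the extra Gronwall paragraph being a pleasant but unnecessary bonus.
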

				
				We now use the charts defined in the paragraph on the Poincaré-type metric. Fix $(t, u) \in \Disk(r_0)^2$. We can construct a map sending $\phi^\zeta(q)$ to $\phi^{\zeta+\xi}(p)$, which can be defined in such way that lemma \ref{lem:proj-locale} holds, adjusting constants appropriately.

				We now prove some other statements we will use. The first will allow us to compare heat kernels on close leaves. As was noted above, $\norm{F}_{\mathrm{C}^r} \lesssim_r 1$ for every non-negative integer $r$. This and claim \ref{claim:inverse-implicite} imply, by an easy induction, that for every non-negative integer $r$, the map $\xi$ and all its differentials up to order $r$ are bounded by constants depending only on the geometry of the foliation and $r$. This implies:
				\begin{claim}The map $\pi$ of lemma \ref{lem:proj-locale} can be constructed in such a way that in the charts of the paragraph on the Poincaré-type metric, in which it corresponds to $\zeta \mapsto \zeta+\xi$ using the notations above, we have $\norm{\pi - \mathrm{id}}_{\mathrm{C}^r} \lesssim_r \ell(p)^r\norm{q - p}$.
				\end{claim}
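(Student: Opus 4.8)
The plan is to prove the claim directly from the quantitative implicit function theorem (lemma \ref{lem:implicit}) and the uniform bounds on $\xi$ and its derivatives that were just established. Recall that in the charts from the paragraph on the Poincaré-type metric, the local projection $\pi$ is realised as $\zeta \mapsto \zeta + \xi(t, u, \zeta)$, where $\zeta$ is the rescaled time $\zeta = \ell(p)\,\tau$ with $\tau$ the chart coordinate, and $(t, u)$ parametrise the transverse position of $q$ relative to $p$. Since $\norm{q - p} \asymp (\abs{t} + \abs{u})\norm{p}$ and $\norm{p} = \dist(p, S) \asymp \ell(p)^{-1}$ (up to the $\ell$-factor, but near a singularity $\norm{p} \asymp \dist(p,S)$ and $\ell(p) = -\log\dist(p,S)$, so this is really just $\abs{t} + \abs{u} \asymp \norm{q-p}/\norm{p}$), the parameters $t, u$ are themselves $\asymp \norm{q - p}/\norm{p} \asymp \ell(p)\norm{q-p}\cdot(\norm{p}\ell(p))^{-1}$; being more careful, $\abs{t} + \abs{u} \lesssim \norm{q-p}/\dist(p,S)$.

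First I would record, from the established uniform bounds, that $\xi$ and all its partial derivatives up to any fixed order $r$ (in $t$, $u$ and $\zeta$) are bounded by constants depending only on the geometry of the foliation and on $r$, and that $\xi(0, 0, \zeta) = 0$ identically in $\zeta$. Because $\xi$ vanishes when $(t,u) = (0,0)$, Taylor expansion in the $(t,u)$ variables gives $\norm{\xi(t, u, \cdot)}_{\mathrm{C}^r_\zeta} \lesssim_r \abs{t} + \abs{u} \lesssim \norm{q-p}/\dist(p,S)$, where the $\mathrm{C}^r_\zeta$ norm is taken in the $\zeta$-variable on the relevant disk. Next I would translate this $\zeta$-estimate into an estimate in the actual chart coordinate $\tau$. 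Since $\zeta = \ell(p)\tau$, each $\tau$-derivative costs a factor $\ell(p)$: for the $r$-th derivative, $\partial_\tau^r[\xi(t,u,\ell(p)\tau)] = \ell(p)^r (\partial_\zeta^r\xi)(t,u,\ell(p)\tau)$, which is bounded by $\ell(p)^r$ times the $\mathrm{C}^r_\zeta$ bound above, i.e.\ by $C_r\,\ell(p)^r\,\norm{q-p}/\dist(p,S)$. Finally, since $\dist(p, S) \asymp \norm{p} \asymp 1$ up to the constant — more precisely $\dist(p,S)$ is bounded below only by $\eps_0$ away from $0$, but near a singularity $\norm{q-p}/\dist(p,S)$ is exactly the natural scale-invariant quantity and in the statement we only need $\norm{\pi - \mathrm{id}}_{\mathrm{C}^r} \lesssim_r \ell(p)^r\norm{q-p}$, which follows because $\dist(p,S) \lesssim 1$. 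Putting $\pi(\tau) = \tau + \xi(t,u,\ell(p)\tau)$ so that $\pi - \mathrm{id} = \xi(t,u,\ell(p)\cdot)$, this yields exactly $\norm{\pi - \mathrm{id}}_{\mathrm{C}^r} \lesssim_r \ell(p)^r\,\norm{q - p}$ on the relevant disk.

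The only genuine subtlety — which I expect to be the main (though modest) obstacle — is bookkeeping the two competing scalings: the transverse rescaling by $\norm{p}$ relating $(t,u)$ to $\norm{q-p}$, and the tangential rescaling by $\ell(p)$ relating $\zeta$ to the Poincaré-type chart coordinate $\tau$ in which balls of radius $\asymp \ell(p)^{-1}$ around $p$ correspond to $\abs{\tau} \lesssim 1$. One must check that the implicit function $\xi$ is actually defined on a disk in $\zeta$ of radius $\gtrsim \ell(p)$, i.e.\ on $\tau$ of radius $\gtrsim 1$, which is where the hypothesis $\dist(p,q) \le \eps_0\dist(p,S)$ (equivalently $\abs{t}+\abs{u} \le \eps_0$) of lemma \ref{lem:proj-locale} is used to stay in the domain $\Disk(r_0)^3$ of lemma \ref{lem:implicit} after this rescaling — this is exactly the construction already carried out above, so it is available. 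Granting that, the chain-rule factor $\ell(p)^r$ and the linear-in-$(t,u)$ vanishing of $\xi$ combine to give the claimed bound with no further work, so this is a routine consequence of what precedes rather than a new difficulty.
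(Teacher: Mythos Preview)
Your approach is the same as the paper's --- bound all derivatives of $\xi$ uniformly via the implicit function theorem (using $\norm{F}_{\mathrm{C}^r}\lesssim_r 1$ and claim~\ref{claim:inverse-implicite}), use $\xi(0,0,\zeta)\equiv 0$ and Taylor in $(t,u)$ to extract a factor $\abs{t}+\abs{u}$, then account for the chart rescaling --- but your final step is wrong. You correctly reach a bound of the form $\lesssim_r \ell(p)^r\norm{q-p}/\dist(p,S)$ and then claim this gives $\lesssim_r \ell(p)^r\norm{q-p}$ ``because $\dist(p,S)\lesssim 1$''. The inequality goes the wrong way: $\dist(p,S)\lesssim 1$ means $1/\dist(p,S)\gtrsim 1$, so dividing by $\dist(p,S)$ makes the bound \emph{larger}, not smaller. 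Near a singularity $\norm{q-p}/\dist(p,S)$ is of order $\eps_0$ (this is exactly the hypothesis $q\in S(p,\eps_0\dist(p,S))$), while $\norm{q-p}\to 0$; so the gap is real, not just cosmetic.

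There is also some confusion in your bookkeeping of the charts. In the Poincaré-type chart $\tau\mapsto\phi^{\ell(p)\tau}(p)$ the metric $g$ is $\asymp$ Euclidean in $\tau$, so a $g$-ball of radius $R$ corresponds to $\abs{\tau}\asymp R$; the domain in lemma~\ref{lem:proj-locale}, a $g$-ball of radius $r_0/\ell(p)$, therefore corresponds to $\abs{\tau}\lesssim\ell(p)^{-1}$ and hence $\abs{\zeta}=\ell(p)\abs{\tau}\lesssim 1$ --- not $\abs{\zeta}\lesssim\ell(p)$ as you write. In particular $\xi$ is only defined for $\abs{\zeta}\leq r_0$, a fixed disk, and no extension is needed. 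Relatedly, in the $\tau$-coordinate one has $\pi(\tau)-\tau=\xi(t,u,\ell(p)\tau)/\ell(p)$, not $\xi(t,u,\ell(p)\tau)$, so you lose one power of $\ell(p)$. The paper's own proof leaves the last implication entirely unexplained; the bound that actually falls out of the argument is $\lesssim_r\ell(p)^{r}(\abs{t}+\abs{u})\asymp\ell(p)^{r}\norm{q-p}/\dist(p,S)$, and your attempt to remove the $1/\dist(p,S)$ does not work.
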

				
				Now we give statements related to the control of the holonomy, an essential input for the proof of the main theorem. For $p' \in \Disk(c)^2$ and $(t,u) \in \Disk^2(c)$, write $n_{p'}(V) = p' + tN(p') + u\norm{p'}\frac{\partial}{\partial z}$.
				Let $(t, u) \in \Disk^2(c)$. Given $p' \neq 0$ in the leaf $L_p$ with $\dist_g(p, p') \lesssim 1$, we can write $p' = \phi^\eta(p)$ for some $\eta \in \C$ with $\abs{\eta} \lesssim 1$. Then adjusting constants, there exists $\zeta$ with $\abs{\zeta} \lesssim 1$ such that $\eta = \zeta + \xi(t, u, \zeta)$. We can consider the map $h \colon (t, u) \mapsto h(t, u)$ such that $n_{p'}(h(t, u)) = \phi^{\eta - \xi}(n_p(t, u))$, which exists by the definition of $\xi$. Moreover
				\begin{align*}
				\norm{n_{p'}(h(t,u)) - p'} 
				&= \norm{\phi^{\eta - \xi}(n_p(t, u)) - \phi^\eta(p)}\\
				&\lesssim (\abs{t} + \abs{u})\norm{p}\\
				&\asymp \norm{p}\norm{n_p(t, u) - p}.
				\end{align*}
				But we also have $ \norm{n_{p'}(h(t,u)) - p'} \asymp \norm{p'}\norm{h(t,u)}$. Since $\norm{p'} \gtrsim \norm{p}$, we deduce that $\norm{h(t,u)} \lesssim \abs{t} + \abs{u}$. But the map $(t, u) \mapsto h(t, u)$ is the holonomy from $p$ to $p'$ in the parametrisation given by $n_p$ and $n_{p'}$. So we have the following important fact:
				\begin{lem}[Controlling the holonomy]
					There exists a small constant $\rho > 0$ and a large constant $ M > 0$ such that the following holds:
					
					Given a regular point $p \in P$ and $p' \in L_p$ with $\dist_g(p, p') \lesssim \ell(p)^{-1}$, the holonomy from $p$ to $p'$ is well defined on $S(p, \rho\dist(p,S))$  to $S(p', M\dist(p', S))$, and in the parametrisation given above, it gives a family of holomorphic diffeomorphisms on their image fixing $0$, which is well-defined on a ball of radius $\gtrsim 1$ and which is bounded in the uniform ($C^0$) norm, depending only on the geometry of the foliation, and not on $p$ and $p'$.
					\label{lem:controle-hol}
				\end{lem}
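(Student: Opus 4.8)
The plan is to recognise that the map $h$ built in the paragraph immediately preceding the statement \emph{is} the desired holonomy, once expressed in the parametrisations $n_p$ of $S(p,\cdot)$ and $n_{p'}$ of $S(p',\cdot)$, and then simply to read off the quantitative content from the bounds on $\xi$ already established. Since the holonomy here is local and robust (the hypothesis $\dist_g(p,p')\lesssim\ell(p)^{-1}$ keeps the whole transport inside one flow box, so it is path-independent), I would split into the usual two cases: when $p$ stays away from $S$ one has $\ell(p)\asymp 1$ and the statement is the classical boundedness of local holonomy, obtained by compactness; so I focus on $p$ close to a singularity and work in the chart of subsection \ref{sect:local}, with $P=\{z=0\}$ and $X|_P=\alpha x\partial_x+\beta y\partial_y$, where plaques are exactly the orbits of the flow $\phi^\zeta$. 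By the remark after Lemma \ref{lem:geombornee} (the set $\{\phi^\zeta(p):|\zeta|\lesssim 1\}$ is comparable to a $g$-ball of radius $\asymp\ell(p)^{-1}$), the hypothesis lets me write $p'=\phi^\eta(p)$ with $|\eta|\lesssim 1$, and, choosing $\rho$ small and shrinking the implied constant in $\dist_g(p,p')\lesssim\ell(p)^{-1}$, I may assume the geodesic from $p$ to $p'$ and the transversal piece $S(p,\rho\dist(p,S))$ lie inside a single flow box.

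Next I identify $h$ with the holonomy. Sliding a point $q=n_p(t,u)\in S(p,\rho\dist(p,S))$ in plaques along $s\mapsto\phi^s(p)$, $s\colon 0\to\eta$, amounts to following $\zeta\mapsto\phi^\zeta(q)$ until it meets $S(p')$, which, by the definition of $F$ and $\xi$, occurs exactly at the $\zeta$ with $\zeta+\xi(t,u,\zeta)=\eta$; there $\phi^\zeta(q)=\phi^{\eta-\xi}(n_p(t,u))$ is the point of the plaque of $q$ whose orthogonal projection onto $L_p$ is $p'$, i.e. a point of $S(p')$, with $n_{p'}$-coordinate $h(t,u)$. I need that $\zeta+\xi(t,u,\zeta)=\eta$ has a unique small solution $\zeta(t,u)$: this holds because $\xi(0,0,\zeta)\equiv 0$ forces $\partial_\zeta\xi(0,0,\zeta)=0$, hence $|\partial_\zeta\xi|\lesssim\rho$ for $|t|,|u|\lesssim\rho$, so $\zeta\mapsto\zeta+\xi(t,u,\zeta)$ is $\mathrm{C}^1$-close to the identity — this is the "adjusting constants" in the construction. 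Note also $h(0,0)=0$, since $\xi(0,0,\cdot)\equiv 0$ gives $\zeta=\eta$ and $\phi^\eta(p)=p'=n_{p'}(0)$.

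The quantitative claims then follow as in the displayed computation: $\|n_{p'}(h(t,u))-p'\|=\|\phi^{\eta-\xi}(q)-\phi^\eta(p)\|\lesssim(|t|+|u|)\|p\|$ by Claim \ref{claim:borne-flot} together with $|\xi|\lesssim|t|+|u|$, while $\|n_{p'}(h(t,u))-p'\|\asymp\|h(t,u)\|\,\|p'\|$ with $\|p'\|\gtrsim\|p\|$ (as $p'=\phi^\eta(p)$, $|\eta|\lesssim 1$), whence $\|h(t,u)\|\lesssim|t|+|u|$. Taking $\rho$ small enough that $S(p,\rho\dist(p,S))$ corresponds to $(t,u)$ in the domain of the construction, and $M$ large enough to absorb the implied constants and the comparison $\dist(p',S)\asymp\|p'\|$, I get that the holonomy maps $S(p,\rho\dist(p,S))$ into $S(p',M\dist(p',S))$, and in the $n_p,n_{p'}$ coordinates — which already normalise by $\|p\|$ and $\|p'\|$ — its domain is a ball of radius $\gtrsim 1$. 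For the uniform boundedness of $\{h\}_{p,p'}$ in $\mathrm{Diff}(\C^2,0)$ I would combine: $\xi$ and all its derivatives up to any fixed order $r$ are bounded (already established from $\|F\|_{\mathrm{C}^r}\lesssim_r 1$, Claim \ref{claim:inverse-implicite} and induction), $\phi$ and its derivatives are bounded in the chart, and $\zeta(t,u)$ has bounded derivatives (implicit function theorem with uniform bounds); composing gives $\|h\|_{\mathrm{C}^r}\lesssim_r 1$ uniformly in $p,p'$, and $h$ is a diffeomorphism onto its image because it is a holonomy map (equivalently, $dh_0$ is the linear holonomy, bounded above and, by running the same argument for the reverse holonomy, below, with $h-dh_0$ small).

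I expect the main obstacle to be the identification step: checking carefully that the orthogonal-projection equation $F=0$ really encodes the plaque-sliding holonomy and that the transport never leaves a single flow box, since this is exactly where the constraints "$\rho$ small", "$\dist_g(p,p')\lesssim\ell(p)^{-1}$ with small implied constant" and "$M$ large" are forced; the remaining regularity and size estimates are bookkeeping with bounds already in hand.
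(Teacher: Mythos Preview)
Your proposal is correct and follows essentially the same approach as the paper: the paper's ``proof'' is precisely the construction of $h$ in the paragraph preceding the lemma, together with the one-line remark afterwards that away from the singularities the statement is straightforward by compactness. You have correctly recognised this and spelled out several points the paper leaves implicit --- in particular the identification of $h$ with the holonomy via the equation $F=0$, the solvability of $\zeta+\xi(t,u,\zeta)=\eta$, and the invertibility of $h$. One minor difference: for the uniform boundedness in $\mathrm{Diff}(\C^2,0)$ you go through the $\mathrm{C}^r$ bounds on $\xi$, whereas the paper (in the proof of Proposition~\ref{prop:contraction}) simply uses that the holonomy is holomorphic and bounded on a fixed ball, then applies the Cauchy inequalities; either route works.
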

				Note that outside of a small neighbourhood of the singularities, the lemma is a straightforward consequence of compactness. We will also need the analogue of this lemma for longer paths:
				\begin{lem}
					Let $\gamma$ be a continuous path inside a leaf, starting at some point $p \in P \setminus S$, of length less than some $R > 0$. Let $\rho \in (0, \rhoSection)$  with 
					\[\exp\left(C\ell(p)e^{CR}\right)\rho \lesssim 1.\]
					Then the holonomy $h_\gamma$ along $\gamma$ is well-defined on $S(p, \rho)$ and for every $z_0, z_1 \in S(p, \rho)$, we have
					\[\frac{\dist\left(h_\gamma(z_0), h_\gamma(z_1)\right)}{\dist(p,S)} \leq C\exp\left(C\ell(p)e^{CR}\right)\frac{\dist(z_0, z_1)}{\dist(p,S)}.\]
					\label{lem:controle-hol2}
				\end{lem}
				The proof follows by induction from lemma \ref{lem:controle-hol}, along the lines of the proof of lemma \ref{lem:proj-globale} (indeed it is almost the same statement).
				
				Finally we also need the following lemma which allows to change transversals:
				\begin{lem}
					Let $p \in P \setminus S$ and $q \in M \setminus S$ with $\frac{\dist(p,q)}{\dist(p,S)} \lesssim 1$,
					and consider $h$ the holonomy at $q$ which sends a neighbourhood of $q$ in $S(p)$ to $S(q)$.
					Then if $z_0, z_1 \in S(p)$ with $\frac{\dist(z_0, q)}{\dist(q,S)}, \frac{\dist(z_1, q)}{\dist(q,S)} \lesssim 1$, we have
					\[\frac{\dist\left(h(z_0), h(z_1)\right)}{\dist(p,S)} \asymp \frac{\dist\left(z_0, z_1\right)}{\dist(q,S)}.\]
					\label{lem:changer-transversale}
				\end{lem}
				The proof follows from another simple application of the quantitative implicit function as above.
				\section{Negative Lyapunov exponents}
				\label{sect:negativite}
				In the rest of the article we consider $T$ a positive harmonic current directed by $\restr{\mathscr{F}}{P}$ given by theorem \ref{thm:fornaess-sibony}, and we denote by $\nu := T \wedge \mathrm{vol}_g$ the associated harmonic measure with the Poincaré-type metric $g$ on the leaves, normalising $T$ so that $\nu$ is a probability measure. 
				
				Let $\mathscr{M}$ be either $\mathscr{N} := N_{\restr{\mathscr{F}}{P}}$, the normal bundle of $\restr{\mathscr{F}}{P}$, or $\mathscr{L} := N_{P/M}$, the normal bundle to $P$ in $M$, both of which are holomorphic line bundles on $P \setminus S$, which can be extended to holomorphic line bundles on $P$ by Hartogs' phenomenon. On both these line bundles we can consider the hermitian metric induced by the ambient hermitian metric, which we denote by $\norm{\cdot}$. Because the foliation leaves the plane $P$ invariant, both these bundles are invariant by the holonomy. The aim of this section is to prove the following proposition:
				\begin{prop}Suppose that $M = \mathbb{P}^3(\C)$ and $P$ is a projective plane in $M$. Then  there exist $\lambda, \mu > 0$ such that for $\nu$-almost every $p \in P\setminus S$ and $W_p$-almost every path $\gamma \in \Gamma_p$,
					\begin{equation*}\lim_{t \longrightarrow \infty} \frac{1}{t}\log\norm{\restr{\deriv \left(h_\gamma^t\right)_{\gamma(0)}}{\mathscr{N}} \colon \mathscr{N}_{\gamma(0)} \to \mathscr{N}_{\gamma(t)} } = -\lambda < 0,
					\label{eq:lyapunovN}
					\end{equation*}
					\begin{equation*}\lim_{t \longrightarrow \infty} \frac{1}{t}\log\norm{\restr{\deriv \left(h_\gamma^t\right)_{\gamma(0)}}{\mathscr{L}} \colon \mathscr{L}_{\gamma(0)} \to \mathscr{L}_{\gamma(t)} } = -\mu < 0.
					\label{eq:lyapunovL}
					\end{equation*}
					\label{prop:lyapunov-negatif}
				\end{prop}
				We will follow the proof of Deroin-Dupon-Kleptsyn \cite{DDK}, although it is probably also possible to give an argument along the lines of Nguyên \cite{Nguyen}, replacing the explicit estimates near the singularities by implicit arguments along the lines of section \ref{sect:geometrie}. For our proof it will be useful to introduce two notions:
				\emph{Smooth metrics.} In addition to the induced metric, on $\mathscr{N}$, following Deroin-Dupont-Kleptsyn \cite{DDK}, we use also a "smooth metric": for every singular point $p_0$, there are holomorphic coordinates $(x,y,z)$ near $p_0$ such that $P$ is defined by $z = 0$, and the foliation is defined by a holomorphic vector field $X$ such that on $P$
				\[X = \alpha x\partial_x + \beta y \partial_y,\]
				where $\alpha, \beta \in \C$ are not colinear over $\R$. Then in these coordinates, we use the metric $\abs{\alpha x\deriv y - \beta y \deriv x}^2$ on $\mathscr{N}$. Note that the metric we use on $\mathscr{L}$ is given by $\abs{\deriv z}^2$ locally if the ambient hermitian metric is the usual metric in these coordinates, which we can assume, and this also gives a "smooth" metric. These metrics will be useful later on because when applied to local holomorphic sections of $\mathscr{N}$ or $\mathscr{L}$, they give harmonic functions near the singularities. We will denote them both by $\norm{\cdot}_s$ (so in the case of $\mathscr{L}$, $\norm{\cdot} = \norm{\cdot}_s$).
				
				\emph{Flat sections and the derivative of the holonomy. }For every $p \in P\setminus S$, and $s(p) \in \mathscr{M}_p$, we can consider for $p'$ close enough to $p$ in $P \setminus S$ the image $s(p')$ of $s(p)$ by the differential of the holonomy map between $p$ and $p'$, that is $\deriv\left(h_{pp'}\right)_p(s(p))$. The assignment $p' \mapsto s(p')$ defines a holomorphic section of $\mathscr{M}$ defined near $p$. We call such a section a \emph{flat section} near $p$ (it is a flat section for the Bott connection, a partial connection which can be used to differentiate in the direction of the foliation, but we will not need this). It is in fact uniquely determined by the value $s(p)$, wherever it is defined. Moreover, it is clear that a flat section can always be defined in a neighbourhood of a path in $P \setminus S$ inside a leaf.
				
				\begin{lem}Let $\gamma \colon [0, 1] \to L$ be a continuous path in a leaf $L$, and $\tilde{\gamma}$ its lift in a universal cover of a leaf $L\subset P$.  Then for all $t \in [0, 1]$,
					\[\log \norm{\restr{\deriv \left(h_\gamma^t\right)_{\gamma(0)}}{\mathscr{M}} \colon \mathscr{M}_{\gamma(0)} \to \mathscr{M}_{\gamma(t)}} \leq C\ell(\gamma(0))\exp\left(C\dist_g\left(\tilde{\gamma}(0), \tilde{\gamma}(t)\right)\right)\]
					for some constant $C > 0$ which depends only on the geometry of the foliation. Moreover, the same statement is true if we replace $\norm{\cdot}$ by $\norm{\cdot}_s$
					\label{lem:croissance-dhol}
				\end{lem}
				\begin{proof}
					Let $p_0 \in S$. Assume that in a neighbourhood $U$ of $p_0$ in $M$, there are holomorphic coordinates $(x, y, z)$ such that $P$ is locally given by $\{z = 0\}$. Assume that $\mathscr{F}$ is locally defined by a holomorphic vector field $X$ which, restricted to $P$, is given by $X(x, y, 0) = \alpha x \partial_x + \beta y\partial_y$ with $\alpha, \beta \in \C$ independent over $\R$.
					Therefore, in $M$ it can be written
					\[X(x, y, z) = \left(\alpha x + z\alpha'(x,y,z)\right) \partial_x + \left(\beta y + z\beta'(x, y, z)\right)\partial_y + z\gamma(x,y,z)\partial_z,\] for some holomorphic functions $\alpha', \beta', \gamma$. We assume that $\alpha', \beta', \gamma$ are bounded by some constant $C > 0$, as we may, slightly reducing the neighbourhood if necessary.
					
					Let $p \in P \cap U$ and consider the path $\gamma(t) := \phi^{t\zeta}(p)$ for some $\zeta \in \C$ with $\abs{\zeta} \lesssim 1$, where $(\phi^\zeta)$ is the flow of $X$. We consider $v \in \C^3$ with $v \in \left(N_{\restr{\mathscr{F}}{P}}\right)_p$ or $v \in \left(N_{P/M}\right)_p$ — in both cases, $\C v \perp X(p)$, and we want to estimate $\norm{\deriv \left(h_\gamma\right)_p(v)}_g$, for a certain choice of hermitian metric $g$.
					For every $v \in \left(\C X(p)\right)^{\perp}$ close enough to zero, there exists $\xi(v) \in \C$ with $\abs{\xi(v)} \lesssim 1$ such that \begin{equation}\left(\phi^{\xi(v)}(p + v) - \phi^{\zeta}(p)\right) \in \C N(\phi^{\zeta}(p)) + \C\partial_z.
					\label{eq:holonomie}\end{equation}
					In fact, by the implicit function theorem, as in section \ref{sect:geometrie}, the map $\xi$ is differentiable.
					
					\begin{align*}
					\deriv\left(h_\gamma\right)_p(v) 
					&= \restr{\frac{\deriv}{\deriv t}}{t = 0} \phi^{\xi(tv)}(p + tv)\\
					&= \left(\frac{\partial}{\partial\zeta}\phi^\zeta(p)\right) \xi'(0) + \left(\frac{\partial}{\partial p}\phi^\zeta(p)\right)\cdot v \\
					&=  \xi'(0)X\left(\phi^\zeta(p)\right) + \left(\frac{\partial}{\partial p}\phi^\zeta(p)\right)\cdot v.
					\end{align*}
					Let us now estimate $u(\zeta) := \frac{\partial}{\partial p}\phi^\zeta(p)$. We have $u'(\zeta) = \left(\frac{\partial}{\partial_p}X\right)\left(\phi^\zeta(p)\right)$, and for every point $q \in P$ we can identify $\left(\frac{\partial}{\partial_p}X\right)(q)$ with:
					\[\left( {\begin{array}{ccc}
						\alpha & 0 & \alpha'(q) \\
						0 & \beta & \beta'(q) \\
						0 & 0 & \gamma(q) \\
						\end{array} } \right).\]
					From this we deduce that $\norm{u} \lesssim e^{C\abs{\zeta}}$ for $\abs{\zeta} \lesssim 1$, using standard estimates on ordinary differential equations.
					
					Let us now compute $\xi'(0)$. We have
					\begin{align*}
					0
					&= \det\left(\restr{\frac{\deriv}{\deriv t}}{t = 0} \phi^{\xi(tv)}(p + tv),N(\phi^{\zeta}(p)),\partial_z \right)\\
					&= \det\left(\xi'(0)X\left(\phi^\zeta(p)\right) + \left(\frac{\partial}{\partial p}\phi^\zeta(p)\right)\cdot v,N(\phi^{\zeta}(p)),\partial_z \right)\\
					&= \xi'(0)\det\left(X\left(\phi^\zeta(p)\right),N(\phi^{\zeta}(p)),\partial_z \right) + \det\left(\left(\frac{\partial}{\partial p}\phi^\zeta(p)\right)\cdot v,N(\phi^{\zeta}(p)),\partial_z \right)\\
					&= \xi'(0)\norm{X\left(\phi^\zeta(p)\right)}^2 + \det\left(\left(\frac{\partial}{\partial p}\phi^\zeta(p)\right)\cdot v,N(\phi^{\zeta}(p)),\partial_z \right)
					\end{align*}
					Therefore
					\[\abs{\xi'(0)}\norm{X\left(\phi^\zeta(p)\right)} \lesssim e^{C\abs{\zeta}}, \]
					and so $\norm{\deriv\left(h_\gamma\right)_p(v)} \lesssim e^{C\abs{\zeta}}\norm{v}$.
					
					To prove the same statement for the metric $\norm{\cdot}_s$, note that on $\mathscr{N}$, at a point $p$, we have $\norm{v}_s \asymp \norm{p}\norm{v}$ for $v\in\mathscr{N}_p$, so the result follows from the computations above (in fact we do not even need bounds on $\abs{\xi'(0)}$). For $\mathscr{L}$ as already noted, $\norm{\cdot} = \norm{\cdot}_s$ (and here again we do not need bounds on $\abs{\xi'(0)}$).
				\end{proof}
				
				Let $p \in P \setminus S$ be a regular point, and let $\gamma \in \Gamma_p$ be a path, and define
				\[H_{t}^{\mathscr{M}}(\gamma) := \log \norm{\restr{\deriv \left(h_\gamma^t\right)_{\gamma(0)}}{\mathscr{M}} \colon \mathscr{M}_{\gamma(0)} \to \mathscr{M}_{\gamma(t)}},\]
				\[\tilde{H}_{t}^{\mathscr{M}}(\gamma) := \log \norm{\restr{\deriv \left(h_\gamma^t\right)_{\gamma(0)}}{\mathscr{M}} \colon \mathscr{M}_{\gamma(0)} \to \mathscr{M}_{\gamma(t)}}_s.\]
				
				Now we need the following lemma which is similar to a fact used by Nguyên \cite[section 4]{Nguyen}, and depends on Nguyên's theorem (theorem \ref{thm:nguyen}).
				\begin{lem}
					For any $A \geq 0$,  the function $F = F_A$ defined by 
					\[F(\gamma) := \sup_{0 \leq t \leq 1} \ell(\gamma(t))\exp\left(A\dist_g(\tilde{\gamma}(0), \tilde{\gamma}(t))\right),\]
					for any $\gamma \in \Gamma$ with lift $\tilde{\gamma}$ to the universal cover, is integrable with respect to the measure $\overline{\nu}$.
					\label{lem:integrabilite}
				\end{lem}
				\begin{proof}
					By lemma \ref{lem:ell-sing}, we have $F(\gamma) \leq \sup_{0\leq t \leq 1}\ell(\gamma(0))\exp\left(B\dist_g(\tilde{\gamma}(0), \tilde{\gamma}(t))\right)$ for some real number $B$. By Nguyên's theorem \ref{thm:nguyen}, the function $\gamma \mapsto \ell(\gamma(0))$ is in $\Leb^1(\overline{\nu})$. So it suffices to prove that for every $p \in M \setminus S$, the map $\gamma \mapsto \sup_{0\leq t \leq 1}\exp\left(B\dist_g(\tilde{\gamma}(0), \tilde{\gamma}(t))\right)$ is integrable with respect to $W_p$, with bounds uniform in $p$. For this we can use the bound of lemma \ref{lem:saut-brownien} and some simple computations (see Nguyên \cite[section 4]{Nguyen} for details).
				\end{proof}
				Therefore by lemma \ref{lem:croissance-dhol}, we obtain the following:
				\begin{lem}
					The functions
					\[\gamma \in \Gamma \mapsto \sup_{0 \leq t \leq 1} \abs{H_{t}^{\mathscr{M}}(\gamma)},\]
					\[\gamma \in \Gamma \mapsto \sup_{0 \leq t \leq 1} \abs{\tilde{H}_{t}^{\mathscr{M}}(\gamma)} \]
					are in $\Leb^1(\overline{\nu})$.
				\end{lem}
				
				\begin{proof}[Proof of proposition \ref{prop:lyapunov-negatif}]
					By Birkhoff's pointwise ergodic theorem, because $\overline{\nu}$ is ergodic, this implies that for $\overline{\nu}$-almost every $\gamma \in \Gamma$, the limit \[\lim_{t\to\infty} \frac{1}{t}\log\norm{\restr{\deriv \left(h_\gamma^t\right)_{\gamma(0)}}{\mathscr{M}} \colon \mathscr{M}_{\gamma(0)} \to \mathscr{M}_{\gamma(t)}}\] exists and is equal to $\int_{\Gamma} H_{t}^{\mathscr{M}}(\gamma)\deriv\overline{\nu}(\gamma)$, and similarly for $\norm{\cdot}_s$.
					
					Moreover, we have the following cocycle property: for every $s, t \geq 0$, and every $\gamma$:
					\[H_{s + t}^{\mathscr{M}}(\gamma) = H_{s}^{\mathscr{M}}(\gamma) + H_{t}^{\mathscr{M}}(\sigma_s(\gamma)),\]
					and similarly for $\tilde{H}_t^\gamma$. Therefore, integrating and using the shift-invariance of $\overline{\nu}$, we obtain that the map
					\[t\mapsto \int_{\Gamma} H_{t}^{\mathscr{M}}(\gamma)\deriv\overline{\nu}(\gamma)\]
					is linear in $t$; since it is also measurable, assume that it is given by $t \mapsto \lambda_{\mathscr{M}} t$. The map
					\[t\mapsto \int_{\Gamma} \tilde{H}_{t}^{\mathscr{M}}(\gamma)\deriv\overline{\nu}(\gamma)\]
					is also linear in $t$; assume that it is given by $t \mapsto \lambda_{\mathscr{M}}' t$. Then because $\overline{\nu}$-almost every path leaves a neighbourhood of the singularities for arbitrarily large times, on the complement of which $\norm{\cdot} \asymp \norm{\cdot}_s$, we have $\lambda_{\mathscr{M}}' = \lambda_{\mathscr{M}}$.
					
					We will recover $\lambda_{\mathscr{M}}$ by taking the limit as $t \to 0$. For this it is better to work with the metric $\norm{\cdot}_s$. 
					Take $p \in P \setminus S$, $\tilde{p}$ a lift of $p$ in $\tilde{L_p}$, and $\pi_p \colon \tilde{L_p} \to L_p$ the universal covering. Consider $\sigma$ the local flat section in a neighbourhood of $p$ described above, and consider the function $\theta$ which is defined locally by $\Delta_g\log\norm{\sigma}_s$ (here $\Delta_g$ is the Laplace-Beltrami operator for the Poincaré-type metric $g$). Note that $\theta$ does not depend on the choice of $\sigma$, and can actually be extended globally to a function on the leaf. Moreover, by the construction of the metric $\norm{\cdot}_s$, $\theta$ is zero near the singularities.
					
					For any path $\gamma \in \Gamma_p$, $H_{t}^{\mathscr{M}}(\gamma)$ only depends on the homotopy class with fixed endpoints of $\gamma$, so there exists a smooth function $\phi \colon \tilde{L_p} \to \R$ such that for any lift $\tilde{\gamma}$ of $\gamma$ starting at $\tilde{p}$, we have $H_{t}^{\mathscr{M}}(\gamma) = \phi(\tilde{\gamma}(t))$. In fact, consider a small neighbourhood of $\tilde{p}$, which can be identified with a small neighbourhood of $p$. Then through this identification, $\phi$ is given by $\log\norm{\sigma}_s$, where $\sigma$ is a flat section near $p$.
					
					Now we write:
					\begin{align*}\frac{1}{t}\int H_{t}^{\mathscr{M}}(\gamma)\deriv W_p(\gamma) 
					&= \frac{1}{t}\int H_{t}^{\mathscr{M}}(\pi_p \circ \tilde{\gamma})\deriv \tilde{W}_{\tilde{p}}(\gamma)\\
					&= \frac{1}{t}\int \phi(\tilde{\gamma}(t))\deriv \tilde{W}_{\tilde{p}}(\gamma)\\
					&= \frac{1}{t}\int \left(\int_0^t\Delta_g\phi(\tilde{\gamma}(t))\deriv s\right)\deriv\tilde{W}_{\tilde{p}}(\gamma)\\
					&= \int\frac{1}{t}\left(\int_0^t\theta(\gamma(s))\deriv s\right)\deriv W_{p}(\gamma),
					\end{align*}
					where we have used Dynkin's formula (see \cite{CandelGarnett} or \cite{CandelFoliationsII}) in the next to last equation.
					Let now $\eps > 0$. Because $\theta$ is smooth and zero near the singularities, there exists $\delta > 0$ such that for every $p, p'$ in the same leaf with $\dist_g(p, p') < \delta$, we have $\abs{\theta(p) - \theta(p')}<\eps$, and $\theta$ is bounded. Consider $P_t^\delta(p)$ the probability that a path starting from $p$ remains in $B_g(p,\delta)$ from times $0$ to $t$.
					Then 
					\begin{align*}
					\abs{\frac{1}{t}\left(\int_0^t\theta(\gamma(s))\deriv s\right) - \theta(p)} 
					&\leq P_t^\delta(p)\eps + 2(1 - P_t^\delta(p)) \norm{\theta}_\infty.
					\end{align*}
					Because the leaves have uniformly bounded geometry (see lemma \ref{lem:saut-brownien}), for $t$ small enough, we have $P_t^\delta(p) > 1 - \eps/\norm{\theta}_\infty$ for every $p$. Then
					\begin{align*}
					\abs{\frac{1}{t}\left(\int_0^t\theta(\gamma(s))\deriv s\right) - \theta(p)} 
					< 3\eps,
					\end{align*}
					and therefore, since $\eps > 0$ is arbitrary, we have uniform convergence over $p$ and thus
					\[\frac{1}{t}\int H_{t}^{\mathscr{M}}(\gamma)\deriv \overline{\nu}(\gamma)  \xrightarrow[t\to0]{} \int \theta \deriv\nu.\]
					Now we have, since the Chern class $c_1(\mathscr{M})$ can locally be represented by $-\frac{1}{\pi}\theta\mathrm{vol}_g$:
					\begin{align*}[T]\cdot c_1(\mathscr{M}) 
					&= -\frac{1}{\pi} \int \theta\deriv\nu \\
					&= -\frac{1}{\pi}\lambda_{\mathscr{M}}.
					\end{align*}
					For the foliation $\mathscr{F}$, we can define a degree $d > 1$ (see for instance Loray-Rebelo \cite{LorayRebelo}). We have $\mathscr{L} = N_{P/M} \simeq \mathscr{O}_P(1)$, while $\mathscr{N} = N_{\restr{\mathscr{F}}{P}} \simeq \mathscr{O}_P(d + 2)$, where $d > 1$ is the degree of $\restr{\mathscr{F}}{P}$. Therefore:
					\begin{align*}\lambda_{\mathscr{N}} 
					&= -\pi[T]\cdot c_1(\mathscr{O}_P(d + 2))\\
					&= -(d+2)\pi[T]\cdot c_1(\mathscr{O}_P(1))\\
					&= (d+2) \lambda_{\mathscr{L}}.
					\end{align*}
					All the computations we have just made can also be done with the Brownian motion on the leaves defined using the Poincaré metric instead of the Poincaré-type metric. Let $\nu' = T \wedge \mathrm{vol}_{g_P}$. The measure $\theta\deriv \nu$ is independent of the metric, Poincaré or Poincaré-type $g$. So if we denote the Lyapunov exponent $\lambda^P_\mathscr{M}$, we have, adding a factor taking into account the fact that we do not necessarily have $\nu'(M) = 1$, when we apply the ergodic theorem:
					\begin{align*}
					\lambda^P_\mathscr{M} 
					&= \frac{1}{t\nu'(M)}\int H_{t}^{\mathscr{M}}(\gamma)\deriv \overline{\nu'}(\gamma)\\
					&\xrightarrow[t\to0]{} \frac{1}{\nu'(M)}\int \theta \deriv\nu\\
					&= \frac{1}{\nu'(M)}\lambda_{\mathscr{M}}.
					\end{align*}
					By the work of Nguyên \cite{Nguyen} or Deroin-Dupont-Kleptsyn \cite{DDK}, we can compute $\lambda_\mathscr{N}$:
					\[\lambda_\mathscr{N}^P = -\frac{d+2}{d-1}.\]
					Therefore we have:
					\[\lambda_{\mathscr{N}} = -\nu'(M)\frac{d+2}{d-1},\]
					\[\lambda_{\mathscr{L}} = -\nu'(M)\frac{1}{d-1}.\]
				\end{proof}

				\section{Contraction}
				\label{sect:contraction}
				In this section we prove the following proposition \ref{prop:contraction}, which is the basic dynamical ingredient for the rest of the proof. The proof is very similar to that of Deroin-Kleptsyn \cite[paragraph 2.1]{deroinkleptsyn}. The idea is to discretise the holonomy by cutting the Brownian path in short pieces, and applying a simple contraction lemma  (lemma \ref{lem:contraction}) — a rather trivial instance of Pesin theory \cite{BarreiraPesin} —, which proves, roughly stated, that when composing maps from a given family with a common fixed point of negative upper-Lyapunov exponent, the fixed point is attracting.
				
				Compared to the work of Deroin-Kleptsyn, the proof is slightly more complicated, because we have to take care of the (mild) non-conformality, and more importantly, to deal with the singularities. The idea is to sub-discretise further, with a step comparable to $\abs{\log \dist(\cdot, S)}^{-1}$, which therefore becomes smaller as one approaches a singularity, so that the holonomy remains controlled (see lemma \ref{lem:controle-hol}) — indeed, the contraction lemma with iteration of different maps requires a compactness-type assumption on the family of maps. The key idea (originally from Deroin-Dupont-Kleptsyn \cite{DDK}) is that up to a finite number of steps, the number of maps composed will be approximately a Birkhoff sum for $\abs{\log \dist(\cdot, S)}^{-1}$, which is integrable by Nguyên's theorem (theorem \ref{thm:nguyen}), and therefore almost surely grows linearly with time. This implies that the subdiscretisation will not prevent contraction.
				
				\begin{prop}
					Assume that we are in the context of proposition \ref{prop:lyapunov-negatif}.
					Let $\alpha = \min\left\{\lambda, \mu\right\}$. Then for every $\eps \in (0, \alpha)$, for $\nu$-almost every $p \in P\setminus S$, and $W_p$-almost every path $\gamma \in \Gamma_p$, there exists a constant $C > 0$, a radius $\rho > 0$ such that for every $t \geq s \geq 0$, the map $h_\gamma^{s,t}$ is defined on $S(\gamma(0), \rho e^{-\eps s})$ and for every $z_0, z_1 \in S(0, \rho e^{-\eps s})$, we have
					\[\frac{\dist(h_\gamma^{s,t}(z_0),h_\gamma^{s,t}(z_1))}{\dist(\gamma(t), S)} \leq Ce^{-(\alpha-\eps)(t-s)}e^{\eps s} \frac{\dist(z_0,z_1)}{\dist(\gamma(s), S)}.\]
					\label{prop:contraction}
				\end{prop}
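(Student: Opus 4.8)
The plan is to follow the discretisation scheme of Deroin--Kleptsyn, adapted to account for the singularities via the variable step size $\asymp \ell(\cdot)^{-1}$.

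First I would set up the abstract contraction lemma (the ``lemma \ref{lem:contraction}'' referenced in the text): if $(f_n)_{n \geq 0}$ is a sequence of germs in $\mathrm{Diff}(\C^2, 0)$ drawn from a bounded family (bounded in some $\mathrm{C}^2$-norm on a fixed ball of radius $\gtrsim 1$), all fixing $0$, and if $\frac{1}{n}\log\norm{\deriv(f_{n-1}\circ\cdots\circ f_0)_0} \to -\alpha < 0$, then there is $\rho > 0$ and $C > 0$ so that each composition $f_{n-1}\circ\cdots\circ f_0$ is defined on the ball of radius $\rho$ and sends the ball of radius $\theta$ into the ball of radius $C\theta e^{-(\alpha-\eps)n}$ for all $\theta \in (0,\rho)$; the point is that the nonlinear terms are controlled by the boundedness of the family and a telescoping estimate absorbs them into the exponential gain, using $\eps > 0$ of slack. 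This is the ``trivial instance of Pesin theory'' and I would state and prove it first, purely abstractly.

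Next I would realise the holonomy $h_\gamma^t$ as such a composition. Fix $\gamma \in \Gamma_p$ a typical Brownian path (typical for $\overline\nu$, hence satisfying the conclusion of proposition \ref{prop:lypaunov-negatif} for both $\mathscr{N}$ and $\mathscr{L}$, and also, by Nguyên's theorem \ref{thm:nguyen} and Birkhoff, $\frac{1}{T}\int_0^T \ell(\gamma(\tau))\,\deriv\tau \to \int \ell \,\deriv\nu < \infty$). I would then define stopping times $0 = \tau_0 < \tau_1 < \cdots$ inductively by requiring the Poincaré-type length of $\gamma$ on $[\tau_n, \tau_{n+1}]$ to be a small fixed constant times $\ell(\gamma(\tau_n))^{-1}$ — small enough that lemma \ref{lem:controle-hol} applies on each piece, giving a holonomy germ $f_n$ from $S(\gamma(\tau_n), \rho\dist(\gamma(\tau_n),S))$ to $S(\gamma(\tau_{n+1}), M\dist(\gamma(\tau_{n+1}),S))$ which, in the normalised parametrisations $n_{\gamma(\tau_n)}$, lies in a fixed bounded family of germs defined on a ball of radius $\gtrsim 1$. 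The key bookkeeping point, exactly as in the model computation of the introduction, is that the ergodic average control on $\ell(\gamma(\tau))$ ensures $\tau_n \asymp n$ (more precisely $\tau_n/n \to 1/\int\ell\,\deriv\nu$), so that the number of discretisation steps up to time $t$ is comparable to $t$; this is what lets one transfer the negativity of the Lyapunov exponent (phrased for continuous time $t$) into the hypothesis of the abstract lemma (phrased for the discrete index $n$). One also uses lemma \ref{lem:croissance-dhol} / lemma \ref{lem:ell-sing} to control $\norm{\deriv(h_\gamma^{\tau_n})_0}$ between the discrete index $n$ and the Lyapunov limit, and lemma \ref{lem:dist-sing} to relate $\dist(\gamma(\tau_n),S)$ across consecutive steps so that the radii in the statement (which involve $\dist(\gamma(t),S)$) come out right.

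Finally I would patch up the intermediate times: for general $t \in [\tau_n, \tau_{n+1}]$, write $h_\gamma^t = g \circ h_\gamma^{\tau_n}$ where $g$ is the holonomy along the short final piece, which by lemma \ref{lem:controle-hol} is a single bounded germ, hence only distorts radii by a bounded factor (absorbed into $C$, up to a harmless adjustment of $\eps$), and similarly relates $\dist(\gamma(t),S)$ to $\dist(\gamma(\tau_n),S)$ by lemma \ref{lem:dist-sing}. Combining, $h_\gamma^t$ sends $S(p,\theta\dist(p,S))$ into $S(\gamma(t), C'\theta e^{-(\alpha-\eps)t}\dist(\gamma(t),S))$, which is the claim. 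I expect the main obstacle to be the bookkeeping at the singularities: making the choice of the constant governing the step length uniform so that lemma \ref{lem:controle-hol} really does produce a single bounded family of germs independent of how close $\gamma$ wanders to $S$, and simultaneously verifying that the random time $\tau_n$ grows linearly in $n$ along a typical path — i.e. that the path does not linger near $S$ — which is precisely where Nguyên's integrability theorem \ref{thm:nguyen} is indispensable. Everything else is a routine adaptation of Deroin--Kleptsyn, with the extra (mild) nuisance of tracking the non-conformality, which costs nothing beyond replacing derivatives by operator norms throughout.
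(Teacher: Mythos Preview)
Your overall strategy is right, but there are two places where the argument as written does not go through and where the paper proceeds differently.

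First, your single-level discretisation via stopping times $\tau_n$ (first exit from a ball of Poincar\'e radius $c\,\ell(\gamma(\tau_n))^{-1}$) does not yield $\tau_n\asymp n$ from Nguy\^en's $L^1$ bound alone: the exit time from such a ball scales like $\ell^{-2}$, so the number of your steps up to time $T$ behaves like $\int_0^T\ell(\gamma(s))^2\,\deriv s$, and controlling this linearly in $T$ would require $\ell^2\in L^1(\nu)$, strictly stronger than theorem~\ref{thm:nguyen}. (Your asserted limit $\tau_n/n\to 1/\int\ell\,\deriv\nu$ is in any case not what an ergodic theorem delivers here, since the $\tau_n$ are not iterates of a fixed shift.) The paper instead uses a \emph{two-level} scheme: first a fixed time step $\delta$, setting $p_n=\tilde\gamma(n\delta)$; then each holonomy $h_{p_ip_{i+1}}$ is factored along the \emph{geodesic} from $p_i$ to $p_{i+1}$ in the universal cover into $K_i$ pieces of Poincar\'e length $\lesssim\ell(p_i)^{-1}$, so $K_i\lesssim \dist_P(\gamma:i\delta,(i{+}1)\delta)\,\ell(p_i)$. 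Now $K_i$ is a fixed measurable function of $\sigma_{i\delta}\gamma$, integrable by lemma~\ref{lem:integrabilite} (which needs only $\ell\in L^1$), and Birkhoff for the ergodic shift $\sigma_\delta$ gives $\sum_{i<n}K_i=O(n)$ directly --- this is precisely the organisational trick that makes Nguy\^en's $L^1$ estimate suffice.

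Second, your abstract contraction lemma hypothesis --- convergence of $\tfrac{1}{n}\log\norm{\deriv(f_{n-1}\circ\cdots\circ f_0)_0}$ alone --- is too weak in dimension two: the Lyapunov-norm construction in the Pesin argument needs the intermediate estimate $\norm{A_{n+m}\cdots A_{m+1}}\le Ce^{\eps m}e^{-(\alpha-\eps)n}$ for \emph{all} $m$, not just $m=0$, and this does not follow from your hypothesis even with the uniform bound $\norm{A_n}\le M$ coming from the bounded family. The paper extracts it from the very structure you dismiss as a mild nuisance: since $P$ is invariant, the derivative of each holonomy germ is upper-triangular; proposition~\ref{prop:lypaunov-negatif} gives separate Ces\`aro limits $-\lambda,-\mu$ for the two diagonal entries, the off-diagonal entry is uniformly bounded by lemma~\ref{lem:controle-hol}, and a short elementary cocycle lemma (lemma~\ref{lem:lyapunov}) converts this into the required intermediate estimate, which then feeds the contraction lemma~\ref{lem:contraction}.
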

				We will use the following simple cocycle lemma, a variant of a fact sometimes used to prove the Oseledets ergodic theorem (see for instance Barreira-Pesin \cite[theorem 1.3.12]{BarreiraPesin}).
				\begin{lem}
					Let $(A_n)_{n \geq 0}$ be a sequence of upper-triangular matrices in $\mathrm{GL}_2(\C)$, and write $A_n = \left( {\begin{array}{cc}
						a_n & c_n \\
						0 & b_n \\
						\end{array} } \right)$ for some sequences of complex number $(a_n), (b_n), (c_n)$. Assume that there exist $M > 0$ and $\lambda, \mu > 0$ such that
					\[\abs{c_n} \leq M, \]
					\[\lim \frac{1}{n} \sum_{i = 0}^{n- 1} \log\abs{a_i} = -\lambda < 0,\]
					\[\lim \frac{1}{n} \sum_{i = 0}^{n- 1} \log\abs{b_i} = -\mu < 0.\]
					Let $\alpha = \min\{\lambda, \mu\}$. Then for every $\eps > 0$, there exists $C > 0$ such that for every integers $m \geq 0$ and $n \geq 0$,
					\[\norm{A_{n + m}A_{n + m - 1}\cdots A_{m + 1}} \leq C e^{\eps m} e^{-(\alpha - \eps) n}.\]
					\label{lem:lyapunov}
				\end{lem}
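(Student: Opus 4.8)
The plan is first to exploit the triangular structure, then to upgrade the two Cesàro hypotheses into genuine non-asymptotic bounds, and finally to assemble these into the estimate. Since the $A_n$ are invertible and upper triangular, $a_n,b_n\in\C^{*}$ for all $n$, and the product $P_n:=A_{n+m}A_{n+m-1}\cdots A_{m+1}$ is again upper triangular; write $P_n=\begin{pmatrix}\alpha_n & \gamma_n\\ 0 & \beta_n\end{pmatrix}$ with $P_0=\mathrm{Id}$. From the recursion $P_n=A_{n+m}P_{n-1}$ one reads off $\alpha_n=\prod_{i=1}^{n}a_{m+i}$, $\beta_n=\prod_{i=1}^{n}b_{m+i}$ and $\gamma_n=a_{m+n}\gamma_{n-1}+c_{m+n}\beta_{n-1}$, which unrolls to
\[\gamma_n=\sum_{j=1}^{n}\Bigl(\prod_{k=j+1}^{n}a_{m+k}\Bigr)\,c_{m+j}\,\Bigl(\prod_{i=1}^{j-1}b_{m+i}\Bigr).\]
As all norms on $2\times 2$ matrices are equivalent, $\norm{P_n}\lesssim\abs{\alpha_n}+\abs{\beta_n}+\abs{\gamma_n}$, so it suffices to bound these three scalars.

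The key step is a ``tempering'' remark. Setting $S^{a}_N:=\sum_{i=0}^{N-1}\log\abs{a_i}$, the convergence $S^{a}_N/N\to-\lambda$ provides, for each fixed $\delta>0$, a constant $C_\delta$ depending on the sequence $(a_n)$ and on $\delta$ but not on $N$, with $\abs{S^{a}_N+\lambda N}\le\delta N+C_\delta$ for all $N\ge0$ — this holds for large $N$ by the definition of the limit, and for the finitely many small $N$ because each $\log\abs{a_i}$ is a finite real number — and likewise for $b$ with $\mu$ in place of $\lambda$. Expressing each of the partial products above as a difference of partial sums, for instance $\log\bigl|\prod_{k=j+1}^{n}a_{m+k}\bigr|=S^{a}_{m+n+1}-S^{a}_{m+j+1}$ (and analogously for the $b$-product and for $\alpha_n,\beta_n$), and applying these bounds, I would get, for $0\le j\le n$ and a constant $C'=C'(\delta)$ independent of $m,n,j$,
\[\log\Bigl|\prod_{k=j+1}^{n}a_{m+k}\Bigr|\le -\lambda(n-j)+2\delta(m+n)+C',\qquad \log\Bigl|\prod_{i=1}^{j-1}b_{m+i}\Bigr|\le -\mu(j-1)+2\delta(m+n)+C',\]
together with $\log\abs{\alpha_n}\le-\lambda n+2\delta(m+n)+C'$ and $\log\abs{\beta_n}\le-\mu n+2\delta(m+n)+C'$.

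Finally I would combine these. Plugging the two displayed estimates into the formula for $\gamma_n$ and using $\abs{c_{m+j}}\le M$,
\[\abs{\gamma_n}\lesssim_{\delta}e^{4\delta(m+n)}\sum_{j=1}^{n}e^{-\lambda(n-j)}e^{-\mu(j-1)}=e^{4\delta(m+n)}e^{-\lambda(n-1)}\sum_{i=0}^{n-1}e^{(\lambda-\mu)i}.\]
When $\lambda\neq\mu$ the last sum is $\lesssim e^{-\alpha n}$ (sum the geometric series: the outcome is $\asymp e^{-\min(\lambda,\mu)n}$ in either case $\lambda<\mu$ or $\lambda>\mu$), and when $\lambda=\mu=\alpha$ it equals $n\,e^{-\alpha(n-1)}\lesssim_\delta e^{\delta n}e^{-\alpha n}$; in all cases $\abs{\gamma_n}\lesssim_\delta e^{5\delta(m+n)}e^{-\alpha n}$. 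Together with the bounds on $\abs{\alpha_n}$ and $\abs{\beta_n}$, this gives $\norm{P_n}\le C_\delta\,e^{5\delta(m+n)}e^{-\alpha n}=C_\delta\,e^{5\delta m}e^{-(\alpha-5\delta)n}$ with $C_\delta$ independent of $m,n$; given $\eps>0$ one concludes by applying this with $\delta=\eps/5$. I do not expect any genuine difficulty beyond bookkeeping: the delicate points are merely to keep track of the index shift by $m$ in the two partial products inside $\gamma_n$, so that the error terms combine into a single factor $e^{O(\delta)(m+n)}$ which then splits cleanly into $e^{\eps m}$ and $e^{-(\alpha-\eps)n}$, and to notice that the borderline case $\lambda=\mu$ is the one place where the slack in the exponent is actually needed, to absorb the linear factor $n$ into $e^{\delta n}$.
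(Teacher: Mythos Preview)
Your proof is correct and follows essentially the same approach as the paper's: both exploit the triangular structure to reduce to bounding the three scalar entries, upgrade the Cesàro limits to two-sided non-asymptotic bounds on partial products (your ``tempering'' step is exactly the paper's passage from $C^{-1}e^{-\lambda n-\eps n}\le|a_{n-1}\cdots a_0|\le Ce^{-\lambda n+\eps n}$ to $|a_{n+m}\cdots a_{m+1}|\le De^{2\eps m}e^{-(\lambda-\eps)n}$), and then bound the off-diagonal entry via the explicit convolution-type sum. Your treatment is arguably a bit more careful in one place: you single out the borderline case $\lambda=\mu$, where the geometric sum degenerates to a linear factor $n$ that must be absorbed into the $\eps$-slack, whereas the paper's computation leaves this implicit.
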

				\begin{proof}
					Let $\eps > 0$. There exists $C \geq 1$ such that for every integer $n \geq 0$, 
					\[C^{-1}e^{-\lambda n - \eps n} \leq \abs{a_{n-1}\cdots a_{0}} \leq Ce^{-\lambda n+ \eps n},\]
					\[C^{-1}e^{-\mu n - \eps n} \leq \abs{b_{n-1}\cdots b_{0}} \leq Ce^{-\mu n+ \eps n}.\]
					Therefore, for some constant $D \geq 1$, we have, for every integers $m, n \geq 0$,
					\[\abs{a_{n+m}\cdots a_{m + 1}} \leq De^{2\eps m}e^{-(\lambda - \eps) n},\]
					\[\abs{b_{n + m}\cdots b_{m + 1}} \leq De^{2\eps m}e^{-(\mu - \eps) n}.\]
					Now, for integers $n, m \geq 0$, let us bound the the upper-right coefficient $C_{n,m}$ of $A_{n + m}A_{n + m - 1}\cdots A_{m + 1}$. We have, with obvious conventions for the degenerate indices:
					\begin{align*}
					\abs{C_{n,m}} &= \abs{\sum_{j = m}^{n + m - 1} a_{n + m}\cdots a_{j + 1} c_{j} b_{j - 1}\cdots b_{m + 1}} \\
					&\leq \sum_{j = m}^{n + m - 1} D^2M e^{2\eps j}e^{2\eps m}e^{-(\lambda - \eps)(n + m - j)}e^{-(\mu-\eps) (j - 1 - m)}\\
					&\lesssim e^{2\eps m}e^{-(\alpha - 2\eps) n}
					\end{align*}
					where the implicit constant can depend on $D, M, \eps, \lambda, \mu$ but not on $m$ and $n$. Putting all the estimates together gives the claim.
				\end{proof}
				\begin{lem}[Contraction lemma]
					Choose some norm $\norm{\cdot}$ on $\C^2$.
					Let $M > 0$ and $r > 0$, $\alpha > 0$. Assume that we have a sequence $(f_n)_{n\in \N}$ of $\mathrm{C}^2$ maps from $B_{\norm{\cdot}}(0, r)$ to $\C^2$ which are diffeomorphisms onto their image and fix $0$, and that for every $n \in \N$ and $x \in B(0, r)$, we have
					$\norm{\deriv^2(f_n)_x} \leq M$.
					
					For every integer $n \geq 0$, let $A_n = \deriv \left(f_n\right)_0$. Assume that for every $\eps > 0$, there exists $C > 0$ such that for every integers $m \geq 0$ and $n \geq 0$,
					\[\norm{A_{n + m}A_{n + m - 1}\cdots A_{m + 1}} \leq C e^{\eps m} e^{-(\alpha - \eps) n}.\]
					
					Then for every $\eps \in (0, \alpha)$, there exists $\rho \in (0, r)$ and $C>0$, depending only on $M$, $r$, $\alpha$ and $\eps$, such that for any integers $m \geq 0$, and $n \geq 0$,  $f_{m + n - 1} \circ \ldots\circ f_m$ is well defined and $Ce^{-(\alpha-\eps)n}e^{\eps m}$-Lipschitz on $B(0, \rho e^{-\eps m})$.
					\label{lem:contraction}
				\end{lem}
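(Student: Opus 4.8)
The plan is to prove the contraction lemma \ref{lem:contraction} by the standard Pesin-theoretic argument: find an adapted norm (Lyapunov norm) in which the linear cocycle $(A_n)$ is genuinely contracting at each step, then control the nonlinear error terms using the uniform $\mathrm{C}^2$-bound $M$, and finally run an induction. The input hypothesis already gives us the ``tempered'' bound $\norm{A_{n+m}\cdots A_{m+1}} \leq Ce^{\eps m}e^{-(\alpha-\eps)n}$, which is exactly what is needed to build the adapted metric.

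First I would fix $\eps \in (0,\alpha)$, and replace $\eps$ by $\eps/3$ (say) in the hypothesis so as to leave room for the nonlinear corrections. For each $n \geq 0$ define the Lyapunov norm
\[\norm{v}_n := \sup_{k \geq 0} e^{(\alpha - 2\eps)k}\norm{A_{n+k}\cdots A_{n+1}v},\]
with the convention that the $k=0$ term is $\norm{v}$. The hypothesis (with the small parameter $\eps/3$) guarantees the supremum is finite, and in fact $\norm{v} \leq \norm{v}_n \leq C e^{\eps n}\norm{v}$ for a constant $C$ depending only on $M, r, \alpha, \eps$ (here one uses that all the $A_j$ have norm bounded above and below by constants depending on the data — this follows from the hypothesis applied with $n=1$ in both directions, together with the fact that the $f_n$ are diffeomorphisms of a fixed ball with $\mathrm{C}^2$-norm $\leq M$, so $\norm{A_n}, \norm{A_n^{-1}} \lesssim 1$). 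The key property is the one-step contraction $\norm{A_{n+1}v}_{n+1} \leq e^{-(\alpha - 2\eps)}\norm{v}_n$, which is immediate from the definition by reindexing the supremum.

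Next I would estimate the nonlinearity. Writing $f_n(x) = A_n x + R_n(x)$ with $\norm{R_n(x)} \leq \frac{M}{2}\norm{x}^2$ (Taylor with the $\mathrm{C}^2$-bound), and using the comparison $\norm{\cdot} \leq \norm{\cdot}_n \leq Ce^{\eps n}\norm{\cdot}$, one gets in the adapted norms
\[\norm{f_n(x)}_{n+1} \leq \norm{A_n x}_{n+1} + \norm{R_n(x)}_{n+1} \leq e^{-(\alpha - 2\eps)}\norm{x}_n + C' e^{\eps n}\norm{x}^2 \leq \left(e^{-(\alpha - 2\eps)} + C' e^{\eps n}\norm{x}\right)\norm{x}_n.\]
I expect this step — controlling the $e^{\eps n}$ blow-up of the adapted norm against the quadratic gain $\norm{x}^2$ — to be the main obstacle, and it is resolved exactly as in Pesin theory: once $\norm{x}$ is smaller than $e^{-\eps n}$ times a fixed constant, the bracket is $\leq e^{-(\alpha - 3\eps)} < 1$. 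The induction then shows that if $x_0 := x$ has $\norm{x_0}_0$ small enough (i.e. $\norm{x} \leq \rho$ for a suitable $\rho = \rho(M, r, \alpha, \eps)$), then $x_n := f_{n-1}\circ\cdots\circ f_0(x)$ satisfies $\norm{x_n}_n \leq e^{-(\alpha - 3\eps)n}\norm{x_0}_0$ and in particular $\norm{x_n} \leq \norm{x_n}_n \leq e^{-(\alpha-3\eps)n}\norm{x_0}_0$, which is $\leq \rho$, so all the compositions stay in $B(0,\rho) \subset B(0,r)$ and are well-defined. Tracking a ball $B(0,\theta)$ with $\theta \in [0,\rho)$ through the same estimate (the bound is linear in $\norm{x_0}_0 \leq Ce^{\eps\cdot 0}\theta = C\theta$ for points of norm $\leq\theta$, since $n=0$ gives no blow-up) yields $f_{n-1}\circ\cdots\circ f_0(B(0,\theta)) \subset B(0, Ce^{-(\alpha-3\eps)n}\theta)$. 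Finally, re-absorbing the factor $3$ (run the whole argument with $\eps$ replaced by $\eps/3$ from the start) gives the statement with the exponent $(\alpha - \eps)$ as written, which completes the proof.
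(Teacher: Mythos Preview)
Your proposal is correct and follows essentially the same approach as the paper: build a Lyapunov (adapted) norm from the tempered hypothesis, check one-step linear contraction in that norm, absorb the quadratic remainder coming from the uniform $\mathrm{C}^2$ bound, and run the induction; the paper uses a sum rather than a $\sup$ in the definition of $\norm{\cdot}_n$, which is immaterial. Two small clean-ups: your norm should read $\norm{v}_n = \sup_{k\geq 0} e^{(\alpha-2\eps)k}\norm{A_{n+k-1}\cdots A_n v}$ so that the one-step property is $\norm{A_n v}_{n+1}\leq e^{-(\alpha-2\eps)}\norm{v}_n$ (matching what you actually use later), and the parenthetical about $\norm{A_n},\norm{A_n^{-1}}\lesssim 1$ is neither justified by the hypotheses nor needed, since $\norm{v}_n\leq Ce^{\eps n}\norm{v}$ follows directly from the tempered bound.
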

				
				\begin{proof}
					This is a very simple instance of Pesin theory, see e.g. Barreira-Pesin \cite[chapter 7]{BarreiraPesin}. Let $\eps \in (0, \alpha/4)$ with $\eps < 1$ and apply the hypothesis: there exists $C > 0$ such that for every integer $m > 0$ and $n \geq 0$,
					\begin{equation}\norm{A_{n + m}A_{n + m - 1}\cdots A_{m + 1}} \leq C e^{\eps m} e^{-(\alpha - \eps) n}.\end{equation}
					Now for any $m \in \N$, consider the (Lyapunov) norm $\norm{\cdot}_m^{'}$ defined by
					\[\norm{u}_m^{'} := \sum_{k = 0}^\infty \norm{A_{m + k - 1}\cdots A_m u}e^{(\alpha - 2\eps)k},\]
					for every $u \in \C^2$, which is well-defined by the previous equation, and moreover for every $u \in \C^2$, $\norm{u} \leq \norm{u}_m^{'} \leq C_\eps e^{\eps m}\norm{u}$. The important property of this norm is that for every $m \in \N$, $u \in \C^2$, we have $\norm{A_mu}^{'}_{m + 1} \leq e^{-(\alpha - 2\eps)}\norm{u}^{'}_m$.
					
					For $n \in \N$, write $f_n = A_n + R_n$, where $\deriv_0 (R_n) = 0$. Then for every $u \in B(0, r)$, we have $\norm{R_n(u)}^{'}_{n + 1} \leq C_\eps Me^{\eps (n + 1)}\left(\norm{u}^{'}_{n}\right)^2$, and therefore
					\[\norm{f_n(u)}^{'}_{n + 1} \leq e^{-(\alpha - 2\eps)}\norm{u}^{'}_n + C_\eps Me^{\eps (n + 1)}\left(\norm{u}^{'}_{n}\right)^2.\]
					Now choose $\delta > 0$ such that $e^{-(\alpha - 3\eps)} + \delta e^{2\eps}\leq 1$ and $e^{-(\alpha-2\eps)} + \delta \leq e^{-(\alpha-3\eps)}$. Let $x \in B(0, r)$ with $\norm{x}'_m \leq \delta C_\eps^{-1}M^{-1}e^{-\eps m}$. By induction, for every $n \geq 0$, we have
					\[\norm{f_{m + n - 1} \circ \ldots \circ f_m(x)}'_{n + m} \leq \delta C_\eps^{-1}M^{-1}e^{-\eps (m + n)}.\]
					By another induction, we deduce that
					\[\norm{f_{m + n - 1} \circ \ldots \circ f_m(x)}'_{n + m} \leq e^{-(\alpha-3\eps)n}\norm{x}'_m.\]
					Therefore:
					\[\norm{f_{m + n - 1} \circ \ldots \circ f_m(x)} \leq C_\eps e^{-(\alpha-3\eps)n}e^{\eps m}\norm{x}\]
					for any $x \in B(0,r)$ such that $\norm{x} \leq \delta C_\eps^{-1}M^{-1}e^{-\eps m}$.
					
					Now for such an $x$ we can also write for any $n \geq 0$, \[x_n := f_{m + n - 1} \circ \ldots \circ f_m(x),\]
					\[L'_n := \deriv \left(f_{m + n - 1}\right)_{x_{n - 1}} \circ \ldots \circ \deriv \left(f_m\right)_{x_0},\]
					\[L_n := A_{m+n - 1}\cdots A_m,\]
					\begin{align*}&\norm{L'_n - L_n}'_{n + m}\\
					&\leq \norm{\deriv \left(f_{m + n - 1}\right)_{x_{n-1}} - \deriv \left(f_{m + n - 1}\right)_0}'_{m+n}\norm{L'_{n-1}}'_{m+n} + \norm{A_{m + n - 1}\left(L'_{n-1}-L_{n-1}\right)}'_{m+n}\\
					&\leq C_\eps Me^{\eps(m+n)}\norm{x_{n-1}}\left(\norm{L_{n-1}}'_{m+n} + \norm{L'_{n-1} -L_{n-1}}'_{m+n}\right) + e^{-(\alpha-2\eps)}\norm{L'_{n-1}-L_{n-1}}'_{m+n}.
					\end{align*}
					Further reducing $\delta$ if necessary, we deduce by induction that for every $n \geq 0$,
					$\norm{L'_n-L_n}'_{m+n} \leq e^{-(\alpha-3\eps)n}e^{\eps m}\norm{x}$, and therefore $\norm{L'_n} \leq C_\eps e^{-(\alpha-3\eps)n}e^{\eps m}$. Therefore $f_{m + n - 1} \circ \ldots \circ f_m$ is $C_\eps e^{-(\alpha-3\eps)n}e^{\eps m}$-Lipschitz on a ball of the form $B(0,\rho e^{-\eps m})$ for some $\rho > 0$.
				\end{proof}

				\begin{proof}[Proof of proposition \ref{prop:contraction}]
					Choose some $\delta \in (0,1)$.  Let $\gamma \in \Gamma$ be a path satisfying the (generic) negative Lyapunov exponent assumption of the proposition. Consider, for any $n \in \N$, the point $p_n = \tilde{\gamma}(n\delta)$, where $\tilde{\gamma}$ is a lift of $\gamma$ to the universal covering space $\tilde{L}$ of the leaf of $p := \gamma(0)$. For every two points $a, b \in \tilde{L}$, recall that we write $h_{a,b}$ for the holonomy relative to the geodesic from $a$ to $b$. Then we can write, for any integer $n \geq 1$:
					\[h_{p_0p_{n - 1}} = h_{p_{n - 2}p_{n-1}}\circ \ldots \circ h_{p_0p_1}.\]
					For any integer $i$ with $0 \leq i \leq n-1$, we can also write
					\[h_{p_ip_{i + 1}} = h_{a_{K - 2}a_{K-1}}\circ \ldots \circ h_{a_0a_1}\]
					for some points $a_j = a_j^i$, $j = 0, 1, \ldots, K-1$ with $\dist_g(a_j, a_{j + 1}) \lesssim \ell(p_i)^{-1}$  where $K_i$ is the upper integer part of $C\dist_g(\tilde{\gamma}(i\delta),\tilde{\gamma}((i + 1)\delta))\ell(p_i)$. Here the constant $C > 0$ depends only on the geometry of the foliation and is chosen so that we may apply lemma \ref{lem:controle-hol} to the $h_{a_ja_{j + 1}}$, for $i \geq 1$ and $ 0 \leq j \leq K_i$. Note that it is uniformly bounded and defined on balls of radius uniformly bounded away from zero, depending only on the geometry of the foliations, so we may apply the Cauchy inequalities to get $\mathrm{C}^2$ bounds.
					
					Concatenating all these finite sequences, we obtain a sequence $(f_n)_{n \in \N}$ of maps defined on $B(0,r)$ for some $r \gtrsim 1$ and bounded in $\mathrm{C}^2$ norm by a constant $M \lesssim 1$ (using the parametrisation of lemma \ref{lem:controle-hol}). For any $n \in \N$, the map $f_{n - 1} \circ \ldots \circ f_0$ is the holonomy along $\gamma$ from times $0$ to $t(n)$ for some $t(n) \geq 0$. The differentials $\deriv_0 f_n$ can be seen as upper-triangular matrices whose diagonal entries define two sequences $(a_n)$ and $(b_n)$ and by assumption we have:
					\[\lim \frac{1}{t(n)} \log\left(|a_{n-1}\ldots a_0|\right) < 0, \]
					\[\lim \frac{1}{t(n)} \log\left(|b_{n-1}\ldots b_0|\right) < 0. \]
					Now we show that $t(n)/n$ has a non-zero limit. The $K_i$ defined above is a certain non-negative function (independent of $i$) of $\sigma_{i\delta}(\gamma)$ which is certainly smaller than the function of lemma \ref{lem:integrabilite} and therefore integrable with respect to $\overline{\nu}$.
					Therefore, by the Birkhoff ergodic theorem, for $\overline{\nu}$-almost any $\gamma$, $\left(K_0 + \ldots + K_{p - 1}\right)/n$ converges as $p\to\infty$ to a limit $C > 0$ (independent of $\gamma$). Note moreover that for every integer $p \geq 1$, we have $t(K_0 + \ldots+K_{p-1}) = p\delta$ and the function $n \mapsto t(n)$ is increasing, so the ratio $t(n)/n$ converges as $n \to \infty$ to $C\delta$. Therefore we have
					\[\lim \frac{1}{n} \log\left(|a_{n-1}\ldots a_0|\right) < 0, \]
					\[\lim \frac{1}{n} \log\left(|b_{n-1}\ldots b_0|\right) < 0, \]
					so we may apply lemma \ref{lem:lyapunov}.
					
					Let now $t \geq s \geq 0$. We choose $m$ the smallest integer such that $m\delta \geq s$ and $n$ the largest integer such that $n\delta \leq t$. We may write the holonomy $h$ along $\gamma$ between times $s$ and $t$ in the parametrisation of lemma \ref{lem:controle-hol} as $h = a \circ h' \circ b$, where $a$ and $b$ are holonomies on short intervals of time such that lemma \ref{lem:controle-hol} applies, and the map $h'$ is the composition $f_{\ell} \circ \cdots f_k$ for some integers $k, \ell$ with $0 \leq k \leq \ell$. Write for any $n \in \N$, $S_n = K_0 + \ldots + K_{n - 1}$. By definition, we have
					\[S_{m - 1} < k \leq S_m \leq S_n \leq \ell < S_{n + 1}.\]
					There exists an integer $n_0$ such that for every integer $n \geq n_0$, we have $Cn - \eps n \leq S_n \leq Cn + \eps n$. Therefore if $k > n_0$, which we can assume, adjusting the constants as necessary, we have $(C - \eps)m - C + \eps \leq k \leq (C + \eps)m$ and $\ell - k \geq S_{n} - S_{m - 1} \geq C(n -m) - \eps(n + m) + C + \eps$. 
					By lemma \ref{lem:contraction}, there exists $\rho > 0$, depending only on the geometry of the foliation and $\eps$, such that for any $x \in B(0, \rho e^{-\eps k})$, $h'(x)$ is well defined, belongs to $B(0, \rho e^{-\eps k})$ and for every $\theta \in [0, \rho e^{-\eps k})$, the map $h'$ sends the ball $B(0, \theta)$ inside $B(0, Ce^{-(\alpha - \eps)(\ell - k + 1)}e^{\eps k}\theta)$. Using the estimates on $k$ and $\ell - k$, this gives the result for $h'$, adjusting $C$ and $\rho$. The result for $h$ follows by further adjusting the constants to account for $a$ and $b$. The Lipschitz part of the proof follows by similar arguments.
				\end{proof}
				
				\section{Similarity and conclusion of the proof}
				\label{sect:similarite}
				In this section, we prove the similarity. This is quite technical. We follow the proof of Deroin-Kleptsyn \cite{deroinkleptsyn}, making essential use of discretisation of Brownian path. This is necessary because we lose genericity if we consider the small movements of Brownian motion. But in our singular context, the discretisation is more delicate, just like the contraction part was: it is necessary to have a subdiscretisation, which means first discretising with a time step $\delta$, and then further, with a step smaller than $\delta\ell(p)^{-1}$ for a segment of Brownian path which starts at $p$ (contrary to the contraction part of the proof, the subdiscretisation is done with a step which is \emph{smaller}, and not comparable to, $\delta\ell(p)^{-1}$, for technical reasons). Therefore we suggest reading the simpler case treated by Deroin-Kleptsyn \cite[proof lemma 2.4]{deroinkleptsyn}, which is already quite involved. We will refer to it several times.
				
				There are several steps to the proof, which can be described as follows:
				\begin{enumerate}
					\item First, we prove that for two close points $p$, $q$ with $q \in S(p)$, there are sets on which the heat kernels are close (proposition \ref{lem:comparaison}).
					\item For the second step, we consider sets of "good" paths, which satisfy the contraction property of proposition \ref{prop:contraction}, with exponent $\beta > 0$ and such that at time $t$, the point $\gamma(t)$ is at a distance bounded from below from $S$ by roughly $e^{-\kappa\beta t}$, for some small parameter $\kappa > 0$. By Nguyên's theorem \ref{thm:nguyen} and proposition \ref{prop:contraction}, for generic points in $P \setminus S$, the probability that a path is good is close to $1$. Choose such a point $p$.
					\item Then we prove (lemma \ref{lem:queue-A}) that if we take $p'$ \emph{any} (not just almost any) point close to $p$ in $L_p$, the probability that a path starting from $p'$ is good is still close to $1$.
					\item Now we try to compare paths on different leaves. For this, it is convenient to discretise paths (definition \ref{def:A}): for a path $\gamma$, we consider only the points $\gamma(t)$ for $t \in T$, where $T$ is a certain discrete set. The set $T$ is such that we consider shorter intervals of times as time goes to infinity, depending on $\kappa$, to account for the fact that the path might be at distance of order $e^{-\kappa\beta t}$ to the singularity at time $t$. The aim of this is to keep control of the heat kernel in proposition \ref{lem:comparaison} as we might get very close to the singularity, where the behaviour of leaves can be wild. 
					
					Moreover, taking \emph{any} (not just almost any) point $q$ in $S(p)$ close enough to $p$, we ask that the $\gamma(t)$, for $t \in T$, belong to the sets where the heat kernel on $L_p$ and $L_q$ are close, as in step 1. Lemma \ref{lem:proba-A} shows that we can impose this without losing much measure on the set of paths.
					\item Then, we consider the map which takes a (discretised) path starting at $p$ and projects it to a path starting at $q$, and we show that this map is close to being "absolutely continuous", assuming $p$ and $q$ are very close. This is the content of lemma \ref{lem:absolue-continuite}. To formulate this statement, it is convenient to have defined discretised paths starting from $q$ (definition \ref{def:B}).
					
					From this, we deduce that if $q$ and $p$ are close, if a property depending only on times in $T$ is true for generic paths starting from $p$, then it is true generically for paths starting at $q$ (corollary \ref{coro:genericite}).
					\item Using all this, we can transfer generic properties from the point $p$ to close points $q$. Generically, we can assume that paths starting from $p \in P \setminus S$ are good and are distributed according to $\nu$, by the ergodic theorem. Then we deduce that the same is true for \emph{every} (crucially, not just \emph{almost} every) close enough points $q$ (corollary \ref{lem:similarite2}) in $M\setminus S$. To prove this, it is important to be able to consider various arbitrarily small steps of discretisation (depending on the modulus of continuity of functions against which we will test our measures), which we can do by lemma \ref{lem:red-delta} and lemma \ref{lem:dediscretisation}. The proofs of these lemma must involve recovering information on the path at all times from information at the discretised times, which might worsen the contraction properties and the distance from the singularities, but in a controlled way, depending on $\kappa$.
				\end{enumerate}
				Implementing all these steps, we have an open set in $M\setminus S$ where the behaviour of paths satisfies exponential contraction and distribution according to $\nu$. 
				
				In this section we assume that \ref{prop:lyapunov-negatif} holds, that is, there are two negative Lyapunov exponents $-\lambda$ and $-\mu$. We fix in all this section $\alpha > 0$ with $\alpha < \min\{\lambda,\mu\}$.
				\subsection{Comparing heat kernels on close leaves.}
				
				\begin{prop}
					There exist constants $\CstALemComparaison, \CstBLemComparaison > 0$ and $\cstLemComparaison > 0$, such that the following holds:
					
					Let $\delta \in (0, 2)$. Let $p \in P \setminus S$ and $q \in M \setminus S$ with $q \in S(x)$. Let $\tilde{p}, \tilde{q}$ be lifts of $p$ and $q$ in the universal covers of their leaves. For every $z$ in the universal covering of a leaf, consider the measure $\nu_z^\delta = p_g(z, \cdot ; \delta)\mathrm{d}\mathrm{vol}_g$ on $\tilde{L}_z$, where we recall that $p_g$ is the heat kernel for the metric $g$.
					Let $R > \RLemSautBrownien$ be such that \begin{equation}\exp\left(\CstALemComparaison\ell(s)e^{\CstALemComparaison R}\right)\dist(p, q) \leq \epsLemComparaison.
					\label{eq:comparaison}\end{equation} Then there exists $E \subset B_g(\tilde{p}, R) \subset \tilde{L}_p$ such that we have:
					\begin{itemize}
						\item $\nu_{\tilde{p}}^\delta(E) \geq 1 - \eps$ ;
						\item  the measure $\restr{\nu_{\tilde{p}}^\delta}{B_g(\tilde{p}, R)}$ is absolutely continuous with respect to the measure $\left(\phi_{\tilde{p}\tilde{q}}\right)_{*}\restr{\nu_{\tilde{q}}^\delta}{B_g(\tilde{q}, 2R)}$, and the Radon-Nikodym derivative satisfies \[\abs{\frac{\mathrm{d}\restr{\nu_{\tilde{p}}^\delta}{B_g(\tilde{p}, R)}}{\mathrm{d}\left(\left(\Phi_{\tilde{p}\tilde{q}}\right)_{*}\restr{\nu_{\tilde{q}}^\delta}{B_g(\tilde{q}, 2R)}\right)} - 1} \leq \eps\] at every point in $E$,
					\end{itemize}
					where
					\[\eps = \CstBLemComparaison\delta^{-\CstBLemComparaison}\left(\exp\left(\CstBLemComparaison\ell(p)e^{\CstBLemComparaison R}\right)\dist(p, q)  + \exp\left(-\cstLemComparaison\frac{R^2}{\delta}\right)\right)^{1/2}.\]
					
					\label{lem:comparaison}
				\end{prop}
				Below we will write the set in question $E(p, q, R)$ if the condition of the proposition are satisfied. 
				\begin{proof}
					Considering lemma \ref{lem:proj-globale}, we can define projections. We need some information about their construction, which is only local, and is contained in claim \ref{claim:derivee-proj} with $r = 5$. Choosing $C > 0$ in the statement large enough, we can assume $\norm{\Phi_{pq}}_{C^5} \lesssim 1$ in the norm of claim \ref{claim:derivee-proj}. Consider the metric $g'$, which coincides with $\left(\Phi_{pq}\right)_{*}g$ in the ball of center $p$ and radius $R$, and with $g$ outside of a larger ball of center $p$ and radius $2R$. This metric can be obtained by convex combination with coefficients defined by cut-off functions. Then $g$ and $g'$ are in a compact convex set for the $\mathrm{C}^4$ topology, depending only on the geometry of the foliation.
					
					Consider then the associated heat kernels $p_g$ and $p_{g'}$. Because the construction is explicit (see Candel-Conlon \cite[appendix B.6]{CandelFoliationsII} for an overview, or Berger-Gauduchont-Mazet \cite{BergerGauduchontMazet} for details), it depends differentiably on the metric when the time is bounded, with a polynomial singularity in time. So for every $\delta \in (0, 1)$, we have $\abs{p_g(\cdot, \cdot ; \delta) - p_{g'}(\cdot, \cdot;\delta)} \leq C\delta^{-C}\norm{g - g'}_{C^4}$. But we have \[\norm{\Phi_{pq} - \mathrm{id}}_{C^5} \lesssim \exp\left(C\ell(p)e^{CR}\right)\dist(p, q) \lesssim 1,\] which implies that $\norm{g - g'}_{C^4} \lesssim\exp\left(C\ell(s)e^{CR}\right)\dist(p, q)$, and so \[\abs{p_g(p, \cdot ; \delta) - p_{g'}(p, \cdot;\delta)} \lesssim \delta^{-C}\exp\left(C\ell(p)e^{CR}\right)\dist(p, q)\] for every $\delta \in (0, 1)$.
					The rest of the proof is just like in the work of Deroin-Kleptsyn \cite[proof of proposition 5.1]{deroinkleptsyn}.
				\end{proof}

				\subsection{Similarity of Brownian motion}

				In this section, we prove technical results on sets of paths to implement the "similarity" part of Deroin-Kleptsyn's method. Note that we will work with paths in universal coverings of leaves, but this will be slightly awkward notationally, and the reader may ignore this difficulty by assuming that all the leaves are simply connected. We will abuse notations and often write distances between points or sets, one of which is in a universal covering of a leaf, e.g. $\dist(p', S)$ for some $p' \in \tilde{L}_p$. In this case it should be understood that this means $\dist(\pi_p(p'),S)$, where $\pi_p \colon \tilde{L}_p \to L_p$ is a fixed universal covering map. Similarly, we can define $\ell(p')$, etc.
				
				\begin{defn}
					Let $p \in M \setminus S$ and $\tilde{p}$ be a lift in $\tilde{L}_p$. Let $\beta, \kappa > 0$ and $L > 0, B > 1$ and $\rho \in (0, \rhoSection)$. Define $\mathscr{A}_{\tilde{p}}(\kappa, B ; \beta, \rho, L)$ to be the set of paths $\gamma \in \tilde{\Gamma}_{\tilde{p}}$ such that for every $t \geq s \geq 0$:
					\begin{enumerate}
						\item the holonomy $h_{\gamma}^{s,t}$ along $\gamma$ between times $s$ and $t$ maps $S(\gamma(s),\rho e^{-\kappa\beta s})$ to $S\left(\gamma(t),L\rho e^{-\beta (t - s)}\right)$, and moreover, for any $z_0, z_1 \in S(\gamma(s),\rho e^{-\kappa\beta s})$ we have
						\[\frac{\dist\left(h_{\gamma}^{s,t}(z_0), h_{\gamma}^{s,t}(z_1)\right)}{\dist(\gamma(t),S)} \leq Le^{-\beta(t-s)}e^{\kappa\beta s}\frac{\dist(z_0, z_1)}{\dist(\gamma(s),S)}, \]
						where we have abused notation and written $h_\gamma^{s,t}$ for the holonomy of the projection of $\gamma$ on $L_p$,
						\item $\ell(\gamma(t)) \leq \kappa \beta t + B$.
					\end{enumerate}
					
					We also define $\mathscr{A}_{\tilde{p}}(\kappa; \beta)$ to be the union of the $\mathscr{A}_{\tilde{p}}(\kappa, B ; \beta, \rho, L)$ with $B, \rho, L$ as above. That is, $\mathscr{A}_{p}(\kappa; \beta)$ is the set of paths with exponential contraction of transverse sections with exponent $\beta$, and which do not approach the singularity too fast exponentially, depending on $\kappa$.
					\label{def:Acontinu}
				\end{defn}

				\begin{lem}
					Let $p \in M \setminus S$ and $\tilde{p}$ be a lift in $\tilde{L}_p$. Let $\kappa, \beta \asymp 1$ and $L > 0, B > 1$ and $\rho \in(0,\rhoSection)$. Let $\eta > 0$. Then if $B'$ and $L'$ are large enough and $\rho' > 0$ is small enough depending on $B$, for $p' \in \tilde{L}_p$ close enough to $\tilde{p}$, we have $\tilde{W}_{p'}(\mathscr{A}_{p'}(\kappa, B' ; \beta, \rho', L')) \geq \tilde{W}_{\tilde{p}}(\mathscr{A}_{\tilde{p}}(\kappa, B ; \beta, \rho, L)) - \eta$.
					\label{lem:queue-A}
				\end{lem}
				\begin{proof}
					The proof is an adaptation of an argument of Deroin-Kleptsyn \cite[paragraph 5.2.1]{deroinkleptsyn}, based on the Markov property. Choose some $t_0 > 0$, for instance $t_0 = 1$. Using lemma \ref{lem:saut-brownien}, choose $R > 0$ large enough such that with probability at least $1 - \eta$, for any starting point $p'$, a path $\gamma \in \tilde{\Gamma}_{p'}$ satisfies $\mathrm{diam}_g\left(\gamma([0,t_0])\right) < R/2$. In particular, $\nu_{\tilde{p}}^{t_0}(B_g(p,R)^{c}) \leq \eta$.
					
					Because the heat kernel is positive and smooth, we can consider a small neighborhood $U \subset B_g(\tilde{p},R/2)$ of $p$ in $\tilde{L}_p$ such that if $p' \in U$, for all $z \in B_g(p,R)$, we have $p_g(p', z;t_0) \geq (1 - \eta)p_g(p, z;t_0)$. 
					
					Now write $X_z \subset \tilde{\Gamma}_z$ for the set of paths starting from $z$ satisfying the conditions of definition \ref{def:Acontinu} shifted by $t_0$. 
					Then, if $\gamma \in \mathscr{A}_{\tilde{p}}(\kappa, B ; \beta, \rho, L)$ we have $\sigma_{t_0}(\gamma) \in X_{\gamma(t_0)}$ and so by the Markov property:
					\begin{align*}
					\tilde{W}_{\tilde{p}}(\mathscr{A}_{\tilde{p}}(\kappa, B ; \beta, \rho, L))
					&\leq \nu_{\tilde{p}}^{t_0}(B_g(\tilde{p},R)^{c}) + \int_{B_g(\tilde{p},R)} p_g(\tilde{p},z;t_0)\tilde{W}_z(X_z)\mathrm{d}\mathrm{vol}_g(z)\\
					&\leq \eta + \int_{B_g(\tilde{p},R)} p_g(\tilde{p},z;t_0)\tilde{W}_z(X_z)\mathrm{d}\mathrm{vol}_g(z).
					\end{align*}
					On the other hand, the function $\ell$ is bounded from above on $B_g(\tilde{p},R)$ by a constant $B'$, and if $z, z' \in B_g(p,R)$, the holonomy between $z$ and $z'$ is controlled by lemma \ref{lem:controle-hol2} by some large constant $L' > 0$ on the transverse section $S(z, \rho')$ for a certain $\rho' > 0$ which depends only on $R$ and the geometry of the foliation. Moreover we can assume that $\rho' < \rho$, $L' > L$ and $B' > B + \kappa\beta t_0$.
					Therefore, concatenating a continuous path from $p'$ to $z$ which remains in $B_g(\tilde{p},R)$ with a path in $X_z$ gives a path in $\mathscr{A}_{\tilde{p}}(\kappa, B' ; \beta, \rho', L')$ and so by the Markov property:\begin{align*}
					\tilde{W}_{p'}(\mathscr{A}_{p'}(\kappa, B' ; \beta, \rho', L'))
					&\geq  \tilde{W}_{p'}\left\{\gamma \in \tilde{\Gamma}_{p'}\,:\,\gamma([0,t_0]) \subset B_g(\tilde{p},R) \text{ and } \sigma_{t_0}(\gamma) \in X_{\gamma(t_0)} \right\}\\
					&\geq \int_{B_g(\tilde{p},R)} p_g(p',z;t_0)\tilde{W}_z(X_z)\mathrm{d}\mathrm{vol}_g(z) - \eta\\
					&\geq (1 - \eta)\int_{B_g(\tilde{p},R)} p_g(\tilde{p},z;t_0)\tilde{W}_z(X_z)\mathrm{d}\mathrm{vol}_g(z) - \eta\\
					&\geq \tilde{W}_{\tilde{p}}(\mathscr{A}_{\tilde{p}}(\kappa, B ; \beta, \rho, L)) - 3\eta,
					\end{align*}which proves the claim, since $\eta > 0$ can be chosen arbitrarily close to $0$.
				\end{proof}
				
				\begin{assumption}
					From now on, the parameters $\beta$ and $\kappa$ will be assumed to be small enough (in particular $\kappa < 1$) but bounded away from zero, and we fix a large constant $R  \geq \RLemSautBrownien$ depending only on the geometry of the foliation. The exact requirements on these constants will be defined implicitly during the proofs below. Later we will apply this in particular with $\beta = \alpha$.
					\label{assumption-kappa}
				\end{assumption}
				
				\begin{defn}There exists a constant $\CstDefnA > 0$ such that we can make the following definition:
					
					Let $p \in M \setminus S$ and $\tilde{p}$ be a lift in $\tilde{L}_p$. Let $\kappa, \beta \asymp 1$ and $L > 0, B > 1$. Let $\rho \in(0,\rhoSection)$. Let $\theta \in [0, \rho)$ such that $L\theta e^{\CstDefnA B} \lesssim 1$. Let $q \in S(p,\theta)$ and $\tilde{q}$ be a lift in $\tilde{L}_q$.
					
					Define $\mathsf{A}^{\delta}_{\tilde{p}\tilde{q}}(\kappa, B ; \beta, \rho, L)$ to be the set of paths $\gamma \in \tilde{\Gamma}_p$ such that for every integers $n, m$ with $n \geq m \geq 0$ and $i = 1, \ldots, K_n$, $j = 1, \ldots, K_m$ (the definition of the notations are given below for clarity):
					\begin{enumerate}
						\item $p_{n,i} \in E^{\delta/K_n}(p_{n,i-1}, q_{n, i-1}, R)$, \label{item:Aun}
						\item the holonomy between $p_{m,j}$ and $p_{n,i}$ maps $S(p_{m,j},\rho e^{-\kappa\beta t_{m,j}})$ to $S\left(p_{n,i},L\rho e^{-\beta (t_{n,i}-t_{m,j})}\right)$, and moreover, for any $z_0, z_1 \in S(p_{m,j},\rho e^{-\kappa\beta t_{m,j}})$ we have
						\[\frac{\dist\left(h_{p_{m,j}p_{n,i}}(z_0), h_{p_{m,j}p_{n,i}}(z_1)\right)}{\dist(\gamma(t_{n,i}),S)} \leq Le^{-\beta(t_{n,i}-t_{m,j})}e^{\kappa\beta t_{m,j}}\frac{\dist(z_0, z_1)}{\dist(\gamma(t_{m,j}),S)}; \]
						\item $\ell(p_{n,i}) \leq \kappa \beta \delta n + B$.
						
					\end{enumerate}
				Here we have written $K_n$ for the superior integer part of $\kappa\beta\delta n + B$ for any $n$, $t_{n,i } = n\delta + i\delta/K_n$, $p_{n,i} = \gamma(t_{n,i})$, and the sequence $(q_{n,i})$ is defined inductively by $q_{0,0} = \tilde{q}$, $q_{n,i} = \Phi^{-1}_{q_{n,i-1}p_{n,i-1}}\left(p_{n,i}\right)$, and $q_{n,K_n} = q_{n + 1,0}$, $p_{n,K_n} = p_{n + 1,0}$. That is, $q_{n,i}$ is the image of $q_{0,0}$ by the holonomy from $p_{0,0}$ to $p_{n,i}$. Finally, we have again abused notation and written $h_{p_{m,j}p_{n,i}}$ for the holonomy between the projections to $L_p$ of $p_{m,j}$ and $p_{n,i}$.
					\label{def:A}
				\end{defn}
				\begin{rem}
					\begin{enumerate}
						\item Note that the projection is well-defined since for every $n \geq 0$ and $i \geq 1$,
						\begin{align*}\exp\left(\CstLemProjGlobale\ell(p_{n,i-1})e^{2\CstLemProjGlobale R}\right)\dist(p_{n,i-1}, q_{n,i-1})
						&\leq \exp\left(\CstLemProjGlobale K_ne^{2\CstLemProjGlobale R}\right)L\theta e^{-\beta t_{n,i-1}}\dist(p_{n,i},S) \\
						&\leq \mathrm{diam}(M) L\theta e^{\CstDefnA(B + 1)}\\
						&< \epsLemProjLocale,\end{align*}
						if we take $\CstDefnA > \CstLemProjGlobale e^{2\CstLemProjGlobale R}$, and so the image of $\Phi_{q_{n,i-1}p_{n,i-1}}$ contains $B_g(p_{n,i-1}, R)$ by lemma \ref{lem:proj-globale}, and this set contains $p_{n,i}$, since by definition $E^{\delta/K_n}(p_{n,i-1}, q_{n,i-1}, R) \subset B_g(p_{n,i-1}, R)$. For the same reason, the condition \ref{eq:comparaison} in proposition \ref{lem:comparaison} is satisfied for the appropriate choice of constants.
						\item The idea of definition \ref{def:A} is the same as that of definition \ref{def:Acontinu}, except that:
						\begin{enumerate}
							\item for technical reasons, we only impose conditions on the path at a certain set of discretised times $t_{n,i}$ ;
							\item we impose contraints at those times that the points belong to certain sets given by proposition \ref{lem:comparaison} where we can compare the heat kernels on different leaves.
						\end{enumerate}
					\end{enumerate}
				\end{rem}
				\begin{lem}There exist constants $\CstLemProbaA, \cstLemProbaA > 0$ such that the following holds:
					
					Let $\delta \in (0,1)$. Let $p \in M \setminus S$ and $\tilde{p}$ be a lift in $\tilde{L}_p$. Let $\kappa, \beta \asymp 1$ and $L > 0, B > 1$. Let $\rho \in(0,\rhoSection)$. Let $\theta \in [0, \rho)$ such that $L\theta e^{\CstDefnA B} \lesssim 1$. Let $q \in S(p,\theta)$ and $\tilde{q}$ be a lift in $\tilde{L}_q$.
					Then we have
					\[ \tilde{W}_{\tilde{p}}\left(\mathscr{A}_{\tilde{p}}(\kappa, B ; \beta, \rho, L) \setminus \mathsf{A}^{\delta}_{\tilde{p}\tilde{q}}(\kappa, B ; \beta, \rho, L) \right)<\delta^{-\CstLemProbaA}L\theta e^{\CstLemProbaA B} + \CstLemProbaA\exp\left(-\cstLemProbaA/\delta\right).\]
					\label{lem:proba-A}
				\end{lem}
				\begin{proof}
					We follow Deroin-Kleptsyn \cite[end of proof of lemma 5.2]{deroinkleptsyn}. Let $N \geq 0$ be an integer and $I$ an integer with $I < K_N$, where $K_N$ is as usual the superior integer part of $\ell_N := \kappa\beta\delta N + B$. Consider $X_{N,I}$ the set of paths $\gamma \in \mathscr{A}_{p}(\kappa, B ; \beta, \rho, L)$  such that the condition \ref{item:Aun} in definition \ref{def:A} is satisfied for $n \leq N$ and $i \leq I$.
					
					Then, using the notations of definition \ref{def:A}, the measure $\mathscr{W}'_{N,I}$ which is the pushforward of $\restr{\tilde{W}_{\tilde{p}}}{X_{N, I + 1}}$ by the map $\gamma \mapsto (p_{n,i})_{n \leq N, i \leq I}$ is absolutely continuous with respect to the measure $\mathscr{W}_{N,I}$ which is the pushforward of $\restr{\tilde{W}_{\tilde{p}}}{X_{N,I}}$ by the map $\gamma \mapsto (p_{n,i})_{n \leq N, i \leq I}$, and the density is \begin{align*}(p_{n,i}) \mapsto &\nu_{p_{N,I}}^{\delta/K_N}(E^{\delta/K_N}(p_{N,I}, q_{N,I}, R)) \\
					&\leq \CstBLemComparaison \left(\frac{\delta}{K_N}\right)^{-\CstBLemComparaison}\left(\exp\left(\CstBLemComparaison\ell_Ne^{\CstBLemComparaison R}\right)\dist(p_{N,I}, q_{N,I})  + \exp\left(-\cstLemComparaison K_N\frac{R^2}{\delta}\right)\right)^{1/2}\\
					&\leq C \left(\frac{\delta}{K_N}\right)^{-\CstBLemComparaison}\left(\exp\left(\CstBLemComparaison\ell_Ne^{\CstBLemComparaison R}\right)L\theta e^{-\beta\delta N}  + \exp\left(-\cstLemComparaison K_N\frac{R^2}{\delta}\right)\right)^{1/2},\end{align*} where $q_{N,I}$ is the image of $q$ by the holonomy along $\gamma$ from $p$ to $p_{N,I}$.
					
					Order the set $\mathscr{E}$ of pairs $(N,I)$ with $0 \leq I < K_N$ in lexicographic order, that is in such a way that the function $(N, I) \mapsto t_{N,I}$ (see definition \ref{def:A}) is increasing. Let $(c_m)_{m \in \N}$ be an enumeration of $\mathscr{E}$ in increasing order. Then we have $\mathsf{A}^{\delta}_{pq}(\kappa, B ; \beta, \rho, L) = \bigcap_m \left(X_{c_m}\setminus X_{c_{m + 1}}\right)$ and so we get:
					\begin{align*}
					\tilde{W}_{\tilde{p}}\left(\mathscr{A}_{\tilde{p}}(\kappa, B ; \beta, \rho, L) \setminus \mathsf{A}^{\delta}_{pq}(\kappa, B ; \beta, \rho, L) \right) 
					= \sum_{m} \tilde{W}_{\tilde{p}}\left(X_{c_m}\setminus X_{c_{m + 1}}\right)\\
					\leq \sum_{n = 0}^\infty CK_n \left(\frac{\delta}{K_n}\right)^{-\CstBLemComparaison}\left(\exp\left(\CstBLemComparaison\ell_ne^{\CstBLemComparaison R}\right)L\theta e^{-\beta\delta n }  + \exp\left(-\cstLemComparaison K_n\frac{R^2}{\delta}\right)\right)^{1/2}.
					\label{eq:grosse serie}
					\end{align*}
					We now bound this last sum. First note that \begin{align*}
					K_n^{\CstBLemComparaison + 1}\exp\left(\CstBLemComparaison\ell_ne^{\CstBLemComparaison R}\right) 
					&\leq C(\kappa\beta\delta n + B)^{\CstBLemComparaison + 1}\exp\left(\CstBLemComparaison\kappa\beta\delta e^{\CstBLemComparaison R}n\right)\exp\left(\CstBLemComparaison e^{\CstBLemComparaison R}B\right) \\
					&\leq CB^{C}\exp\left(\left(\CstBLemComparaison + 1 + \CstBLemComparaison e^{\CstBLemComparaison R}\right)\kappa\beta\delta n\right)\exp\left(CB\right)\\
					&\leq \exp\left(CB\right)\exp\left(\frac{\beta\delta}{2} n\right),
					\end{align*}
					since $\kappa \lesssim 1$. Therefore
					\[C K_n\left(\frac{\delta}{K_n}\right)^{-\CstBLemComparaison}\left(\exp\left(\CstBLemComparaison\ell_ne^{\CstBLemComparaison R}\right)e^{-\beta\delta n}\right)^{1/2}
					\leq  \delta^{-C}e^{CB}L\theta \exp\left(-\frac{\beta\delta}{4} n\right).\]
					For the second part, we have
					\begin{align*}
					K_n^{\CstBLemComparaison + 1}\exp\left(-\cstLemComparaison K_n\frac{R^2}{2\delta}\right)&\leq CB^{C}\exp\left(\left(\CstBLemComparaison + 1\right)\kappa\beta\delta n\right)\exp\left(-\cstLemComparaison (\kappa\beta\delta n + B)\frac{R^2}{2\delta}\right)\\
					&\leq C\exp\left(-cB/\delta\right)\exp(-\kappa\beta\delta n)
					\end{align*}
					for $R$ large enough, depending only on the geometry of the foliation. Therefore, summing the series, the right-hand side of equation \ref{eq:serie} is bounded above by $\delta^{-C}L\theta e^{CB} + C\exp\left(-cB/\delta\right)$, which proves the claim.
				\end{proof}
				
				\begin{defn}
					Let $p \in M \setminus S$ and $\tilde{p}$ be a lift in $\tilde{L}_p$. Let $\kappa, \beta \asymp 1$ and $L > 0, B > 1$. Let $\rho \in(0,\rhoSection)$. Let $\theta \in [0, \rho)$ such that $L\theta e^{\CstDefnA B} \lesssim 1$. Let $q \in S(p,\theta)$ and $\tilde{q}$ be a lift in $\tilde{L}_q$.
					
					Define $\mathsf{B}^{\delta}_{\tilde{p}\tilde{q}}(\kappa, B ; \beta, \rho, L)$ to be the set of paths $\omega \in \tilde{\Gamma}_{\tilde{q}}$ such that for every integers $n, m$ with $n \geq m \geq 0$ and $i = 1, \ldots, K_n$, $j = 1, \ldots, K_m$:
					\begin{enumerate}
						\item $q_{n,i}\in \Phi_{q_{n,i-1}p_{n,i-1}}^{-1}\left(E^{\delta/K_n}(p_{n,i-1}, q_{n, i-1}, R)\right)$,\label{def:B-cond}
						\item the holonomy between $p_{m,j}$ and $p_{n,i}$ maps $S(p_{m,j},\rho e^{\kappa\beta t_{m,j}})$ to $S\left(p_{n,i},L\rho e^{-\beta (t_{n,i}-t_{m,j})}\right)$, and moreover, for any $z_0, z_1 \in S(p_{m,j},\rho e^{-\kappa\beta t_{m,j}})$ we have
						\[\frac{\dist\left(h_{p_{m,j}p_{n,i}}(z_0), h_{p_{m,j}p_{n,i}}(z_1)\right)}{\dist(p_{n,i},S)} \leq Le^{-\beta(t_{n,i}-t_{m,j})}e^{\kappa\beta t_{m,j}}\frac{\dist(z_0, z_1)}{\dist(p_{m,j},S)}; \]
						\item $\ell(p_{n,i}) \leq \kappa\beta \delta n + B$.
					\end{enumerate}
					Here we have written $K_n$ for the superior integer part of $\kappa\beta\delta n + B$ for any $n$, $t_{n,i} = n\delta + i\delta/K_n$, $q_{n,i} = \omega(t_{n,i})$, $q_{n,K_n} = q_{n + 1,0}$, $p_{n,i} = \Phi_{q_{n,i-1}p_{n,i-1}}(q_{n,i})$, $p_{n,K_n} = p_{n + 1,0}$, $p_{0,0} = \tilde{p}$. As above $h_{p_{m,j}p_{n,i}}$ is an abuse of notation for the holonomy between the projections of $p_{m,j}$ and $p_{n,i}$ on $L_p$.
					\label{def:B}
				\end{defn}
				\begin{rem}
					\begin{enumerate}
						\item The set $\Phi_{q_{n,i-1}p_{n,i-1}}^{-1}\left(E^{\delta/K_n}(p_{n,i-1}, q_{n, i-1}, R)\right)$ is well-defined for the same reason as in definition \ref{def:A} above.
						\item The set $\mathsf{B}^{\delta}_{\tilde{p}\tilde{q}}(\kappa, B ; \beta, \rho, L)$ is the counterpart of $\mathsf{A}^{\delta}_{\tilde{p}\tilde{q}}(\kappa, B ; \beta, \rho, L)$ when a path is pushed orthogonally and "seen from" paths starting at $\tilde{q}$.
					\end{enumerate}
				\end{rem}

				\begin{lem}There exists constants $\CstLemAbsoluteContinuite, \cstLemAbsoluteContinuite > 0$ such that the following holds:	
					
					Let $p \in M \setminus S$ and $\tilde{p}$ be a lift in $\tilde{L}_p$. Let $\kappa, \beta \asymp 1$ and $L > 0, B > 1$. Let $\rho \in(0,\rhoSection)$. Let $\theta \in [0, \rho)$ such that $L\theta e^{\CstDefnA B} \lesssim 1$. Let $q \in S(p,\theta)$ and $\tilde{p}$ be a lift in $\tilde{L}_p$.
					
					Consider $\mu_A$ the image measure of $\restr{\tilde{W}_{\tilde{p}}}{\mathsf{A}^{\delta}_{\tilde{p}\tilde{q}}(\kappa, B ; \beta, \rho, L)}$ by the map $\gamma \mapsto (p_{n,i})_{n\geq 0,0\leq i < K_n}$, in the notation of the definition \ref{def:A}. Similarly consider $\mu_B$ the image measure of $\restr{\tilde{W}_{\tilde{q}}}{\mathsf{B}^{\delta}_{\tilde{p}\tilde{q}}(\kappa, B ; \beta, \rho, L)}$ by the map $\gamma \mapsto (p_{n,i})_{n \geq 0,0\leq i < K_n}$, in the notation of the definition \ref{def:B}.
					
					Then the measure $\mu_A$ is absolutely continuous with respect to $\mu_B$. Moreover, if we write 
					\[\eta := \delta^{-\CstLemAbsoluteContinuite}L\theta e^{\CstLemAbsoluteContinuite B} + \CstLemAbsoluteContinuite\exp\left(-\cstLemAbsoluteContinuite B/\delta\right),\] the density of $\mu_A$ with respect to $\mu_B$ has values in $[1 -\eta, 1 + \eta]$.
					
					In particular, we have $\tilde{W}_{\tilde{q}}\left(\mathsf{B}^{\delta}_{\tilde{p}\tilde{q}}(\kappa, B ; \beta, \rho, L)\right) \geq \tilde{W}_{\tilde{p}}\left(\mathsf{A}^{\delta}_{\tilde{p}\tilde{q}}(\kappa, B ; \beta, \rho, L)\right) - \eta$.
					\label{lem:absolue-continuite}
				\end{lem}
				\begin{proof}
					Let $N \in \N$. Consider $\mu_A^N$ the image measure of $\restr{\tilde{W}_{\tilde{p}}}{\mathsf{A}^{\delta}_{\tilde{p}\tilde{q}}(\kappa, B ; \beta, \rho, L)}$ by the map $\gamma \mapsto (p_{n,i})_{0\leq n\leq N,0\leq i < K_n}$, in the notation of the definition \ref{def:A}. Similarly consider $\mu_B^N$ the image measure of $\restr{\tilde{W}_{\tilde{q}}}{\mathsf{B}^{\delta}_{pq}(\kappa, B ; \beta, \rho, L)}$ by the map $\gamma \mapsto (p_{n,i})_{0\leq n\leq N,0\leq i < K_n}$, in the notation of the definition \ref{def:B}.
					For a path in $\mathsf{A}^{\delta}_{\tilde{p}\tilde{q}}(\kappa, B ; \beta,  \rho, L)$, we have $q_{n,i} \in B_g(q_{n,i-1},2R)$, so by proposition \ref{lem:comparaison}, the measure $\mu_A^N$ is absolutely continuous with respect to $\mu_B^N$, and the density $\rho_N$ can be written as:
					\begin{equation}
					\rho_N := \prod_{n = 0}^{N}\prod_{i = 0}^{K_n-1}\frac{\mathrm{d}\restr{\nu_{p_{n,i}}^\delta}{B_g(p_{n,i}, R)}}{\mathrm{d}\left(\left(\Phi_{p_{n,i}q_{n,i}}\right)_{*}\restr{\nu_{q_{n,i}}^\delta}{B_g(q_{n,i}, 2R)}\right)}(p_{n,i}). 
					\label{eq:densite-prod}\end{equation}
					By lemma \ref{lem:comparaison}, we have
					\begin{equation*}
					\abs{\log \rho_N} \leq  \sum_{n = 0}^N C K_n\left(\frac{\delta}{K_n}\right)^{-\CstBLemComparaison}\left(\exp\left(\CstBLemComparaison\ell_ne^{\CstBLemComparaison R}\right)\sup_i \dist(p_{n,i}, q_{n,i})  + \exp\left(-\cstLemComparaison K_n\frac{R^2}{\delta}\right)\right)^{1/2},
					\label{eq:serie}
					\end{equation*}
					where $\ell_n = \kappa\beta\delta n + B$ and $K_n$ is the superior integer part of $\ell_n$, for every $n \in \N$. This sum can be bounded in the exact same way as in the proof of lemma \ref{lem:proba-A}, by $\delta^{-C}L\theta e^{CB} + C\exp\left(-cB/\delta\right)$.

					Therefore, by a simple limiting argument, the measure $\mu_A$ is absolutely continuous with respect to $\mu_B$ and the density is the limit as $N \longrightarrow \infty$ of $\rho_N$.
				\end{proof}
				
				\begin{coro}
					Using the notations of lemma \ref{lem:absolue-continuite}, assume that $\eta < 1$. Then the measure $\mu_B$ is absolutely continuous with respect to $\mu_A$. In particular, if $p$ is such that for $W_p$-almost every $\gamma$, we have \[\frac{1}{n}\sum_{j = 0}^{n - 1} \delta_{\gamma(j\delta)} \rightharpoonup \nu,\]
					then for $\tilde{W}_{\tilde{q}}$-almost every $\omega \in \mathsf{B}^{\delta}_{pq}(\kappa, B ; \beta, \rho, L)$, we have
					\[\frac{1}{n}\sum_{j = 0}^{n - 1} \delta_{\pi_p(p_{j,0})} \rightharpoonup \nu,\]
					where we use the notation of definition \ref{def:B}.
					\label{coro:genericite}
				\end{coro}
				\begin{lem}There exists a constant $\CstLemRedDelta > 0$ such that the following holds:	
					
					Let $p \in M \setminus S$ and $\tilde{p}$ a lift of $p$ in $\tilde{L}_p$. Let $\kappa, \beta \asymp 1$ and $L > 0, B > 1$. Let $\rho \in(0,\rhoSection)$. Let $\theta_0 \in [0, \rho)$ such that $L\theta_0e^{\CstDefnA B} \lesssim 1$. Let $q \in S(p,\theta_0)$ and $\tilde{q}$ a lift of $q$ in $\tilde{L}_q$.
					
					Then for $\tilde{W}_{\tilde{q}}$-almost every $\omega \in \mathsf{B}^{\delta}_{\tilde{p}\tilde{q}}(\kappa, B ; \beta,\rho, L)$, we have $\omega \in \mathscr{A}_{\tilde{q}}(\CstLemRedDelta\kappa; \beta/2)$.
					\label{lem:dediscretisation}
				\end{lem}
				\begin{proof}Let $\omega \in \mathsf{B}^{\delta}_{\tilde{p}\tilde{q}}(\kappa, B ; \beta,\rho, L)$.
					By the Borel-Cantelli lemma and lemma \ref{lem:saut-brownien}, $\tilde{W}_{\tilde{q}}$-almost surely, there exists $n_0 \geq 0$ such that for any integers $n \geq n_0$ and $i$ with $1 \leq i \leq K_n$, for every $s \in[t_{n,i-1},t_{n,i}]$ we have $\dist_g(\omega(t_{n,i-1}), \omega(s)) \leq R$. 
					
					Assume that $n \geq n_0$ and $0 \leq i < K_n$. We can inductively define the continuous path $\gamma $ on $[n_0\delta, \infty)$ which coincides with $\Phi_{q_{n,i-1}p_{n,i-1}} \circ \omega$ on $[t_{n,i-1},t_{n,i}]$. The projection is well-defined for the same reason as above (see the remark below definition \ref{def:A}). 
					
					Now for any $t \in[t_{n,i-1},t_{n,i}]$, because the projection is $2$-Lipschitz, we have $\dist_g(\gamma(t_{n,i-1}), \gamma(t)) \leq 2R$. Therefore, \[\ell(\gamma(t)) \leq C\ell(p_{n,i}) \leq C\kappa\beta\delta n + CB \leq C\kappa\beta t + CB.\] 

					Let now $t \geq s \geq 0$, with $t \in[t_{n,i-1},t_{n,i}]$ where $n,i$ are as above. We assume that $t \geq n_0\delta$ and $s \in [n_0\delta,t]$ and let $m,j$ be integers as above such that $s \in [t_{m,j-1}, t_{m,j}]$. Let $\rho' > 0$ to be chosen later and $z_0, z_1 \in S(\gamma(s),\rho')$ and denote by $z_0(\tau), z_1(\tau)$ the respective images of $z_0, z_1$ by the holonomy along (the projection in $L_p$ of) $\gamma$ from times $s$ to $\tau$, for any real number $\tau \geq s$. Let \[\rho'(\tau) := \max\left\{ \frac{\dist(z_0(\tau),\gamma(\tau))}{\dist(\gamma(\tau),S)}, \frac{\dist(z_1(\tau),\gamma(\tau))}{\dist(\gamma(\tau),S)}\right\},\]\[\theta(\tau) := \frac{\dist(z_0(\tau),z_1(\tau))}{\dist(\gamma(\tau),S)}.\]
					Then assuming that $\rho' \leq L^{-1}\rho\exp\left(-C\kappa\beta s-CB\right)$, we have 
					\begin{align*}
					C\exp\left(C\ell(\gamma(t_{m,j}))e^{2CR}\right)\rho'
					&\leq e^{CB}\exp\left(C\kappa\beta s\right)\rho'\\
					&\lesssim 1,
					\end{align*}and thus by lemma \ref{lem:controle-hol2},\begin{align*}
					\theta(t_{m,j})
					&\leq C\exp\left(C\ell(\gamma(t_{m,j}))e^{2CR}\right)\theta(s)\\
					&\leq e^{CB}\exp\left(C\kappa\beta s\right)\theta(s),
					\end{align*}
					\begin{align*}
					\rho'(t_{m,j})
					\leq e^{CB}\exp\left(C\kappa\beta s\right)\rho'.
					\end{align*}
					Because of the constraint on $\rho'$, changing the constant $C$ above if necessary, we have therefore $\rho'_a(t_{m,j}) \leq \rho e^{-\kappa\beta t_{m,j}}$ and so
					\begin{align*}
					\rho'_a(t_{n,i-1})
					&\leq Le^{-\beta(t_{n,i-1}-t_{m,j})}e^{\kappa\beta t_{m,j}}\rho'_a(t_{m,j})\\
					&\leq Le^{CB}Le^{-\beta(t-s)}e^{C\kappa\beta s}\rho'\\
					&\leq Le^{CB+C\kappa\beta n_0\delta}e^{-\beta(t-s)}e^{C\kappa\beta(s-n_0\delta)}\rho',
					\end{align*}
					\begin{align*}
					\theta(t_{n,i-1})
					&\leq Le^{CB+C\kappa\beta n_0\delta}e^{-\beta(t-s)}e^{C\kappa\beta(s-n_0\delta)}\theta(s).\\
					\end{align*}
					Since $\rho' \leq L^{-1}\rho\exp\left(-C\kappa\beta s-CB\right)$, we have 
					\begin{align*}C\exp\left(C\ell(\gamma(t_{n,i-1}))e^{2CR}\right)\rho'(t_{n,i-1})
					&\leq Le^{CB+C\kappa\beta n_0\delta}e^{-\beta(1 - C\kappa)(t-s)}e^{C\kappa\beta s}\rho'(s)\\
					&\lesssim 1.
					\end{align*}
					Therefore we can apply lemma \ref{lem:controle-hol2} again:
					\begin{align*}\theta(t)
					&\leq C\exp\left(C\ell(\gamma(t_{n,i-1}))e^{2CR}\right)\theta(t_{n,i-1})\\
					&\leq Le^{CB+C\kappa\beta n_0\delta}e^{-\beta(1 - C\kappa)(t-s)}e^{C\kappa\beta s}\theta(s),
					\end{align*}
					\begin{align*}\rho'(t) \leq Le^{CB+C\kappa\beta n_0\delta}e^{-\beta(1 - C\kappa)(t-s)}e^{C\kappa\beta s}\rho'(s).\end{align*}
					We assume, as we may, that $C\kappa < 1/2$. Then writing \[\kappa' := C\kappa,\] \[B' := CB,\] 
					\[\beta' : = (1-C\kappa)\beta > \beta/2,\]
					\[\rho' := L^{-1}e^{-CB-C\kappa\beta n_0\delta}\rho,\]
					\[L' := Le^{CB+C\kappa\beta n_0\delta},\]
					we have $\sigma_{n_0\delta}(\gamma) \in \mathscr{A}_{\gamma(n_0\delta)}(\kappa', B' ; \beta', \rho', L')$.
					
					For $n_1$ large enough, we have
					\begin{align*}
					\frac{\dist(\omega(n_1\delta)),\gamma(n_1\delta))}{\dist(\gamma(n_1\delta), S)} 
					&\leq Le^{-\beta n_1\delta}\theta_0\\
					&\leq \rho'/C.
					\end{align*}
					Therefore, by lemma \ref{lem:changer-transversale}, we can deduce that $\sigma_{n_1\delta}(\omega) \in \mathscr{A}_{\omega(n_1\delta)}(\kappa', \beta')$ if $n_1$ is large enough, and therefore using lemma \ref{lem:controle-hol2} again we deduce that $\omega \in \mathscr{A}_{\tilde{q}}(\kappa', \beta')$.

				\end{proof}
				\begin{lem}There exists a constant $\CstLemRedDelta > 0$ such that the following holds:	
					
					Let $p \in M \setminus S$. Let $\kappa, \beta \asymp 1$ and $L > 0, B > 1$. Let $\rho \in(0,\rhoSection)$. Let $\theta > 0$ be such that $L\theta e^{\CstLemRedDelta B} \lesssim 1$ and $\theta < \rho$ and let $q \in S(p,\theta)$.
					
					Assume that for all $\delta \in (0,1) \cap \Q$, we have for $W_p$-almost every path $\gamma$,
					\begin{equation}\frac{1}{n}\sum_{j = 0}^{n - 1} \delta_{\gamma(j\delta)} \rightharpoonup \nu.
					\label{eq:gen1}\end{equation}
					
					Let $\delta' \in (0,1)\cap\Q$. Then for $\tilde{W}_{\tilde{q}}$-almost every $\omega \in \mathsf{B}^{\delta}_{pq}(\kappa, B ; \beta,\rho, L)$, there exists $M_0 > 0$ and a sequence $(p_n)$ of points in $\tilde{L}_p$ such that $\dist(\omega(n\delta'), p_n) \leq M_0e^{-\beta\delta n/2}$, $\ell(p_n) \leq \CstLemRedDelta\kappa\beta\delta n + M_0$ and moreover
					\begin{equation}\frac{1}{n}\sum_{j = 0}^{n - 1} \delta_{\pi_p(p_j)} \rightharpoonup \nu.\label{eq:distribution-p}\end{equation}
					\label{lem:red-delta}
				\end{lem}
				\begin{proof}
					Note that condition \ref{eq:gen1} is a tail-type property which hold $W_p$-almost surely, so they hold $W_{p'}$-almost surely for every $p' \in L_p$ (see lemma \ref{lem:queue}). 
					
					Using the constructions and notations of the proof of lemma \ref{lem:dediscretisation}, the proof follows by an argument very close to that of lemma \ref{lem:proba-A}, using the Borel-Cantelli lemma to ensure that all but a finite number of the conditions \ref{def:B-cond} are satisfied (for a discretisation step $\delta'$), if $\kappa \lesssim 1$. Moreover, using corollary \ref{coro:genericite}, we can ensure that equation \ref{eq:distribution-p} is satisfied, since writing \[\theta_0' := \frac{\dist(\omega(n_1\delta)),\gamma(n_1\delta))}{\dist(\gamma(n_1\delta), S)},\] we have
					$\left(\delta'\right)^{-C}L'\theta_0'e^{CB'} + Ce^{-cB'/\delta'} < 1/2$ providing $n_0$ is large enough and adjusting the constants if necessary.
				\end{proof}
				Using lemma \ref{lem:saut-brownien}, we can prove the following lemma (see also Deroin-Kleptsyn \cite[subsection 5.1]{deroinkleptsyn} for the idea of proof):
				\begin{lem}
					Let $\eps > 0$. Let $r > 0$. Let $q \in M\setminus S$ and $\tilde{q}$ a lift of $q$ in $\tilde{L}_q$. Then for every $\delta > 0$ small enough, $\tilde{W}_{\tilde{q}}$-almost every path $\omega \in \tilde{\Gamma}_{\tilde{q}}$ is such that the upper density of the set
					\[\left\{n \in \N\,:\,\diam\left(\gamma([n\delta, (n + 1)\delta])\right) > r\right\}\]
					is smaller than $\eps$.
					\label{lem:densite-sauts}
				\end{lem}
				
				Now we can state and prove the main proposition of this section:
				\begin{prop}Let $\kappa,\beta \asymp 1$
					Let $\eta > 0$.
					Let $p \in P \setminus S$ be a a point such that for $W_p$-almost every path $\gamma \in \Gamma_p$, we have \[\frac{1}{n}\sum_{j = 0}^{n - 1} \delta_{\gamma(j\delta)} \rightharpoonup \nu\] 
					for all $\delta \in (0,1) \cap \Q$. Let $\tilde{p}$ be a lift of $p$ in $\tilde{L}_p$. Then there exists an open neighbourhood $U$ of $p$ such that the following holds, for $q\in U$, $\tilde{q}$ a lift of $q$ in $\tilde{L}_q$:
					\[\tilde{W}_{\tilde{q}}\left\{\omega \in \tilde{\Gamma}_{\tilde{q}}\,:\,\frac{1}{T}\left(\pi_q \circ \omega\right)_{*}\mathrm{Leb}_{[0,T]} \underset{T\to\infty}{\rightharpoonup} \nu\right\} \geq W_p(\mathscr{A}_p(\kappa,\beta)) - \eta.\]
					
					\label{lem:similarite}
				\end{prop}
				\begin{rem}
					Here the convergence is vague convergence on $M$. We will see later that using Nguyên's theorem (theorem \ref{thm:nguyen}), this can be improved to vague convergence in $M\setminus S$.
				\end{rem}
				By the Birkhoff ergodic theorem, Nguyên's theorem, and Fornæss-Sibony's theorem (theorem \ref{thm:fornaess-sibony}), and proposition \ref{prop:contraction}, we deduce:
				\begin{coro}
					Let $\eta > 0$.
					Let $p \in P \setminus S$ be a $\nu$-generic point. There exists an open neighbourhood $U$ of $p$ such that the following holds, for $q\in U$, $\tilde{q}$ a lift of $q$ in $\tilde{L}_q$:
					\[\tilde{W}_{\tilde{q}}\left\{\omega \in \tilde{\Gamma}_{\tilde{q}}\,:\,\frac{1}{T}\left(\pi_q \circ \omega\right)_{*}\mathrm{Leb}_{[0,T]} \underset{T\to\infty}{\rightharpoonup} \nu\right\} \geq 1 - \eta.\]
					
					\label{coro:similarite}
				\end{coro}
				\begin{proof}[Proof of proposition \ref{lem:similarite}]
					
					Choosing $B$ and $L$ large enough, and $\rho > 0$ small enough, we can assume that $\tilde{W}_{\tilde{p}}\left(\mathscr{A}_p(\kappa, B;\beta, \rho, L)\right) > \tilde{W}_{\tilde{p}}\left(\mathscr{A}_{\tilde{p}}(\kappa,\beta)\right) - \eta$. By applying lemma \ref{lem:queue-A} and further increasing $L$ and $B$, and reducing $\rho$, we have this property replacing $\tilde{p}$ by $p'$ in a small neighborhood $V$ of $\tilde{p}$ in $\tilde{L}_p$.
					
					Choose $\delta \in (0,1) \cap \Q$ fixed, for instance $\delta = 1/2$. If $B$ is large enough, which we can assume, by lemma \ref{lem:proba-A}, that there is a neighborhood $U$ of $p$ in $M$ such that if $q \in U$, there is $p' \in V$ (obtained by projecting on the leaf of $p$ and with lift $\tilde{p}'$) with $\tilde{W}_{\tilde{p}'}\left(\mathsf{A}^{\delta}_{\tilde{p}'\tilde{q}}(\kappa, B ; \beta, \rho,L)\right) > \tilde{W}_{\tilde{p}}\left(\mathscr{A}_{\tilde{p}}(\kappa, B;\beta, \rho, L)\right) - \eta > \tilde{W}_{\tilde{p}}\left(\mathscr{A}_{\tilde{p}}(\kappa, B;\beta, \rho, L)\right) - 2\eta$.	By lemma \ref{lem:absolue-continuite}, reducing $U$ if necessary, we also have if $q \in U$, $\tilde{W}_{\tilde{q}}\left(\mathsf{B}^{\delta}_{\tilde{p}\tilde{q}}(\kappa, B ; \beta, \rho, L)\right) \geq \tilde{W}_{\tilde{p}}\left(\mathsf{A}^{\delta}_{\tilde{p}\tilde{q}}(\kappa, B ; \beta, \rho, L)\right) - \eta > \tilde{W}_{\tilde{p}}\left(\mathscr{A}_{\tilde{p}}(\kappa,\beta)\right) - 3\eta$. We are going to prove that the paths of $\mathsf{B}^{\delta}_{\tilde{p}\tilde{q}}(\kappa, B ; \beta, \rho, L)$ are distributed according to $\mu$, which will establish the result.
					
					Let $f \in \mathrm{C}^0(M)$. Let $\eps > 0$. Choose $r > 0$ small enough such that for every $x,y$ with $\dist(x,y) < r$, we have $\abs{f(x) - f(y)} < \eps$. Choose $\delta' \in (0,1)\cap\Q$ small enough as per lemma \ref{lem:densite-sauts} with $r$ and $\eps/(2\norm{f}_\infty)$. 
					
					
					Now let $q \in U$, with lift $\tilde{q}$. By lemma \ref{lem:red-delta}, for $\tilde{W}_{\tilde{q}}$-almost every path $\omega \in \mathsf{B}^{\delta}_{\tilde{p}\tilde{q}}(\kappa, B ; \beta, \rho, L)$ there exists $M > 0$ and a sequence $(p_j) \in \tilde{L_p}$ such that $\dist(\omega(j\delta'), p_j) \leq Me^{-\beta \delta j/2}$ and equation \ref{eq:distribution-p} is satisfied. 
					Now we write:
					\begin{align*}&\abs{\frac{1}{n\delta'}\int_0^{n\delta'} f(\pi_q \circ \omega(t))\deriv t - \frac{1}{n}\sum_{j = 0}^{n - 1} f(\pi_q \circ \omega(j\delta)))} \\
					&\leq \frac{1}{n}\sum_{j = 0}^{n - 1}\abs{\frac{1}{\delta'} \int_{j\delta'}^{(j + 1)\delta'} f(\pi_q \circ \omega(t))\deriv t - f(\pi_q \circ \omega(j\delta'))}\\
					&\leq \eps + \frac{2\norm{f}_\infty}{n}\abs{\{j\in\{0,\ldots,n-1\}\,:\, \mathrm{diam}(\omega([j\delta', (j+1)\delta']) \geq r ) \}}
					\end{align*}Therefore:
					\[\limsup_{n\to\infty} \abs{\frac{1}{n\delta'}\int_0^{n\delta'} f(\omega(t))\deriv t - \frac{1}{n}\sum_{j = 0}^{n - 1} f(\omega(j\delta)))} \leq 2\eps.\]
					Because of equation \ref{eq:distribution-p}, we have for $n$ large enough
					\begin{align*}\abs{\frac{1}{n}\sum_{j = 0}^{n - 1} f(\pi_q \circ \omega(j\delta))) - \int f\deriv\nu} < \eps. \end{align*}
					Therefore we have for $\tilde{W}_{\tilde{q}}$-almost every path $\omega \in \mathsf{B}^{\delta}_{\tilde{p}\tilde{q}}(\kappa, B ; \beta, \rho, L)$:
					\begin{align*}\limsup_{n \rightarrow\infty} \abs{\frac{1}{n\delta'}\int_0^{n\delta'} f(\pi_q \circ \omega(t))\deriv t - \int f \deriv \nu} < 3\eps,
					\end{align*}
					Because $\eps$ is arbitrary, we have for $W_q$-almost every path $\omega \in \mathsf{B}^{\delta}_{pq}(\kappa, B ; \beta, L)$:
					\begin{align*}\limsup_{n \rightarrow\infty} \abs{\frac{1}{n\delta'}\int_0^{n\delta'} f(\pi_q \circ \omega(t))\deriv t - \int f \deriv \nu} = 0,
					\end{align*}
					and therefore
					\begin{align*}\limsup_{T \rightarrow\infty} \abs{\frac{1}{T}\int_0^{T} f(\pi_q \circ \omega(t))\deriv t - \int f \deriv \nu} = 0.
					\end{align*}
					This implies, by letting $f$ range through a countable dense set of functions, that for $W_q$-almost every path $\omega \in \mathsf{B}^{\delta}_{pq}(\kappa, B ; \beta, L)$, $\frac{1}{T}\omega_{*}\mathrm{Leb}_{[0,T]} \rightharpoonup \nu$ as $T \longrightarrow \infty$.
				\end{proof}
				\section{Conclusion of the proof}
				\label{sect:conclusion}
				\subsection{Proof of theorem \ref{lethm}}
				Corollary \ref{coro:genericite} establishes a result on distribution with respect to $\nu$ (with exponential contraction, as the proof shows) for a neighborhood of a $\nu$-generic point. To conclude the proof, we extend this to \emph{every} point in $P\setminus S$, and then to every point in $M\setminus S$. For this we use a reccurence argument (see Deroin-Kleptsyn \cite[proof of corollary 2.5]{deroinkleptsyn}). 
				\begin{lem}
					Let $q \in M\setminus S$. Then for $W_q$-almost every path $\gamma \in \Gamma_q$, $\gamma$ accumulates at infinity to a point which is not in $S$. 
					\label{lem:non-acc-sing}
				\end{lem}
				\begin{proof}
					It suffices to show that for every singular point $p_0 \in S$, there is a neighbourhood $U$ of $p_0$ such that whenever $q \in U$, $W_q$-almost surely, $\omega(t)$ does not converge to $q_0$ as $t \to \infty$. Choose $U$ small enough so that the foliation is defined by a holomorphic vector field $X$ which vanishes only at $q_0$.
					We distinguish between two cases: whether the singularity of $X$ at $q_0$ belongs to the Siegel domain or to the Poincaré domain, that is, whether the convex hull of the eigenvalues of $\deriv_{q_0}X$ contains zero (see for instance \cite{CamachoKuiperPalis}).
					
					First assume that we are in the case of the Poincaré domain. In this case, it is possible to linearise the vector field, that is, we can assume, reducing $U$ if necessary, that in local holomorphic coordinates $(x,y,z)$, with $q_0$ having coordinates $(0,0,0)$, the vector field $X$ is given by:
					\[X = \alpha x\partial_x + \beta y\partial_y + \gamma z\partial_z,\]
					where $\alpha, \beta, \gamma \in \C$ and zero does not belong to the convex hull of $\alpha, \beta, \gamma$. The local flow of $X$ at some point $q$ with coordinates $(x, y,z)$ is given by
					\[\zeta \mapsto \left(xe^{\alpha\zeta}, ye^{\beta\zeta}, ze^{\gamma\zeta}\right).\]
					Further reducing the neighbourhood $U$ if necessary, we can assume that $U = \{(x,y,z)\,:\,\abs{x},\abs{y},\abs{z}<1\}$. Then the flow can be defined on the domain \[\{\zeta \in \C\,:\,\mathrm{Re}(\alpha\zeta) < -\log\abs{x}, \mathrm{Re}(\beta\zeta) < -\log\abs{y}, \mathrm{Re}(\gamma\zeta) <  -\log\abs{z}\},\]
					and this gives a parametrisation of $L_q \cap U$.
					But almost surely, a Brownian path for the hyperbolic metric (or for the euclidean metric) reaches the boundary of this domain in finite time. By conformal invariance (see also below subsection \ref{subsect:simi-poincare}), this is also true for the Poincaré-type metric. Therefore, almost every Brownian path starting in $L_q \cap U$ leaves $U$ in finite time. 
					
					We now treat the case of the Siegel domain. In this case it is in general not possible to linearise holomorphically a vector field, however there are still some weaker normal forms which are sufficient for our purpose. Indeed, by a theorem of Camacho-Kuiper-Palis \cite[lemma 7]{CamachoKuiperPalis}, it is possible to choose holomorphic coordinates $(x,y,z)$ such that, reducing $U$ if necessary, we have
					\[X = \alpha x\partial_x + \beta y\partial_y + \gamma z\partial_z + xyzR,\]
					where $R$ is a holomorphic vector field, and $\alpha, \beta, \gamma \in \C$, no two of which are colinear over $\R$, and $0$ belongs to the convex hull of $\alpha, \beta, \gamma$. In this case, for every $q$ with coordinates $(x,y,z)$, if none of $x$,$y$ or $z$ is zero, the leaf $L_q$ passing through $q$ in $U$ remains at a postive distance from $0$, as is shown in the aforementioned work of Camacho-Kuiper-Palis \cite{CamachoKuiperPalis} or can be seen directly using standard estimates on solutions of ordinary differential equations. Therefore, it suffices to treat the case where one of the coordinates is zero. But in this case, the vector field is linear in the plane defined by the vanishing of that coordinate, and we can use the same argument as above.
				\end{proof}
				
				\begin{lem}
					Let $\delta_0 \in (0,1) \cap \Q$.
					Let $p \in \mathrm{supp}\,\nu$ and $U$ a neighbourhood of $p$ in $M\setminus S$. Then for every $q \in P\setminus S$, for $W_q$-almost every path $\gamma \in \Gamma_q$, there exist infinitely many $n \in \N$ such that $\gamma(n\delta_0) \in U$.
					\label{lem:recurrence1}
				\end{lem}
				\begin{proof}
					The closure of the leaf $L_q$ passing through $q$ supports a positive directed harmonic current by theorem 1.4 of Berndtsson-Sibony \cite{BerndtssonSibony}. Because such a current is unique in $P$, we must have $ \mathrm{supp}\,\nu \subset \overline{L_q} $. Therefore $L_q \cap U \neq\varnothing$. 
					By lemma \ref{lem:non-acc-sing}, for $W_q$-almost every path $\gamma \in \Gamma_q$, there exists a compact set $K \subset P\setminus S$ such that there exist infinitely many $t \geq 0$ such that $\gamma(t) \in K$. By the continuity of the diffusion operator (see Garnett \cite{Garnett}), there exists $p_0 > 0$ such that for every $q' \in K'$, and every $\tau \in [1/2\delta_0,3/2\delta_0]$, $W_{q'}\left\{\gamma\,:\,\gamma(\tau) \in K\right\} > p_0$. From this and an argument using the strong Markov property, we deduce the result.  
				\end{proof}
				
				\begin{lem}
					Let $p \in P \setminus S$. Then $W_p$-almost every path $\gamma$ is distributed with respect to $\nu$.
					
					In fact, for every $\eta > 0$, there exists a neighbourhood $U$ of $p$ in $M\setminus S$ such that for every $q \in U$, with $W_q$-probability $> 1 -\eta$, $\omega$ is distributed with respect to $\nu$.
					\label{lem:similarite2}
				\end{lem}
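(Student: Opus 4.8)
The plan is to reduce the first assertion to a single \emph{recurrence} statement combined with a tail-event (martingale convergence) argument, and then to obtain the neighbourhood version by re-examining the proof of Lemma~\ref{lem:similarite}.

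Write $h(q) := W_q(\Gamma_q \cap \mathcal{G}_\nu)$ for $q \in M \setminus S$, so that the first assertion is $h \equiv 1$ on $P \setminus S$. First I would record that $h$ is a bounded leafwise-harmonic function. Indeed, membership in $\mathcal{G}_\nu$ depends only on the asymptotics of $\frac1T\gamma_*\mathrm{Leb}_{[0,T]}$, hence is unchanged by the shifts $\sigma_t$; by the Markov property $t \mapsto h(\omega(t))$ is then a bounded $W_q$-martingale with $h(\omega(t)) = W_q(\mathcal{G}_\nu \mid \mathcal{F}_t)$, so $D_t h = h$ for all $t \geq 0$, and since the Poincaré-type metric is complete with bounded geometry this forces $\Delta_g h = 0$ on each leaf. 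Next, by the theorem of Fornæss--Sibony \cite{FornaessSibony}, $h(p) = 1$ for $\nu$-almost every $p$; and since on a fixed leaf the heat kernels $p_g(q,\cdot\,;1)$ are everywhere positive (hence mutually absolutely continuous), the identity $h(q) = \int p_g(q,y;1)\,h(y)\,\mathrm{d}\mathrm{vol}_g(y)$, valid because $\mathcal{G}_\nu$ is $\sigma_t$-invariant, shows that $h \equiv 1$ on the whole leaf of any $\nu$-generic point. Thus $\{h=1\}$ is a union of leaves — a saturated set — containing a saturated set $G$ of full $\nu$-measure, and $G$ meets every non-empty open subset of $\mathrm{supp}(\nu)$.

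The heart of the matter is then to propagate $h=1$ to \emph{all} of $P \setminus S$, and this is where I expect the main obstacle. The structure is: fix $p \in P \setminus S$ and let $L = L_p$. The closure $\overline L \subset P$ is compact and $\mathscr F$-invariant with all its leaves hyperbolic, hence carries a harmonic probability measure (Krylov--Bogoliubov for the leafwise diffusion); by the uniqueness part of Fornæss--Sibony this measure is $\nu$, so $\mathrm{supp}(\nu) \subseteq \overline L$. Consequently $L$ accumulates on $\mathrm{supp}(\nu)$, hence on $\nu$-generic points, near which $h \geq 1-\eta$ for any prescribed $\eta$ by Lemma~\ref{lem:similarite}. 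The point I would then need is that for $W_p$-almost every path $\omega$ the trajectory returns arbitrarily close to $\mathrm{supp}(\nu)$ at arbitrarily large times; granting this, one finds for each $\eta$ times $t_k \to \infty$ with $h(\omega(t_k)) \geq 1-\eta$, so $\limsup_t h(\omega(t)) = 1$, and combined with the martingale convergence $h(\omega(t)) \to \mathbf 1_{\mathcal G_\nu}(\omega)$ (Lévy's $0$--$1$ law) this gives $\mathbf 1_{\mathcal G_\nu}(\omega) = 1$ for $W_p$-a.e.\ $\omega$, i.e.\ $h(p)=1$. The delicate part is exactly the recurrence to $\mathrm{supp}(\nu)$: because Brownian motion on a hyperbolic leaf is transient, a naïve return-to-a-neighbourhood estimate is not available, and one must instead discretise the path and control the asymptotic time spent near the singular set $S$ via Nguyên's theorem~\ref{thm:nguyen}, as in Deroin--Kleptsyn \cite[end of \S5.2.2]{deroinkleptsyn}.

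Finally, the neighbourhood statement follows by re-running the proof of Lemma~\ref{lem:similarite} with an \emph{arbitrary} base point $p \in P \setminus S$ in place of a $\nu$-generic one: the only place where $\nu$-genericity of $p$ was used there was to guarantee $W_p(\Gamma_p \cap \mathcal G_\nu) = 1$, which is precisely the first assertion just established. Hence, for each $\eta > 0$, the same construction yields an open neighbourhood $U$ of $p$ with $W_q(\Gamma_q \cap \mathcal G_\nu) \geq 1-\eta$ for every $q \in U$, which is the claim.
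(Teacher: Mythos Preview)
Your overall architecture is correct and matches the paper's: both arguments hinge on the tail nature of $\mathcal G_\nu$, a recurrence statement to a neighbourhood where Lemma~\ref{lem:similarite} applies, and a $0$--$1$ conclusion. The paper phrases the latter as ``shift and apply the strong Markov property''; your formulation via the leafwise-harmonic function $h$ and L\'evy's $0$--$1$ law is equivalent and arguably cleaner.

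There are, however, two genuine soft spots.

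\emph{Recurrence step.} Invoking Nguy\^en's theorem here is circular. Theorem~\ref{thm:nguyen} is an integrability statement \emph{with respect to $\nu$}; its ergodic-theoretic consequences (control of $\frac1T\int_0^T \ell(\gamma(t))\,dt$) are only available once one knows $p$ is $\nu$-generic, which is exactly what you are trying to establish. The paper argues differently: by uniqueness of the harmonic current on $P$, every regular leaf meets the neighbourhood $U$ of a fixed generic point; hence $D_1\chi_U>0$ everywhere on $P\setminus S$ and is bounded below on compacta (continuity of $D_1$ on continuous functions). A conditional Borel--Cantelli then forces any path that never enters $U$ to have $\{\gamma(n)\}$ accumulate only on $S$, and this last eventuality is ruled out by the unique ergodicity in $P$ (the limiting empirical measure would be a harmonic measure charging $S$, impossible). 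You should replace the Nguy\^en reference by this argument.

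\emph{Neighbourhood version.} Your claim that $\nu$-genericity of the base point entered the proof of Lemma~\ref{lem:similarite} \emph{only} through $W_p(\mathcal G_\nu)=1$ is not accurate: it was also used to obtain $W_p(\mathcal G_\ell(\eps,B))\geq 1-\eta_1$ (via the ergodic theorem applied to Lemma~\ref{lem:integrabilite}) and $W_p(\mathcal G_{\mathrm{hol}}(\rho,L))\geq 1-\eta_1$ (via Proposition~\ref{prop:contraction}, stated for $\nu$-a.e.\ $p$). So re-running Lemma~\ref{lem:similarite} from an arbitrary $p\in P\setminus S$ requires first upgrading these two statements to every $p$. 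This can be done by the very same tailness mechanism you used for $\mathcal G_\nu$---the events $\bigcup_{\rho,L}\mathcal G_{\mathrm{hol}}(\rho,L)$ and $\bigcup_B\mathcal G_\ell(\eps,B)$ are (up to finite shifts and adjustment of constants) tail events---but this extra step must be made explicit; as written, the deduction of the second assertion is incomplete.
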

				\begin{proof}
					Let $\delta_0 = 1/2$ say. Let $p_0$ be a generic point to which corollary \ref{coro:similarite} applies with $\eta = 1/10$, say, and $U$ the associated neighbourhood in $P\setminus S$. Then for every $p \in P\setminus S$, by lemma \ref{lem:recurrence1}, for $W_p$-almost every path $\gamma \in \Gamma(p)$, there exists $N = N(\gamma) \in \N$ such that $\gamma(N\delta_0) \in U$. 
					
					Now for any for any $p' \in U$, we define a stopping time $N'$ on $\Gamma_{p'}$ such that $N' = \infty$ implies that the path is distributed according to $\nu$ and belongs to $\mathscr{A}_{p'}(\kappa,\beta)$ for some $\kappa, \beta\asymp 1$. 
					For this, we consider the proof of lemma \ref{lem:similarite}. The set of paths distributed according to $\nu$ given by this lemma is a set of the form $B^{\delta_0}_{\tilde{p}\tilde{q}}(\kappa, B ; \beta, \rho, L)$ for some $p$ that we can assume belongs to $U$. The condition of belonging to this set is expressed as a countable set of condition indexed by an integer $n$ (see the notations in definition \ref{def:B}), and we can consider the smallest such $n$ such that these conditions do not hold, and let $N'(\gamma) = n$.
					If no such integer exists, we let $N'(\gamma) = \infty$ and then by lemma \ref{lem:dediscretisation}, there exists some $\beta, \kappa\asymp 1$ satisfying the assumption \ref{assumption-kappa} above such that $\gamma \in \mathscr{A}_{\tilde{p}'}(\kappa,\beta)$ and $\gamma$ is distributed with respect to $\nu$.
					
					Now we define the sequence $\tau_0(\gamma) := 0$, $\tau'_1(\gamma) :=0$, and
					\[\tau_{k}(\gamma) := \tau'_{k-1}(\gamma) + N\left(\sigma_{\tau'_{k-1}(\gamma)}(\gamma)\right)\delta_0,\]
					\[\tau'_k(\gamma) := \tau_k(\gamma) + N'\left(\sigma_{\tau_k(\gamma)}(\gamma)\right)\delta_0.\]
					Then we have $\tau_0 \leq \tau_0' \leq \tau_1 < \tau'_1 \leq \tau_2 < \tau'_2 \leq \tau_3 < \tau'_3 \leq \ldots$ and morover, if some $\tau_k$ is infinite, then there exists $n\in\N$ such that $\sigma_{n\delta_0}(\gamma)$ is distributed according to $\nu$ and belongs to $\mathscr{A}_{p'}(\kappa,\beta)$ for some $\kappa, \beta\asymp 1$. 
					Let $k \in \N$, $k \geq 1$. By $2k$ applications of the strong Markov property, we have
					\[W_{p}(\tau'_k < \infty) = W_p\left\{\tau_1, \tau'_1, \ldots, \tau_k, \tau'_k < \infty\right\} \leq \eta^{k}.\]
					Therefore, almost surely there exists $k \in \N$ such that $\tau'_k = \infty$. Because the distribution of a path is shift-invariant, the first part the result is proved. Applying lemmas \ref{lem:dediscretisation} and \ref{lem:red-delta}, we deduce moreover that for every $p \in P\setminus S$ satisfies the conditions of proposition \ref{lem:similarite}, and from this we deduce the second part of the statement.
				\end{proof}
				To conclude the proof, we need the following lemma, which can be proved by \textit{topologically} linearising the vector field near the singular points. It is a consequence of (the proofs of) lemmas 7.3 and 7.4 and proposition 7.6 of Loray-Rebelo \cite{LorayRebelo}.
				\begin{lem}
					If $M = \mathbb{P}^3(\C)$, $P$ is a projective plane in $M$, and $\mathscr{F}$ has no invariant algebraic curve, then there exists a compact set $K \subset P \setminus S$ such that for every leaf $L$ of $\mathscr{F}$, $\overline{L} \cap K \neq \varnothing$.
					\label{lem:accumulation}
				\end{lem}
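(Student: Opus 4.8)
The plan is to take for $K$ a large compact subset of $P\setminus S$, say $K=\{p\in P:\dist(p,S)\geq\eps\}$ with $\eps>0$ small, and to reduce the assertion to a local analysis at the hyperbolic singularities of $\mathscr{F}$ lying on $P$ --- which is exactly what Loray--Rebelo's lemmas provide.

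First I would dispose of the leaves contained in $P$. Since $\restr{\mathscr{F}}{P}$ is a foliation of the projective plane $P$ with only hyperbolic singularities and no foliation cycle, it has no invariant algebraic curve; combining the uniqueness of the harmonic current on $P$ (Fornæss--Sibony~\cite{FornaessSibony}) with the existence result of Berndtsson--Sibony~\cite{BS} shows that every leaf of $\restr{\mathscr{F}}{P}$ is dense in $P$ --- this is the fact already used in the proof of Lemma~\ref{lem:similarite2}. In particular the support of the unique harmonic current $T$ on $P$, being closed and saturated, equals $P$, so $\mathrm{supp}\,\nu=P$; and any leaf $L\subset P$ has $\overline{L}=P\supseteq K$. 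Next I would leverage the similarity already proved: for each $p\in P\setminus S$ let $U_p\subset M$ be the neighbourhood of $p$ given by Lemma~\ref{lem:similarite2} (with $\eta=1/2$, say), and set $\mathcal{U}=\bigcup_{p\in P\setminus S}U_p$, an open neighbourhood of $P\setminus S$ in $M$. For every $q\in\mathcal{U}$ there is, with positive probability, a Brownian path issued from $q$ that equidistributes towards $\nu$; such a path remains in the leaf through $q$ and accumulates on $\mathrm{supp}\,\nu=P$, so that leaf has closure $\supseteq P\supseteq K$. It therefore suffices to show that \emph{every leaf $L$ not contained in $P$ meets $\mathcal{U}$.}

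Suppose not. Since $\mathcal{U}\supseteq P\setminus S$, the compact set $M\setminus\mathcal{U}$ lies in $\C^3\cup(S\cap P)$, hence $L\subset\C^3=M\setminus P$ and $\overline{L}\cap P\subseteq S\cap P$ (if $\overline{L}$ contained a regular point $p_1\in P$, the leaf $L$ would meet $U_{p_1}$). Here I would invoke the topological linearization of the hyperbolic singularities on $P$ (Loray--Rebelo~\cite{LorayRebelo}, Lemma~7.3): near each $p_0\in S\cap P$ the pair $(\mathscr{F},P)$ is homeomorphic to the one given by $\alpha x\partial_x+\beta y\partial_y+\gamma z\partial_z$ with invariant plane $\{z=0\}$ and pairwise $\R$-independent eigenvalues. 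The description of the leaves of this linear model off $\{z=0\}$ and of their returns (Loray--Rebelo~\cite{LorayRebelo}, Lemma~7.4) then shows, on the one hand, that a leaf off $P$ lingering near $p_0$ either accumulates on a separatrix of $\restr{\mathscr{F}}{P}$ inside $P\setminus\{p_0\}$ or eventually leaves a fixed neighbourhood of $p_0$; and on the other hand, $\overline{L}$ cannot be a compact invariant subset of $\C^3$ avoiding all singular points, since $\C^3$ is Stein and hence contains no compact leaf, and a cycle of separatrices between hyperbolic singularities is again excluded by the normal form. As there are finitely many singularities and $\overline{L}$ is compact and invariant, these facts force $\overline{L}$ to accumulate on a separatrix of $\restr{\mathscr{F}}{P}$ contained in $P\setminus S$; then, as above, $\overline{L}\supseteq P$ and $L$ meets $\mathcal{U}$, a contradiction.

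The hard part is this local analysis at the hyperbolic singularities on $P$: the topological (not holomorphic, in dimension three) linearization, and above all the extraction from the linear model of the fact that a leaf off $P$ must return towards $P\setminus S$ --- here the hyperbolicity of \emph{all} the singularities is essential, and for this step I would simply cite Loray--Rebelo~\cite{LorayRebelo}. Ruling out a relatively compact leaf of $\C^3$ accumulating on a polycycle of separatrices between hyperbolic singularities is the other delicate point.
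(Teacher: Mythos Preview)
Your proposal is correct and, at its core, coincides with the paper's approach: both ultimately rest on Loray--Rebelo's Lemmas~7.3 and~7.4 (topological linearisation at hyperbolic singularities and the behaviour of leaves in the linear model). The paper's own proof is nothing more than that citation, together with the remark that one topologically linearises near the singular points.

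The detour through Lemma~\ref{lem:similarite2} in your second paragraph is unnecessary, however. You build $\mathcal{U}$ from the similarity neighbourhoods and invoke Brownian equidistribution to conclude that any leaf meeting $\mathcal{U}$ has closure containing $P$. But this follows more directly from saturation and density: if $\overline{L}$ meets $P\setminus S$ at some $p_1$, then, since $\overline{L}$ is closed and saturated, it contains the leaf of $\restr{\mathscr{F}}{P}$ through $p_1$, which you have already argued is dense in $P$; hence $\overline{L}\supseteq P\supseteq K$. So your reduction ``every leaf not in $P$ meets $\mathcal{U}$'' is equivalent to ``$\overline{L}\cap(P\setminus S)\neq\varnothing$ for every leaf $L$'', and your third paragraph (the Loray--Rebelo part) proves exactly this. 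Stripping out the Brownian machinery leaves a purely topological argument --- shorter, and matching the spirit of the paper's one-line proof. Your identification of the two genuinely delicate points (topological, not holomorphic, linearisation in dimension three; and ruling out a polycycle of separatrices) is accurate, and these are precisely what the cited lemmas of Loray--Rebelo supply.
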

				Using this lemma and arguing as in the proof of lemma \ref{lem:similarite2}, we obtain the following statement:
				\begin{prop}
					Let $p \in M\setminus S$. Then $W_p$-almost every path $\gamma$ is distributed with respect to $\nu$. Moreover, $\gamma$ exponentially contracts a small ball in the sense of proposition \ref{prop:contraction}.
					\label{prop:similarite2}
				\end{prop}
				Finally we can prove theorem \ref{lethm}.
				\begin{proof}[Proof of theorem \ref{lethm}]
					Let $m$ be an ergodic harmonic probability measure for $\mathscr{F}$ on $M$. By the random ergodic theorem, for $m$-almost every point $p$, $p$ is regular and a $W_p$-almost surely, a brownian path is distributed with respect to $m$. Fix one of these points $p$. By proposition \ref{lem:similarite2}, $W_p$-almost every brownian path is distributed according to $\nu$. Therefore $m = \nu$, and this concludes the proof by the ergodic decomposition (see Nguyên \cite{NguyenSurvey}) and the correspondence between harmonic currents and harmonic measures.
					
				\end{proof}
				
				Let us also note that by modifying the proofs in section \ref{sect:similarite} using Nguyên's theorem and the ergodic theorem, we can assume that the measures which equidistribute are tight in $M\setminus S$. Indeed along a generic path, the function $\ell$ has a finite average, and therefore by the similarity arguments, this is true for almost every path starting from any fixed choice of point in $M\setminus S$. But for such a path, it is clear that we have tightness of the measures in the statement of lemma \ref{lem:similarite}. Therefore, we can replace vague convergence in $M$ by vague convergence in $M\setminus S$, that is we get that the following:
				\begin{prop}
					For every $q \in M\setminus S$ and every bounded continuous function $\varphi$ on $M\setminus S$, for $W_q$-almost every path $\omega$, we have
					\[\frac{1}{T}\int_0^T\varphi(\omega(t))\deriv t \xrightarrow[T\to\infty]{} \int \varphi\deriv\nu.\]
				\end{prop}
				We will use this in the next subsection to prove results with the Poincaré metric instead of a smooth Poincaré-type metric.
				
				\subsection{Similarity for the Poincaré metric}
				\label{subsect:simi-poincare}
				We now refine the results obtained above for the Wiener process associated to the Laplace-Beltrami operator of the Poincaré-type metric to the Wiener process associated to the Poincaré metric. In dimension $2$, as we have seen, the Laplace-Beltrami operator changes by a multplicative factor under conformal change of metric, and this implies that Brownian motion in invariant under conformal change of metric, up to time reparametrisation.
				
				Write $g_P = h_P^2g$ for some bounded continuous function $h_P$. We denote by $W_p^P$ the Wiener measure associated to the Poincaré metric, and note that by proposition \ref{prop:typepoincare}, we have $h \asymp 1$. For $p \in M \setminus S$, and $\gamma \in \Gamma_p$, consider the function, defined for $t \geq 0$ by
				\[\tau_\gamma(t) := \int_0^t h_P(\gamma(s)))^2\mathrm{d}s.\]
				This function is non-negative, increasing and almost surely diverges at infinity and is continuous, so we can consider the inverse map $\tau_{\gamma}^{-1}$. Then the conformal invariance of Brownian motion implies that the pushforward of $W_p$ by the map $\gamma \mapsto \gamma \circ \tau_\gamma^{-1}$ is $W_p^P$. 
				Note that for $t > 0$, we have $\tau_\gamma(t) \asymp t$. For any continuous path $\gamma$ starting at $p \in M\setminus S$, and $f \in \mathrm{C}_b(M\setminus S)$, we have
				\begin{align*}
				\frac{1}{T}\int_0^Tf\left(\gamma\circ\tau_\gamma^{-1}(t)\right)\deriv t
				&= \frac{1}{T}\int_0^{\tau_\gamma^{-1}(T)} f\left(\gamma(s)\right)h_P^{2}(\gamma(s))\deriv s\\
				&= \frac{\tau_\gamma^{-1}(T)}{T}\frac{1}{\tau_\gamma^{-1}(T)}\int_0^{\tau_\gamma^{-1}(T)} f\left(\gamma(u)\right)h_P^{2}(u)\deriv u.
				\end{align*}
				We note that
				\[\frac{T}{\tau_\gamma^{-1}(T)} = \frac{1}{\tau_\gamma^{-1}(T)}\int_0^{\tau_\gamma^{-1}(T)} h_P(\gamma(s)))^2\mathrm{d}s,\]
				thus almost surely, by proposition \ref{lem:similarite2}, we have
				\[\frac{1}{T}\int_0^Tf\left(\gamma\circ\tau_\gamma^{-1}(t)\right)\deriv t \xrightarrow[T\to\infty]{} \frac{1}{\int h_P^2\deriv\nu}\int fh_P^2\deriv\nu.\]
				Consequentely, recalling that we have denoted $\nu' = T \wedge \mathrm{vol}_{g_P}$, we have proved:
				\begin{prop}
					For every $p \in M\setminus S$, for $W_p^P$-almost every path $\gamma$, $\gamma$ is distributed according to $\nu'/\nu'(M)$.
				\end{prop}
				
				\subsection{Geometric Birkhoff theorem for the Poincaré metric}
				Now consider $\{D_t^P\}$ the diffusion operator associated to the Laplace-Beltrami operator of the Poincaré metric. We have, for every $p \in M\setminus S$, $f \in \mathrm{C}_b(M\setminus S)$ and $T > 0$:
				\[\frac{1}{T}\int_0^T \left(D_t^Pf\right)(p)\deriv t = \int_{\Gamma_p}\left(\frac{1}{T}\int_0^t f(\gamma(s))\deriv s\right)\deriv W_p^P(\gamma),\]
				which therefore converges as $T\to\infty$ to $\frac{1}{\nu'(M)}\int f\deriv\nu'$.
				Now we use the results of \cite[section 7]{DNSHeat}. For any point $p \in M\setminus S$ and $R > 0$ , consider $\phi_p\colon \Disk \to L_p$ a universal covering such that $\phi_p(0) = p$ (unique up to pre-composition by a rotation around $0$), and
				\[r = \frac{e^R-1}{e^R+1},\]
				\[M_R := \int_\Disk \log^{+}\left(\frac{r}{\zeta}\right)\deriv\mathrm{vol}_{P}(\zeta),\] where we have written $\log^{+} = \max\{0,\log\}$ and $\mathrm{vol}_{P}$ is the volume element for the Poincaré metric on $\Disk$. Now we define the measure $m_{p,R}$ on $M\setminus S$ by $m_{p,R} := \frac{1}{M_R}\left(\phi_p\right)_{*}\left(\log^{+}\left(\frac{r}{\zeta}\right)\deriv\mathrm{vol}_P(\zeta)\right)$. 
				\begin{prop}
					The $m_{p,R}$ converge vaguely to $\nu'/\nu'(M)$ as $R \to \infty$.
					\label{prop:moyenne-poincare}
				\end{prop}
				Indeed, the proof of \cite[lemma 7.5]{DNSHeat} implies that for every bounded continuous function $f$, we have
				\[\abs{\int f\deriv m_{p,R} - \frac{2\pi}{M_R}\int_0^{M_R/2\pi}\left(D^P_{t}f\right)(p)\deriv t} \xrightarrow[R\to\infty]{} 0,\]
				which proves the proposition by the discussion above, as $M_R \to \infty$ when $R \to \infty$.
			\bibliographystyle{plain}
			\bibliography{feuilletage}		
\end{document}